\documentclass[12pt,psfig]{article}
\usepackage{amsthm,amssymb,amsmath,amsfonts,amscd}
\usepackage{graphicx,epsfig,latexsym}
\usepackage{cite,calc,xcolor,subfigmat,mathrsfs,dsfont,epstopdf}
\usepackage{hyperref}
\usepackage{cleveref}
\usepackage{chngcntr}
\usepackage{tabularx}
\usepackage{algorithm}
\usepackage{algorithmic}
    \usepackage{pgfplots}
    \usepackage{tikz}
    \usepackage{circuitikz}
     \usepackage{listings}
     \usepackage{mathtools}
    \usepackage{caption}
    \usepackage{cite,calc,xcolor,mathrsfs,dsfont}
    \usepackage{xcolor}
    \usepackage{harmony}
    \usepackage[toc,page]{appendix}

    \newcommand{\B}{\mathscr{B}}
    
    \newcommand{\LST}{\mathscr{L}}
    \newcommand{\RScr}{\mathscr{R}}

    \newcommand{\V}{\mathscr{V}}

    \newcommand{\N}{\mathbb{N}}

    \newcommand{\dd}{\mathfrak{u}}
    
    \newcommand{\NZ}{\mathbb{N}_0}
    \newcommand{\Span}{{\rm span}}
    
    \newcommand{\rad}{{\rm rad}}

    \newcommand{\im}{{\rm im}}
    \newcommand{\ad}{{\rm ad}}
    \newcommand{\rank}{{\rm rank}}
    
    \newcommand{\0}{\mathbf{0}}
    \newcommand{\x}{\mathbf{x}}
    \newcommand{\z}{\mathbf{z}}

    \newcommand{\m}{\mathbf{m}}
    
    \newcommand{\ie}{{\em i.e.,} }
    \newcommand{\eg}{{\em e.g.,} }
    
    \newcommand{\IM}{{\rm im }}

    \newtheorem{thm}{Theorem}[section]
    \newtheorem{cor}[thm]{Corollary}
    \newtheorem{lem}[thm]{Lemma}
    \newtheorem{prop}[thm]{Proposition}
    \theoremstyle{definition}
    
    \newtheorem{exm}[thm]{Example}
    \newtheorem{rem}[thm]{Remark}

    \newcommand*{\Scale}[2][4]{\scalebox{#1}{$#2$}}%
    \numberwithin{equation}{section}
    \setcounter{tocdepth} {5}
    \topmargin= -0.6in   
    \oddsidemargin=-0.45in
    \textheight= 9.6in 
    \textwidth=7.56in
\oddsidemargin=-0.10in
    \topmargin= -0.5in   
    \textheight= 7.8in
    \textwidth=6.4in 
    \parindent= 0.3in
    \def\l {\label}
    
    \def\Blem {\begin{lem}}
    \def\Elem {\end{lem}}
    \def\be {\begin{equation}}
    \def\ee {\end{equation}}
    \def\ba {\begin{eqnarray}}
    \def\ea {\end{eqnarray}}
    \def\bes {\begin{equation*}}
    \def\ees {\end{equation*}}
    \def\bas {\begin{eqnarray*}}
    \def\eas {\end{eqnarray*}}
    \def\bpr {\begin{proof}}
    \def\epr {\end{proof}}
   \def\BState{\State\hskip-\ALG@thistlm}
    \begin{document}
    \baselineskip=18pt
    \renewcommand {\thefootnote}{ }

    \pagestyle{empty}

\begin{center}
\leftline{}
\vspace{-0.500 in}
{\Large \bf Parametric normal form classification for Eulerian and rotational non-resonant double Hopf singularities
} \\ [0.3in]

{\large Majid Gazor\(^{\dag}\)\footnote{$^\dag\,$Corresponding author. Phone: (98-31) 33913634; Fax: (98-31) 33912602;
Email: mgazor@cc.iut.ac.ir; Email: ahmad.shoghi@math.iut.ac.ir.} and Ahmad Shoghi }

\vspace{0.105in} {\small {\em Department of Mathematical Sciences, Isfahan University of Technology
\\[-0.5ex] Isfahan 84156-83111, Iran }}


\vspace{0.05in}

\end{center}

\vspace{-0.10in}

    \baselineskip=16pt


\begin{abstract}
In this paper we provide novel results on the infinite level normal form and orbital normal form classifications of nonlinear Eulerian and rotational vector fields with two pairs of non-resonant imaginary modes. We use the method of multiple Lie brackets and its extension along with time rescaling for orbital normal form classification. Furthermore, we apply two reduction techniques. The first is to use the radical Lie ideal of rotational vector fields and its corresponding quotient Lie algebra.  The second technique is to employ a Schur complement block matrix type in Gaussian elimination and analysis of block matrices. The infinite level parametric normal form classification are also presented. The latter is also viewed as a normal form result for multiple-input controlled systems with non-resonant double Hopf singularity. We also discuss {\it nonlinear symmetry transformations} associated with the {\it nonlinear symmetry group} of the simplest normal forms. Symbolic normal form transformation generators are derived for computer algebra implementation. Further, the results are efficiently implemented and verified using {\sc Maple} for all three types of normal form computations up to arbitrary degree, where they can also include both small bifurcation parameters and arbitrary symbolic constant coefficients.
\end{abstract}

\vspace{0.10in}

\noindent {\it Keywords:} \ Normal form classifications; Parametric normal forms; Eulerian systems.

\vspace{0.10in} \noindent {\it 2010 Mathematics Subject Classification}:\, 34C20; 34A34; 34C14.


\vspace{-0.1in}

\section{Introduction }

In this paper we are concerned with normal forms of Eulerian and rotational vector fields with a non-resonant double Hopf singularity.
Hence, we consider
\be\label{Eq1}
v(\x):= v_0+ E_{f}+ \Theta^1_{g_1}+ \Theta^2_{g_2},
\ee where \(v_0:=w_1\Theta^1_{0,0}+ w_2\Theta^2_{0,0},\) \(\omega_1\omega_2\neq0,\) \(\frac{\omega_1}{\omega_2}\notin\mathbb{Q},\) \(g_1(0)= g_2(0)= f(0)=0,\)
\bas
&E_f:= fE_{0,0},\quad E_{0,0}:= E^1_{0,0}+E^2_{0,0}, \quad E^i_{0,0}:= x_i\frac{\partial}{\partial x_i}+y_i\frac{\partial}{\partial y_i}, &\\& \Theta^i_{g_i}:= g_i(\x) \Theta^i_{0,0},\quad \Theta_{0, 0}^i:= -y_i\frac{\partial}{\partial x_i}+x_i\frac{\partial}{\partial y_i}, &
\eas for any \(g_1, g_2, f\in\mathbb{R}[[\x]],\) \(\x:=(x_1, y_1, x_2, y_2),\) \(g\neq0,\)  and \(i= 1, 2.\) We refer to \(E_{f} \) as an Eulerian, \(E^i_{f}= fE^i_{0, 0}\) as a radial vector field while \(\Theta^i_{g_i} \) stands for a {\it rotating vector field}. The vector field \eqref{Eq1} is associated with the differential system \(\frac{d\,x_i}{d\,t}= -w_iy_i-y_if_i+x_ig,\) \(\frac{d\,y_i}{d\,t}= w_ix_i+x_if_i+y_ig\) for \(i=1, 2.\)
We refer to
\be\label{InpSys} w:= v_0+ E_{F}+\Theta^1_{G_1}+ \Theta^2_{G_2}
\ee as a multiple-parametric perturbation of \(v(x),\) a parametric vector field
or a multiple input-system 
when \(F, G_i\in \mathbb{F}[[\x, \mu]]\), \(\mu\in \mathbb{R}^N,\) \(F(x, 0)= f(x),\) \(G_i(x, 0)= g_i(x),\) \(i=1, 2.\)

\pagestyle{myheadings} \markright{{\footnotesize {\it M. Gazor and A. Shoghi \hspace{1.45in} {\it Normal forms of Eulerian double-Hopf singularities}}}}


The conventional approach is to exclude the input parameters of an input system  by setting them to zero and then, obtain the normal form of the system without inputs. Then, one derives a parametric model (called universal unfolding) by adding parametric terms to the normal form system so that the dynamics of the universal unfolding would represent the local dynamics of any possible small perturbations of the normal form system. Next, the bifurcation analysis of the universal unfolding concludes any possible bifurcation scenarios of the original input system. However, this does not describe the actual {\it quantitative} dynamical experience in terms of the original parameters. Hence, the only useful normal form for the actual bifurcation analysis and control of a real life problem is the controlled and parametric normal forms. These have recently been obtained for only a few cases; see \cite{GelfreichNonl2014,GelfreichNonl2009,GazorSadriBog,GazorMoazeni,GazorSadri,GazorYuSpec,YuLeung03} while we here treat the families \eqref{Eq1} and \eqref{InpSys}. The controlled and parametric normal forms are derived in a way to play the role universal unfolding of the original plant. Thus, the controller designs based on these will be robust against small unavoidable errors and perturbations; see \cite{GazorSadriBog,GazorSadri}. Furthermore, the truncated classical normal forms may destroy the Eulerian structural symmetry of the vector field \eqref{Eq1}. Hence, the truncated normal form system may represent a {\it qualitative} dynamics {\it inconsistent} with the original Eulerian dynamics. Therefore, the second goal here is to classify normal forms of the plant \eqref{Eq1} and controlled system \eqref{InpSys} so that their Eulerian structure are preserved in all normalization steps. This is possible when the set of normalizing transformations preserve the structural symmetry. These facilitate our third objective for a symbolic normal form computer algebra implementation.

In the last two decades, there have been numerous contributions on hypernormalization and classification of two dimensional state space systems; \eg see \cite{GelfreichNonl2014,GelfreichNonl2009,KokubuWang,ChenWang,chendora,wlhj,GazorYuSpec,GazorMoazeni,Stroyzyna17,ZoladekNF2015,Zoladek03} and the references therein. As the state dimension of the singularities increase, the complexity of hyper-normal form classifications significantly amplifies.
For the three-dimensional cases, there are only a few results for Hopf-zero and triple zero singularities; see \cite{Algaba30,Wang3DJDE2014,Wang3DIJBC2017a,GazorMokhtari3D,YuTriple,GazorSadri,GazorMokhtari,WangChenHopfZ,AlgabaHopfZ,YuHopfZero}. Contributions on normal form classification of three dimensional singularities use specific structural symmetries. They use and preserve the structure in their normal form results and/or use it for a normal form decomposition. However, there does not yet exist results on normal form classification with regards to non-resonant double Hopf singularity.

Sections \ref{Sec3}, \ref{Sec4}, and \ref{Sec5} treat normal forms of all generic and degenerate cases of vector field types \eqref{Eq1} by preserving their structural symmetry. However, in order to simplify the following formulas presented in the introduction, we assume that
\be\label{Generic}
{b_{0,1}}^2 b_{2,0}-b_{0,1}b_{1,0}b_{1,1}+b_{0,2}{b_{1,0}}^2\neq 0,
\ee and \(b_{1, 0}\neq 0\) when \(b_{i, j}\)-s are the first level normal form coefficients.
Then, the infinite level normal form of \eqref{Eq1} reads
\bas
&\dot{z_i}=\sum_{j+k=0}^{2}c^i_{j, k}|z_1|^{2j}|z_2|^{2k} z_i+\sum_{k\geq 3}c^i_{0, k}|z_2|^{2k} z_i, \quad \dot{w_i}= \dot{\overline{z_i}}= \overline{\dot{z_i}}, \; w_i= \overline{z_i}, \; i=1, 2, &
\eas where \(c^i_{j, k}:=b_{j,k}+Ia^i_{j,k},\) \(b_{0, 3}=a^i_{2,0}=a^i_{1, 1}=0, b_{0, 0}=0, a^i_{0, 0}=\omega_i,\) \(I^2=-1,\) \((z_i, w_i)\)  denotes the complex coordinates and the over-line stands for the complex conjugate. The infinite level coefficients are uniquely determined by equation \eqref{Eq1}. When $b_{1, 0}:=\frac{\partial^2 f}{4\partial x_1^2}(\0)+\frac{\partial^2 f}{4\partial y_1^2}(\0)\neq 0,$ the infinite level orbital normal form is
\bas
&\dot{z_i}=I\omega_i z_i+\sum_{j=0}^1c^i_{1-j, j}|z_1|^{2-2j}|z_2|^{2j}z_i+b_{0,2}|z_2|^{4}z_i+\sum_{j\geq 2}a^i_{0,j}I|z_2|^{2j}z_i, \quad \dot{w_i}= \overline{\dot{z_i}}, i=1, 2,&
\eas where \(a^1_{0, j}=0\) for \(j\geq 2\). Since \(b_{0, 2}\neq 0\), the input vector field \eqref{InpSys} can be uniquely transformed into
\bas
&\Scale[0.97]{\dot{z_1}=b_{0,2}|z_2|^{4}z_1
\!+\!\sum_{n=0}^{1}\sum_{j=0}^{n}\left(b_{n-j,j}(\mu)\!+\!a^1_{n-j,j}(\mu)I\right)|z_1|^{2(n-j)}|z_2|^{2j}z_1,}\qquad\qquad\qquad\qquad\quad&\\
&\Scale[0.965]{\dot{z_2}=\sum_{n=0}^{1}\sum_{j=0}^{n}\left(b_{n-j,j}(\mu)+a^2_{n-j,j}(\mu)I\right)|z_1|^{2(n-j)}|z_2|^{2j}z_2+b_{0,2}|z_2|^{4}z_2\!+\!\sum_{j=2}^\infty a^2_{0,j}(\mu)I|z_2|^{2j}z_2}&
\eas and \(\dot{w_i}= \overline{\dot{z_i}}\) for \(i=1, 2.\) Here, \(b_{1, 0}(\mu)=b_{1, 0},\) \(b_{0, 0}(\0)=0,\) \(a^2_{0, 0}(\0)=\omega_2\) and \(a^1_{0, 0}(\mu)=\omega_1.\)

The rest of this paper is organized as follows. Complete normal form classification of vector fields \eqref{Eq1} are derived in Section \ref{Sec3}; where we only use the changes of state variables. Proofs include deriving the transformation generator formulas for practical implementations in a computer algebra system. Near-identity time rescaling are also used for the orbital normal form classification in Section \ref{Sec4}. Section \ref{Sec5} treats the parametric normal form classification for multiple-input vector fields \eqref{InpSys}. An efficient algorithm is proposed in Section \ref{Sec6} for the normal form computation using computer algebra systems. Some normal form formulas are also derived for practical applications.

\section{ Infinite level normal forms }\label{Sec3}

For \(l, k\in \mathbb{Z},\) and \(i=1, 2,\) let
\begin{eqnarray}\label{ETheta}
&\Scale[0.94]{E_{k-l, l}=\left({x_1}^2+{y_1}^2 \right)^{k-l}\left({x_2}^2+{y_2}^2 \right)^{l}E_{0, 0}\quad
\text{ and }\quad \Theta_{k-l, l}^i :=\left({x_1}^2\!+\!{y_1}^2 \right)^{k-l}\left({x_2}^2\!+\!{y_2}^2 \right)^{l}\Theta_{0, 0}^i}.&
\end{eqnarray}
Define \(\LST= \mathbb{R}v_0+\{\sum^{\infty, k}_{k=0, l=0} a_{l, k}E_{k-l, l}+\sum^{\infty, k, 1}_{k=1, l=0, i=0} b^i_{l, k}\Theta_{k-l, l}^i\,|\, a_{l, k}, b^i_{l, k}\in \mathbb{R} \}.\) Any non-resonant double Hopf singularity can be transformed into a first level normal form given by a vector field \(v^{(1)}\in \LST.\) Hence, we call \(\LST\) as the space of all (first level) normal form vector fields. Assume that \(S\in \LST\) has no linear term in its power series expansion. Then, \(S\) generates a near-identity changes of state variables, that is, the time-one map flow associated with \(S.\) Therefore, we call \(S\) a transformation generator. A Lie bracket is defined by \([u, w] := uw- wu,\) where \(v\) and \(w\) are considered as differential operators; see \cite{KokubuWang,Wang3DIJBC2017a,MurdBook}. This provides a natural Lie algebra structure for \(\LST.\)
\begin{prop}[Structure constants] \label{structure constant 1}  The space \(\LST\) is a Lie algebra and its structure constants are given by
\ba\label{Struc}
&\Scale[0.9]{\left[\Theta_{m,n}^i,\Theta_{k,l}^j\right] =0, \,\,\left[\Theta_{m,n}^i, E_{k,l}\right]=2(m\!+\!n) \Theta_{m+k,n+l}^i,\,\,
\left[E_{m,n}, E_{k,l}\right] =2(m\!+\!n\!-\!k\!-\!l)E_{m+k,n+l}.}&
\ea
\end{prop}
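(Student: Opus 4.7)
The plan is to reduce each of the three brackets to a single application of the Leibniz-type identity
\[
[fX, gY] = fg[X, Y] + f(X\cdot g)\, Y - g(Y \cdot f)\, X
\]
for smooth functions $f, g$ and smooth vector fields $X, Y$, combined with a handful of elementary facts about the three base generators $E_{0,0}$, $\Theta_{0,0}^{1}$, $\Theta_{0,0}^{2}$ and the scalar monomials $\rho_{m,n} := (x_1^2 + y_1^2)^{m}(x_2^2 + y_2^2)^{n}$, in terms of which $E_{m,n} = \rho_{m,n} E_{0,0}$ and $\Theta_{m,n}^{i} = \rho_{m,n} \Theta_{0,0}^{i}$.

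First I would record the base commutators. The identity $[\Theta_{0,0}^{1}, \Theta_{0,0}^{2}] = 0$ holds because the two fields act on disjoint coordinate pairs. The identity $[\Theta_{0,0}^{i}, E_{0,0}] = 0$ follows by splitting $E_{0,0} = E_{0,0}^{1} + E_{0,0}^{2}$: on the pair $(x_j, y_j)$ with $j \neq i$ one of the two operators acts trivially, while on $(x_i, y_i)$ the planar rotation and the planar Euler field commute by a two-line coordinate check. Next I would record the two scalar identities $\Theta_{0,0}^{i}(\rho_{m,n}) = 0$, since $\Theta_{0,0}^{i}$ annihilates both $r_1^2 := x_1^2 + y_1^2$ and $r_2^2 := x_2^2 + y_2^2$, and $E_{0,0}(\rho_{m,n}) = 2(m+n)\rho_{m,n}$, an immediate consequence of the Euler identity $E_{0,0}(r_j^2) = 2 r_j^2$.

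With these facts in hand the three formulas follow by applying the Leibniz identity with $f = \rho_{m,n}$, $g = \rho_{k,l}$ and $X, Y \in \{E_{0,0}, \Theta_{0,0}^{1}, \Theta_{0,0}^{2}\}$. For $[\Theta_{m,n}^{i}, \Theta_{k,l}^{j}]$ each of the three summands on the right-hand side vanishes, using the base commutator together with $\Theta_{0,0}^{i}(\rho_{k,l}) = \Theta_{0,0}^{j}(\rho_{m,n}) = 0$. For $[\Theta_{m,n}^{i}, E_{k,l}]$ the first and second summands vanish by $[\Theta_{0,0}^{i}, E_{0,0}] = 0$ and $\Theta_{0,0}^{i}(\rho_{k,l}) = 0$, leaving only the contribution proportional to $E_{0,0}(\rho_{m,n}) = 2(m+n)\rho_{m,n}$, which collapses into the stated multiple of $\Theta_{m+k, n+l}^{i}$. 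For $[E_{m,n}, E_{k,l}]$ the base-commutator term vanishes and the two derivative terms combine, via $E_{0,0}(\rho_{k,l}) - E_{0,0}(\rho_{m,n})$ after reindexing, into the stated multiple of $E_{m+k, n+l}$.

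The only real obstacle is bookkeeping: making sure the signs emerge consistently with the convention $[u,w] = uw - wu$ fixed in the paper, and handling the edge case $i = j$ in the first bracket, where $[\Theta_{0,0}^{i}, \Theta_{0,0}^{i}] = 0$ is trivial. The closure of $\LST$ under the bracket, and hence the Lie algebra assertion, is then automatic because every right-hand side lies in the span of the generators $E_{\cdot,\cdot}$ and $\Theta^{i}_{\cdot,\cdot}$, while $v_0$ lies in the kernel of every bracket with the generators on account of $[\Theta_{0,0}^{i}, \Theta_{0,0}^{j}] = [\Theta_{0,0}^{i}, E_{0,0}] = 0$ applied to $v_0 = \omega_1\Theta_{0,0}^{1} + \omega_2\Theta_{0,0}^{2}$.
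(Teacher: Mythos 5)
Your overall strategy is exactly the ``straightforward computation'' the paper alludes to, organized the right way: the Leibniz identity $[fX,gY]=fg[X,Y]+f(Xg)Y-g(Yf)X$ together with $[\Theta_{0,0}^i,\Theta_{0,0}^j]=[\Theta_{0,0}^i,E_{0,0}]=0$, $\Theta_{0,0}^i(\rho_{k,l})=0$ and $E_{0,0}(\rho_{m,n})=2(m+n)\rho_{m,n}$ is all one needs, and your closure argument for the Lie algebra claim is fine. But the one step you explicitly defer --- ``making sure the signs emerge consistently with the convention $[u,w]=uw-wu$'' --- is precisely where the argument, as written, fails to deliver the stated formulas. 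With that convention your Leibniz identity is correct, and the surviving term in the second bracket is the \emph{third} summand, $-\rho_{k,l}\,\bigl(E_{0,0}\rho_{m,n}\bigr)\,\Theta_{0,0}^i=-2(m+n)\,\Theta_{m+k,n+l}^i$, not $+2(m+n)\,\Theta_{m+k,n+l}^i$; likewise the two derivative terms in the third bracket combine to $2(k+l-m-n)E_{m+k,n+l}$, the negative of the displayed constant. A two-line check in the plane confirms this: for $\Theta=-y\partial_x+x\partial_y$, $E=x\partial_x+y\partial_y$, $\rho=x^2+y^2$ one finds $(\rho\Theta)E-E(\rho\Theta)=-2\rho\Theta$.

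So either you must prove the proposition with both nonzero constants negated, or you must work with the opposite bracket convention $[u,w]=Du\,w-Dw\,u=wu-uw$ (the Jacobian convention common in the normal-form literature the paper follows, and the one consistent with the paper's later homological computations), in which case your computation gives exactly the displayed constants. Either choice is defensible, but the proof cannot simply assert that the signs ``collapse into the stated multiple'': under the convention you adopted they do not, and since every subsequent recursion in the paper (e.g.\ the formulas for the transformation coefficients) inherits these signs, the discrepancy is not cosmetic. Fix the convention explicitly at the outset and carry the sign through the two nontrivial brackets; everything else in your argument stands.
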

\bpr The proof is a straightforward computation.
\epr
The normal form formulations here are presented using the method of multiple Lie brackets and matrix representations; \eg see \cite{wlhj,ChenWang,KokubuWang,chendora,WangChenHopfZ,Wang3DJDE2014,Wang3DIJBC2017a}. We provide recursive relations for the normal form transformation generators transforming the {\it updating} vector field into a higher level normal form. Following \cite{GazorSadriBog,GazorMoazeni,GazorYuSpec}, we {\it simultaneously} recall the theory of our infinite level hypernormalization steps for this section and the next three sections. Let \(\mathds{B}\) be either \(\LST\) or its parametric extension, \(\mathds{B}=\sum^\infty_{k=0} \mathds{B}_k\) be a \(\mathbb{Z}_{\geq0}\)-Lie graded structure for \(\mathds{B},\) \ie \([\mathds{B}_k,  \mathds{B}_l]\subseteq \mathds{B}_{l+k}\) for all \(l, k\in \mathbb{Z}_{\geq0},\) and for \(v_k\in \mathds{B}_k,\) \(v=\sum^\infty_{k=0} v_k\in \mathds{B}\) be an updating (\ie being normalized) non-resonant double Hopf singular vector field. Denote the graded linear space \(\mathds{A}=\sum^\infty_{k=1} \mathds{A}_k\) for the normalizing transformation generators and \(*\) for its graded action on \(\mathds{B}\), \ie \(\mathds{A}_k*\mathds{B}_l\subseteq \mathds{B}_{k+l}\) for all \(l\geq 0\) and \(k\geq 1\). The space \(\mathds{A}\) and action \(*\) are different in Sections \ref{Sec3}, \ref{Sec4}, and \ref{Sec5} and they will be defined in these sections accordingly. Define
\ba\label{dkr}
&d^{k, 1}: \mathds{A}_k\rightarrow \mathds{B}_k, \qquad \hbox{ by }\quad d^{k, 1}(X_k):= X_k*v_0 \quad \hbox{ for } \quad X_k\in \mathds{A}_k.&
\ea Assume that \(\RScr^{k, 1}:= \IM (d^{k, 1})\) and \(\mathcal{C}^{k, 1}\) denotes for its complement space that is uniquely determined via a normal form style. Then, \(\RScr^{k, 1}\oplus \mathcal{C}^{k, 1}= \mathds{B}_k\) and by \cite[Lemma 4.2]{GazorYuSpec}, there exists a sequence of near identity transformations sending \(v\) into the first level extended partial (orbital or parametric depending on the space \(\mathds{A}\)) normal form \(v^{(1)}:= v^{(1)}_0+\sum^\infty_{k=1} v^{(1)}_k,\) where \(v^{(1)}_k\in\mathcal{C}^{k, 1}\) and \(v_0=v^{(1)}_0\); also see \cite{Murd09}. The idea is to use the transformations generated by \(\mathds{A}_k\) to eliminate all terms living in \(\RScr^{k, 1}\) from the normalizing vector field. Since the space \(\ker(d^{k, 1})\) does not contribute to the simplification of terms in grade \(k,\) we shall use them in normalizing higher graded terms. A systematic derivation of hypernormalization steps to infinity is required for derivation and computer algebra implementation of the infinite level normal forms, \ie no further normalization is possible. In each normalization step, one needs to simultaneously track the effects of the normalizing transformations to the normalizing vector field and also derive the available normalizing transformations for higher level hypernormalization steps. This is naturally reflected to the computational burden for the normal form classification as the state-dimension of the singularity increases. Thereby,
we inductively denote
\be\label{dkr1}
d^{k,r}: \ker(d^{k-1,r-1})\times \mathds{A}_k\rightarrow \mathds{B}_k \quad (\hbox{for any } r\leq k),
\ee given by \(d^{k,r}(X_{k-r+1}, \ldots, X_{k-1}, X_{k}):= \sum^{r-1}_{i=0} X_{k-i}*v^{(r-1)}_i,\) where
\be\label{TransOrder}
(X_{k-r+1}, X_{k-r+2}, \cdots, X_{k-1})\in \ker(d^{k-1,r-1})
\ee for the \(r\)-th level map; also see the differential of bi-degree \((r, 1-r)\) defined on \cite[page 1015]{GazorYuSpec}. For any \(r>k,\) let \(d^{k,r}:=d^{k,k}.\) Let \(\RScr^{k,r}:= \IM (d^{k,r})\) and \(\mathcal{C}^{k, r}\) be its complement subspace with respect to a formal basis style, \ie \(\RScr^{k,r}\oplus \mathcal{C}^{k,r}= \mathds{B}_k.\) The complement space associated with formal basis styles are generated by {\it Eulerian terms} and {\it rotational terms}; see \cite{GazorYuSpec} for more details on formal basis style. The space of all rotational vector fields constitutes the radical Lie ideal of Lie algebra \(\LST\) and provides a reduction technique for normal form computations; see \cite{GazorShoghiLie} for a proof of our claim. In particular, \bes
\mathcal{C}^{k,r}= \pi_{\rad\, \LST}(\mathcal{C}^{k,r})\oplus\Pi(\mathcal{C}^{k,r}+\rad\,\LST)\quad \hbox{ and }\quad \pi_{\rad\, \LST}(\mathcal{C}^{k,r})= \mathcal{C}^{k,r}\cap \rad\, \LST.
\ees Let \({\rm im }\,d^{k, r}+\rad\,\LST\) be a linear subspace of quotient Lie algebra \(\frac{\LST_k+\rad\, \LST}{\rad\,\LST}.\) Since the formal basis style gives the priority of elimination to Eulerian terms than rotational terms of the same grade, the complement space for \(\RScr^{k,r}\) is the same as the complement space for \(({\rm im }\, d^{k, r}\cap\, \rad\,\LST)+\Pi({\rm im }\, d^{k, r}+\rad\,\LST),\) \ie
\begin{itemize}
  \item The linear space \(\Pi({\rm im }\, d^{k, r}+\rad\,\LST)\) determines all the normalizing {\it Eulerian terms} in the \(r\)-th level.
  \item The normalizing {\it rotational terms} in the \(r\)-th level normalization step are determined by \({\rm im }\, d^{k, r}\cap\,\rad\,\LST.\)
\end{itemize}
These explain the proofs in the following sections, where we present recursive formulas for transformation generators and their impact on the normalizing vector field. Hence,
\ba\label{TwoEqs}
&\Scale[0.85]{({\rm im }\, d^{k, r}\cap{\rm rad}\LST)\oplus(\mathcal{C}^{k,r}\cap{\rm rad}\LST)\!=\! \rad\LST\cap \LST_k
\hbox{ and } ({\rm im }d^{k, r}\!+\!{\rm rad}\LST)\oplus (\mathcal{C}^{k,r}\!+\!{\rm rad}\LST)\!=\!\frac{\LST_k\!+\!{\rm rad}\LST}{{\rm rad}\LST}.}&
\ea Equations \eqref{TwoEqs} suggest two reductional techniques for the computation of complement spaces in \(\LST_k\):
\begin{enumerate}
  \item Possible restriction of the homological differential maps \(d^{k, r}\) on the {\it radical Lie ideal}.
  \item Introduction of a reduced map \(\hat{d}^{l+r, r+1}\) based on the factor algebra \(\frac{\LST}{\rad\LST}\); see equation \eqref{dhat}.
\end{enumerate}
When all \(k\)-grade-homogenous parts \(v_k\) of a vector field \(v\) belongs to \(\mathcal{C}^{k,r}= (\mathcal{C}^{k,r}\cap\rad\,\LST)\oplus\Pi(\mathcal{C}^{k,r}+\rad\,\LST)\), the vector field is called a \(r\)-th level extended partial (orbital or parametric) normal form. The vector field \(v\) is called the infinite level (orbital or parametric) normal form, when \(v_k\in \mathcal{C}^{k,k}\) for all natural numbers \(k\). The coefficients of the infinite level normal forms are uniquely determined by the original vector field.

\begin{thm}\cite[Theorem 4.4]{GazorYuSpec}\label{LNF} Consider a formal basis normal form style, a Lie graded structure for \(\mathds{B}\) and a grading-module structure for \(\mathds{B}\) over the transformation (generator) space \(\mathds{A}.\) Then for any vector field \(v\in \mathds{B},\) there is a sequence of near-identity transformations so that they transform \(v\) into its \(r\)-th level extended partial (orbital or parametric depending on the transformation space \(\mathds{A}\)) normal form \(v^{(r)}\) and infinite level normal form \(v^{(\infty)}\).
\end{thm}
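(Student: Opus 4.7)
The plan is to establish the theorem by a double induction: first on the hypernormalization level $r$ and, within each level, on the grade $k$. At the base level $r=1$, given the updating vector field $v = v_0 + \sum_{k\geq 1} v_k$, I would proceed grade by grade. At grade $k$, pick $X_k \in \mathds{A}_k$ whose action $X_k * v_0 = d^{k,1}(X_k)$ matches the component of $v_k$ lying in $\RScr^{k,1}=\IM(d^{k,1})$; this is possible by the very definition of the image. The time-one flow generated by $X_k$ is near-identity because $X_k$ has no linear part, and the grading-module property $\mathds{A}_k * \mathds{B}_l \subseteq \mathds{B}_{k+l}$ guarantees that the flow leaves $v_0, v_1, \ldots, v_{k-1}$ unaltered and modifies $v_k$ by exactly $-d^{k,1}(X_k)$, with the remaining contributions living in grades $>k$. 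Composing these transformations in the order $k=1,2,\ldots$ produces $v^{(1)}$ with $v_k^{(1)} \in \mathcal{C}^{k,1}$ for every $k$, which is the first level extended partial normal form.

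For the inductive step from level $r-1$ to level $r$, assume $v^{(r-1)}$ satisfies $v_k^{(r-1)} \in \mathcal{C}^{k,r-1}$ for every $k$. To simplify $v_k^{(r-1)}$ further, I would apply an ordered composition of time-one flows generated by tuples $(X_{k-r+1}, \ldots, X_k)$ subject to the constraint that the prefix $(X_{k-r+1}, \ldots, X_{k-1})$ lies in $\ker(d^{k-1,r-1})$. The kernel condition, inherited recursively from the preceding level, ensures that these generators leave all already-normalized grades $<k$ invariant. A Baker--Campbell--Hausdorff expansion of the composition shows that the net change in $v_k^{(r-1)}$ equals $\sum_{i=0}^{r-1} X_{k-i}*v_i^{(r-1)}$, which is precisely $d^{k,r}(X_{k-r+1}, \ldots, X_k)$ as introduced in \eqref{dkr1}. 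Choosing the tuple so that this cancels the $\RScr^{k,r}$-component of $v_k^{(r-1)}$ relative to the splitting $\RScr^{k,r} \oplus \mathcal{C}^{k,r} = \mathds{B}_k$ (dictated by the formal basis style) yields $v_k^{(r)} \in \mathcal{C}^{k,r}$. Sweeping over $k=r,r+1,\ldots$ completes the passage to level $r$.

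For the infinite level normal form, I would pass to the formal limit. Because every transformation applied in the level-$r$ step acts trivially on grades below the grade it targets, the component $v_k^{(r)}$ stabilizes as soon as $r\geq k$; hence $v_k^{(\infty)} := v_k^{(k)} \in \mathcal{C}^{k,k}$ is well defined and independent of further normalization. The composition of all the near-identity transformations converges coefficientwise in the formal power series topology, so the full infinite-level transformation makes sense as a formal near-identity map and sends $v$ to $v^{(\infty)}$.

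The main technical obstacle is the verification that in the $r$-th level step the cumulative contribution at grade $k$ is exactly $\IM(d^{k,r})$, and nothing more. This requires careful bookkeeping through the iterated brackets: the kernel condition $(X_{k-r+1}, \ldots, X_{k-1}) \in \ker(d^{k-1,r-1})$ must propagate through the BCH expansion so that no previously eliminated terms in grades $<k$ are reintroduced, while the grading axiom simultaneously guarantees that the higher commutator corrections land in grades $>k$ and can therefore be deferred to later normalization steps. This recursive structure is precisely what the inductive definition of $d^{k,r}$ and the constraint \eqref{TransOrder} are designed to capture, so once the bookkeeping lemma is in place the remainder of the argument is a straightforward application of the complementary-subspace decomposition $\RScr^{k,r} \oplus \mathcal{C}^{k,r} = \mathds{B}_k$.
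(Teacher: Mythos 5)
This theorem is not proved in the paper at all: it is quoted verbatim from \cite[Theorem 4.4]{GazorYuSpec} (with \cite[Lemma 4.2]{GazorYuSpec} supplying the first-level statement), so the comparison must be with the spectral-sequence proof given there rather than with anything in this manuscript. Your double induction on level $r$ and grade $k$ is the ``unpacked'' elementary version of that argument: the reference organizes exactly your bookkeeping into a spectral sequence of a filtered Lie algebra, in which $d^{k,r}$ is the differential of bi-degree $(r,1-r)$ and the stabilization $v_k^{(\infty)}=v_k^{(k)}$ (consistent with the convention $d^{k,r}:=d^{k,k}$ for $r>k$) is the convergence of the spectral sequence. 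So the strategy is right and is essentially the same proof in different clothing; what the spectral-sequence formalism buys is precisely the lemma you defer.

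That deferred lemma is, however, a genuine gap rather than routine bookkeeping, and you have correctly located where it lives. When $r$ is large relative to $k$, the tuple contains generators $X_j$ of grade as low as $j=k-r+1$, and the quadratic and higher BCH corrections $X_{j_1}*(X_{j_2}*v_i)$ have total grade $j_1+j_2+i$, which can be $\le k$ (indeed $<k$) whenever $2(k-r+1)\le k$. So it is not automatic that ``the net change in $v_k^{(r-1)}$ equals $\sum_{i=0}^{r-1}X_{k-i}*v_i^{(r-1)}$'' modulo grades $>k$: one must show that the recursive kernel conditions $X_{k-r+1}\in\ker d^{k-r+1,1}$, $(X_{k-r+1},X_{k-r+2})\in\ker d^{k-r+2,2}$, \dots\ force all such lower-grade and grade-$k$ nonlinear corrections either to vanish or to be reabsorbed into $\IM(d^{k,r})$, and moreover that nothing \emph{outside} $\IM(d^{k,r})$ can be produced (otherwise $\mathcal{C}^{k,r}$ would not be uniquely determined). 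A second, smaller omission: the theorem covers the orbital and parametric cases, where $\mathds{A}$ contains time-rescaling generators and the action is $(T,S)*v=Tv+[S,v]$ rather than a pure Lie bracket; your argument is phrased entirely in terms of time-one flows of vector fields. The induction survives because the only properties used are the grading-module axiom $\mathds{A}_k*\mathds{B}_l\subseteq\mathds{B}_{k+l}$ and the splitting $\RScr^{k,r}\oplus\mathcal{C}^{k,r}=\mathds{B}_k$, but the ``near-identity transformation'' realized by a rescaling generator (multiplication of the vector field by $1+T$, reordered as in Section \ref{Sec4} so that time generators appear consecutively) needs to be said explicitly for the statement as written to be covered.
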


Denote \(\mathbf{e}^i_{j}:=(0, 0, \cdots, 1, \cdots, 0)\in \mathbb{R}^j\) for the \(i\)-th element of the standard basis in \(\mathbb{R}^j.\) The index for a bold zero denotes the dimension of a zero vector, \ie \(\0_k\in \mathbb{R}^k\). We, however, skip the indices when it does not lead to a confusion. We use double, triple, etc, indices for summations (or linear subspace spans), when we deal with double or triple sums; \eg we denote \(\sum^{k, n}_{j=1, j=2k}a_j\) for \(\sum^{n}_{j=2k}a_j+\sum^{k}_{j=1}a_j.\) The rest of this section deals with normalization of vector fields \eqref{Eq1} by only using changes of state variables.
\begin{lem}\label{1stLevel}
There exists a sequence of near-identity changes of state variables that they transform any vector field given by \eqref{Eq1} into
\ba\label{Eq2}
&v^{(1)}=\sum^2_{i=1}\sum_{m+n=0}^{\infty}a^i_{m,n}\Theta_{m,n}^i+\sum_{m+n= 1}^{\infty}b_{m,n}E_{m,n}\in \LST,&
\ea where \(a^i_{m,n}, b^i_{m,n}\in \mathbb{R},\) \(m, n\in \NZ:=\mathbb{N}\cup\{0\},\) and \(a^i_{0,0}= \omega_i.\)
\end{lem}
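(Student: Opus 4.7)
The plan is to cast Lemma 3.3 as an instance of the hypernormalization scheme of Theorem 2.2 applied to the ambient space of Eulerian plus rotational vector fields. Let $\mathds{B}$ be the space of formal vector fields of the form $fE_{0,0}+g_1\Theta^1_{0,0}+g_2\Theta^2_{0,0}$ with $f,g_1,g_2\in\mathbb{R}[[\x]]$, graded by the common homogeneous degree $k$ of the scalar coefficients, and let $\mathds{A}_k\subset\mathds{B}_k$ consist of the generators $X_k=sE_{0,0}+t_1\Theta^1_{0,0}+t_2\Theta^2_{0,0}$ with $s,t_1,t_2$ homogeneous of degree $k\geq 1$. The graded action is the Lie bracket, $X_k\ast v:=[X_k,v]$. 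First I would verify that $\mathds{B}$ is closed under brackets so that the time-one flow of each $X_k\in\mathds{A}_k$ preserves the Eulerian-plus-rotational ansatz of \eqref{Eq1}; combining Proposition \ref{structure constant 1} at $m=n=0$ with the scalar-vector field identity $[sX,uY]=su[X,Y]+s(Xu)Y-u(Ys)X$ shows that brackets between Eulerian and rotational fields stay within the same class.

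Next, I would compute the homological operator $d^{k,1}(X_k)=[X_k,v_0]$. Since $[E_{0,0},\Theta^i_{0,0}]=[\Theta^1_{0,0},\Theta^2_{0,0}]=0$, the identity above collapses to
\bes
d^{k,1}(X_k)=-v_0(s)\,E_{0,0}-v_0(t_1)\,\Theta^1_{0,0}-v_0(t_2)\,\Theta^2_{0,0},
\ees
so the problem decouples into three copies of the scalar operator $v_0:\mathbb{R}[[\x]]_k\to\mathbb{R}[[\x]]_k$. Passing to complex coordinates $z_j=x_j+Iy_j$, one has $v_0=I\omega_1(z_1\partial_{z_1}-\overline{z_1}\partial_{\overline{z_1}})+I\omega_2(z_2\partial_{z_2}-\overline{z_2}\partial_{\overline{z_2}})$, which is diagonal on the monomial basis with eigenvalue $I[\omega_1(\alpha_1-\beta_1)+\omega_2(\alpha_2-\beta_2)]$ on $z_1^{\alpha_1}\overline{z_1}^{\beta_1}z_2^{\alpha_2}\overline{z_2}^{\beta_2}$. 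The non-resonance hypothesis $\omega_1/\omega_2\notin\mathbb{Q}$ forces this eigenvalue to vanish exactly when $\alpha_j=\beta_j$, hence $\ker v_0=\Span\{(x_1^2+y_1^2)^m(x_2^2+y_2^2)^n\}_{m,n\in\NZ}$, and by semisimplicity of $v_0$ its image is the complementary direct summand spanned by the non-resonant monomials.

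Choosing $\mathcal{C}^{k,1}=\Span\{E_{m,n},\Theta^i_{m,n}\,|\,m+n=k/2,\ i=1,2\}$ for even $k$ and $\mathcal{C}^{k,1}=\{\0\}$ for odd $k$ therefore gives $\IM d^{k,1}\oplus\mathcal{C}^{k,1}=\mathds{B}_k$. Theorem \ref{LNF} then produces a sequence of near-identity changes of state variables transforming $v$ into a first level normal form $v^{(1)}\in\LST$ of the form \eqref{Eq2}, with the coefficients $a^i_{0,0}=\omega_i$ inherited from $v_0$ and the remaining $b_{m,n},a^i_{m,n}$ read off as the resonant projections at each grade. The main obstacle I expect is bookkeeping rather than conceptual: to feed the {\sc Maple} implementation of Section \ref{Sec6}, one must record closed-form recursive formulas for the scalar generators $(s,t_1,t_2)$ that solve $v_0(s)$ equal to the non-resonant part of the current $f$ (and similarly for $t_i$) at each grade, which amounts to inverting the diagonal action of $v_0$ on each non-resonant monomial of the updating vector field.
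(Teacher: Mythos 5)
Your proposal is correct and takes essentially the same route as the paper: the paper's (much terser) proof likewise rests on the Lie-algebra closure of $\V=\Span\{E_f,\Theta^i_{g_i}\}$ to keep the normalizing transformations inside the Eulerian--rotational class, and on the non-resonance hypothesis $\omega_1\omega_2\neq 0$, $\omega_1/\omega_2\notin\mathbb{Q}$ to conclude that the first level normal form carries a two-torus symmetry with invariant ring generated by $r_i^2=x_i^2+y_i^2$, hence lies in $\LST$. Your explicit decoupling of $d^{k,1}$ into three copies of the diagonal operator $v_0$ and the eigenvalue computation in complex coordinates is exactly the content the paper compresses into the phrase ``two-torus symmetry,'' so there is no substantive difference in method.
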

\bpr Since the space of vector field types \eqref{Eq1} defined by
\bes
\V:=\Span\left\{ E_{f}, \Theta^i_{g_i}|\, f, g_i\in \mathbb{F}[[\x]], i=1, 2\right\}
\ees
is a Lie algebra, transformation generators from \(\V\) transform the vector field \eqref{Eq1} into a vector field in \(\V.\) Using formulas \eqref{Struc}, the linear part of the vector field \(v_0=\omega_1\Theta_{0, 0}^1+ \omega_2\Theta_{0, 0}^2,\) and the assumption \(\omega_1\omega_2\neq0,\) \(\frac{\omega_1}{\omega_2}\notin\mathbb{Q},\) the first level normal form vector field \(v^{(1)}\) holds a two-torus symmetry and has an invariant algebra generated by the two-torus invariants \({r_i}^2:= {x_i}^2+{y_i}^2\) for \(i=1, 2,\) \ie \(v^{(1)}\in \LST\).
\epr

The Eulerian and rotating structure of the vector field types \eqref{Eq1} are preserved in further hypernormalization steps as long as the normalizing transformations are derived from \(\LST.\) Thus, normal form classifications in this paper deal with vector fields from \(\mathds{B}:=\LST\) with linear part \(v_0:=\omega_1\Theta_{0, 0}^1+ \omega_2\Theta_{0, 0}^2.\) The space of permissible transformation generators \(\mathds{A}\) is \([\LST, \LST]\). Let
\begin{equation}\label{sp}
s:=\min \left\lbrace m\geq 1 \vert\, \exists\, i\leq m,\ b_{m-i,i}\neq 0 \right\rbrace \quad \hbox{and }\quad p:=\min \lbrace i\,\vert\, b_{s-i, i} \neq 0\rbrace.
\end{equation} Then, \(s<\infty\) and \(p\leq s.\) Define a grading function $\delta$ by
\begin{equation*}
\delta(E_{m,n})=m+n, \quad \delta(\Theta_{m,n}^i)= s+m+n, \quad \hbox{ for }\; i=1, 2,\, \hbox{ and }\, m, n \in \mathbb{Z}_{\geq0}.
\end{equation*}

\begin{lem}\label{s+1level}
Let \(s<\infty.\) Then, the $s+1$-st level normal form of $v^{(1)}$ is given by
\begin{eqnarray*}
&v^{(s+1)}=v_0\!+\!\sum_{j=p}^{s} b_{s-j,j}E_{s-j,j}\!+\!\sum_{j=0}^{2s} b_{2s-j, j}E_{2s-j,j}\!+\!\sum_{l=1, i=1, j=0, j=l+p+1}^{\infty, 2, p-1, l+s} a^i_{l+s-j, j}\Theta_{l+s-j, j}^i&\\&+\sum^\infty_{l=1, {l\neq s}}\sum_{j=0,j=l+p+1 }^{p-1, l+s}b_{l+s-j, j}E_{l+s-j, j}+ \sum_{l+j= 1, i=1}^{s, 2}a^i_{l, j}\Theta_{l, j}^i.\qquad\;\;\,&
\end{eqnarray*}
\end{lem}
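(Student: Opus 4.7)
My plan is to invoke Theorem \ref{LNF} using the grading $\delta$ defined in the statement and the transformation-generator space $\mathds{A}=[\LST,\LST]$. Under $\delta$, the lowest non-trivial graded component of $v^{(1)}$ lies in grade $s$ and equals $v_0+\sum_{j=p}^{s}b_{s-j,j}E_{s-j,j}$, while $v^{(1)}_k=0$ for every $k<s$, by the definitions of $s$ and $p$. The structure constants of Proposition \ref{structure constant 1} further give $[X,v_0]=0$ for every $X\in\LST$, since every bracket involving $\Theta^i_{0,0}$ carries the vanishing factor $m+n=0$. Combining these observations, the homological operators $d^{k,r}$ are identically zero for $1\le r\le s$, and at level $r=s+1$ the operator collapses to
\[
d^{k,s+1}(X_{k-s})=\bigl[X_{k-s},\,\textstyle\sum_{j=p}^{s}b_{s-j,j}E_{s-j,j}\bigr].
\]

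The core of the proof is then a grade-by-grade identification of $\mathrm{im}\,d^{k,s+1}$ and its complement in the formal basis style. Writing $K=s+L$ with $L\ge1$, the structure constants yield
\[
[E_{L-k,k},E_{s-j,j}]=2(L-s)E_{L-k+s-j,k+j},\qquad [\Theta^i_{L-s-k,k},E_{s-j,j}]=2(L-s)\Theta^i_{L-k-j,k+j},
\]
with rotational generators available only when $L\ge s+1$. In the Eulerian case with $L\ne s$, the $L+1$ generators $E_{L-k,k}$ form a banded system whose $k$-th row has leading nonzero coefficient $b_{s-p,p}$ (which is nonzero by the choice of $p$) in column $b=k+p$. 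Gaussian elimination along this staircase of pivots identifies the Eulerian image with $\mathrm{span}\{E_{a,b}:a+b=L+s,\,b\in[p,L+p]\}$, so its complement is $\{E_{a,b}:a+b=L+s,\,b\in[0,p-1]\cup[L+p+1,L+s]\}$. An identical argument applied to the $L-s+1$ rotational generators (when $L\ge s+1$) produces the complement $\{\Theta^i_{a,b}:a+b=L,\,b\in[0,p-1]\cup[L-s+p+1,L]\}$, matching the rotational range in $v^{(s+1)}$ after reindexing by $l=L-s$.

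Finally, I would address the boundary grades. For $1\le L<s$, no rotational transformation generators of grade $L$ exist in $\LST$ (they would require $\Theta^i_{m,n}$ with $m+n<0$), so every rotational term of total degree $L$ in $v^{(1)}$ persists; combined with the case $L=s$, this yields the final sum $\sum_{l+j=1}^{s}\sum_{i=1,2}a^i_{l,j}\Theta^i_{l,j}$. At $L=s$ the factor $2(L-s)$ annihilates every Eulerian bracket, and the rotational generators of grade $s$ reduce to $\Theta^i_{0,0}\notin\mathds{A}$; hence every basis element of grade $2s$ survives, producing the full sum $\sum_{j=0}^{2s}b_{2s-j,j}E_{2s-j,j}$ together with the $m+n=s$ rotational layer. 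Assembling survivors across all grades produces exactly the stated $v^{(s+1)}$. The main obstacle is the uniform row-reduction argument, which must simultaneously pin down the interval $[p,L+p]$ of removable second indices for every $L$ and confirm linear independence of generators off this interval; this is precisely the banded-matrix / Schur-complement analysis announced in the abstract and constitutes the computational heart of the proof.
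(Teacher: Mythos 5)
Your proposal is correct and follows essentially the same route as the paper: reduce the level-$(s{+}1)$ operator to $\ad_{v_s}$ (using that $v_0$ is central in $\LST$), eliminate $E_{l+s-j,j}$ for $j\in[p,l+p]$ via the lower-triangular banded system with pivot $b_{s-p,p}\neq 0$ (the paper writes this as the explicit recursion \eqref{RecurRelations1}), observe that the factor $2(l-s)$ kills the grade-$2s$ Eulerian image, and handle the rotational terms by restricting to the radical ideal. The only slight overstatement is your closing remark: at this level the plain staircase/pivot argument suffices, and the Schur-complement analysis is only needed later for the $(r{+}1)$-th level.
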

\begin{proof}
Let
\ba\label{Yslk}
&\Scale[0.95]{S^s_{l+k}\!:=\!\sum_{j=0}^{l+k}\!c_{l+k-j,j}E_{l+k-j,j}\!+\!\sum^2_{i=1}\!\sum_{j=0}^{l+k-s}d_{l+k-j-s,j}^i\Theta^i_{l+k-j-s,j}\in \LST_{l+k} \hbox{ for } 0\leq k<s,} &
\ea where \(d^i_{l+k-s-j}=0\) for \(l+k\leq s,\) denote the available transformation generator of grade \(l\) for the \(s+1\)-level hypernormalization step, \ie
\(\left(S^s_{l}, S^s_{l+1}, \cdots, S^s_{l+s-1}\right)\in \ker d^{l+s-1, s} \).
Hence,
\bas
&{\rm im}\, d^{l+s, s+1}= \Span^l_{j=0} \{d^{l+s, s+1}(E_{l-j,j},\0_s)\}\oplus \ad_{v_s}(\rad\,\LST\cap \LST_{l-s}).&
\eas Let \(v_{l+s}=\sum_{j=0}^{l+s}b_{l+s-j,j}E_{l+s-j,j}+\sum_{i=1, j=0}^{2, l}a_{l-j,j}^i\Theta^i_{l-j,j}\in \LST_{l+s}.\) Since \(b_{s-p,p}\neq0,\) we propose \(c_{0}:=\frac{b_{l+s-p,p}}{-2(l-s)b_{s-p,p}},\)
\begin{eqnarray}\label{RecurRelations1}
&c_{j}:= \frac{b_{l+s-p-j,p+j}-\sum_{i=1}^{j-1}c_{i} b_{s-p+i-j,p+j-i}}{-2(l-s)b_{s-p,p}}\hbox{ for }  1\leq j\leq s-p,&\\\nonumber &
\hbox{ and }\quad c_{j}:= \frac{b_{l+s-p-j,p+j}-\sum_{j=0}^{s-p-1}c_{i-s+p+j} b_{j,s-j}}{-2(l-s)b_{s-p,p}}&
\end{eqnarray} for \(s-p+1\leq j \leq l.\) These choices for \(c_{l-j, j}\) eliminate all \(b_{l+s-j,j}E_{l+s-j,j}\)-terms when \(j=p, \cdots, p+l\) and \(s\neq l\geq 1.\) However, \(b_{2s-j,j}E_{2s-j,j}\)-terms for all \(j\leq 2s\) may appear in the \(s+1\)-level normalization step. We remark that the choices in \eqref{RecurRelations1} does not exist when \(l\leq s-p.\) By restricting the differential map \(d^{l+2s, s+1}\) on the radical Lie ideal and a similar argument, all \(\Theta^i_{l-j, j}\)-terms for \(p\leq j\leq l+p\) and \(i=1, 2\) can be eliminated in the \(s+1\)-level normalization step when \(l\geq s+1\). Due to the rank condition \(\rank\,d^{l+s, s+1}=l+1\) when \(1\leq l\leq s-1,\) \(\rank\,d^{2s, s+1}=0,\) and \(\rank\,d^{l+s, s+1}=3l-2s+3\) if \(l\geq s+1,\) further normalization in the \((s+1)\)-th level is not possible.
\end{proof} Now let
\be\label{rq}
r:=\min \left\lbrace m>s \vert\, \exists\, j\leq m, b_{m-j,j}\neq 0 \right\rbrace,\ q:=\min \lbrace j \vert b_{r-j, j} \neq 0\rbrace, \; \hbox{and }\; q\leq r,
\ee
and update the grading function $\delta$ by
\begin{equation*}
\delta(E_{m,n})=m+n, \quad \delta(\Theta_{m,n}^i)= r+m+n, \quad \hbox{ for }\; i=1, 2,\, \hbox{ and }\, m, n \in \mathbb{Z}_{\geq0}.
\end{equation*} This update in the grading structure is compatible with our normal form algorithm. Now we assume that \(s<r<\infty\) and treat the cases for \((s<\infty, r=\infty)\) and \((r=s=\infty)\) in Theorem \ref{ThmRinfty}.

\begin{thm} \label{InftLevel} Assume that \(r, s<\infty\) for \(r, s\) in equations \eqref{sp} and \eqref{rq}. When \(q<p\), there exists \(\dd\in\N\cup\lbrace0\rbrace\) such that the $(r+1)$-th level normal form \(v^{(r+1)}\) of $v^{(1)}$ in equation \eqref{Eq1} is
given by
\begin{eqnarray*}
&v_0\!+\!\sum_{j=p}^{s}\! b_{s-i, i}E_{s-i,i}\!+\!\sum_{i+j=2s}b_{i,j}E_{i,j}+\sum_{l+j= 1, i=1}^{s, 2}\!a^i_{l, j}\Theta_{l, j}^i\!+\!\sum_{j=0,j=p+r+\dd_s+1}^{q-1, r+s}\!b_{r+s-j,j}E_{r+s-j,j}&\\&
+\sum_{l=1, i=1, j=0,j=l+p+1}^{\infty, 2, p-1,l+s}a^i_{l+s-j, j}\Theta_{l+s-j, j}^i
+\sum_{\substack{ l=k\\l\neq s, 2s-r}}^{\infty}\sum_{j=0, j=p+r-s+1}^{p-1,l+r}b_{l+r-j, j}E_{l+r-j, j}.\qquad\quad&
\end{eqnarray*} Here, \(k=0\) for \(r\neq 2s\) while \(k=1\) for \(r=2s.\) When \(p<q,\) we have \({\rm im}\, d^{l+r,r+1}= {\rm im}\, d^{l+r,s+1}\) for \(l\neq s,\) and \({\rm im}\, d^{r+s,r+1}= {\rm im}\, d^{r+s,s+1}+ \Span\{E_{s+r-j, j} |\, p+r+1\leq j\leq p+r+\dd_s\}.\)
\end{thm}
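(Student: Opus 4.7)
The plan is to extend the argument of Lemma \ref{s+1level} to the $(r+1)$-th level, now incorporating the grade-$r$ Eulerian contribution $v_r = \sum_{j=q}^{r} b_{r-j,j} E_{r-j,j}$. First I will enumerate the available transformation generators: because $v^{(r)}$ vanishes in the ``gap'' grades between $s$ and $r$ (apart from grade $2s$ when $r\geq 2s$), the kernel condition $(X_l,\ldots,X_{l+r-1})\in\ker d^{l+r-1,r}$ built into \eqref{dkr1} reduces, for $r<2s$, to the single equation $[X_{l+r-1-s},v_s]=0$, leaving $X_l$ and $X_{l+r-s}$ free. The differential at level $r+1$ therefore collapses to
\[
d^{l+r,r+1}(X_\bullet)= [X_l, v_r] + [X_{l+r-s}, v_s] \;+\; \bigl[X_{l+r-2s}, v_{2s}\bigr]\text{ (only if } r\geq 2s),
\]
and each bracket is evaluated directly from the structure constants \eqref{Struc}, producing Toeplitz-type linear maps on the Eulerian basis $E_{l+r-\alpha,\alpha}$ with prefactors $2(l-r)$ and $2(l+r-2s)$.

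Next I will carry out a rank analysis in each grade $l+r$, separating Eulerian and rotational contributions via the reduction decompositions \eqref{TwoEqs}. Because $v_r$ and $v_s$ are purely Eulerian and $[\Theta^i_{m,n},\Theta^j_{k,l}]=0$, the rotational component of $\IM d^{l+r,r+1}$ coincides with that already achieved at the $s+1$ level; this is the reason the rotational terms of $v^{(r+1)}$ are identical to those of $v^{(s+1)}$ from Lemma \ref{s+1level}. For the Eulerian block, the $v_s$-piece alone supports indices $\alpha\in\{p,\ldots,p+l+r-s\}$ via the triangular recurrence already derived in \eqref{RecurRelations1}, while the $v_r$-piece contributes $\alpha\in\{q,\ldots,q+l\}$. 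When $q<p$ but $l\neq s$, the new $v_r$-directions are absorbed into the $v_s$-image after the formal-basis complement is taken, so the Eulerian eliminations at grade $l+r$ are the same as at level $s+1$, yielding the remaining indices $\{0,\ldots,p-1\}\cup\{p+r-s+1,\ldots,l+r\}$ listed in the theorem.

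At the distinguished grade $l=s$, both brackets $[X_s,v_r]$ and $[X_r,v_s]$ target $E_{r+s-\alpha,\alpha}$ along independent leading directions, and a Schur-complement-type block-matrix analysis (the very one foreshadowed in the abstract) yields the rank of the combined map; the excess $\dd_s$ of this rank over the $s+1$-level rank is exactly the number of additional eliminable Eulerian indices $\{p+r+1,\ldots,p+r+\dd_s\}$, producing the special remaining set $\{0,\ldots,q-1\}\cup\{p+r+\dd_s+1,\ldots,r+s\}$. In the complementary case $p<q$, the same rank computation forces the $v_r$-image into the $v_s$-image for every $l\neq s$, giving the claimed equality $\IM d^{l+r,r+1}=\IM d^{l+r,s+1}$, and the only new span $\{E_{s+r-j,j}\mid p+r+1\leq j\leq p+r+\dd_s\}$ appears at $l=s$. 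The main obstacle will be this Schur-complement rank computation at $l=s$: one must control the interplay of the $b_{s-j,j}$ and $b_{r-j,j}$ coefficients while handling the degenerate subcases $l=r$ (where $2(l-r)=0$ kills the $v_r$-contribution) and $l=2s-r$ (where $2(l+r-2s)=0$ kills the $v_s$-contribution), and the key technical step is writing an explicit recurrence analogous to \eqref{RecurRelations1} but with two independent leading coefficients $b_{s-p,p}$ and $b_{r-q,q}$.
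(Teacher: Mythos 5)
Your overall architecture is the paper's: reduce to grade \(r+s\) (\(l=s\)) as the only genuinely new case, represent \(d^{r+s,r+1}\) by the Toeplitz blocks \(2(s-r)\mathcal{M}_r^s\) and \(2(r-s)\mathcal{M}_s^r\), clear the indices \(q\le j\le p-1\) by a triangular recursion led by \(b_{r-q,q}\), and measure the additional eliminations by the rank \(\dd_s\) of the Schur complement \(C-DB^{-1}A\). That part, including the identification of the degenerate grade \(l=2s-r\), matches the paper.

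There is, however, a genuine error in your description of the domain of \(d^{l+r,r+1}\). The kernel \(\ker d^{l+r-1,r}\) is not cut out by the single top equation \([X_{l+r-1-s},v_s]=0\): by the recursive definition \eqref{dkr1} it imposes \(d^{l+j,j+1}=0\) for every \(j\le r-1\), and in particular \([X_l,v_s]=0\), the condition arising at grade \(l+s\le l+r-1\). For \(l\neq s\) this forces the Eulerian part of \(X_l\) to vanish, because by \eqref{Struc} the map \([\,\cdot\,,v_s]\) on Eulerian terms of grade \(l\) is \(2(l-s)\) times multiplication by the nonzero invariant polynomial \(\sum_j b_{s-j,j}({x_1}^2+{y_1}^2)^{s-j}({x_2}^2+{y_2}^2)^{j}\), which is injective in that integral domain. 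So \(X_l\) is \emph{not} free; only the grade-\(s\) slot escapes, since \([E_{s-j,j},E_{s-i,i}]=0\) — and this is exactly why \(l=s\) is the distinguished grade. Your substitute explanation for \(l\neq s\), that the new \(v_r\)-directions are ``absorbed into the \(v_s\)-image after the formal-basis complement is taken,'' would fail if \(X_l\) really were free: for \(q<p\) the bracket \([X_l,v_r]\) would reach the indices \(q,\dots,p-1\) at grade \(l+r\), which lie outside the \(v_s\)-image \(\{p,\dots,p+l+r-s\}\) and cannot be absorbed by any choice of complement, so the theorem's conclusion that \(E_{l+r-j,j}\) survives for \(0\le j\le p-1\) when \(l\neq s\) would be false. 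The correct mechanism is the kernel constraint above, which annihilates the Eulerian \([X_l,v_r]\) contribution for all \(l\neq s\); with that correction the rest of your plan reproduces the paper's proof.
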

\begin{proof} Due to the properties of \(r\) and \(s,\)
\bas
&\Scale[0.9]{\ker d^{l+r-1,r}\!=\!\oplus_{j=p}^{s}\mathbb{R}E_{s-j,j} \mathbf{e}^{s-l+1}_{r}\!+\!\oplus_{j=0, k=0}^{l+r-s+k, s-1} \mathbb{R}E_{l+r-s+k-j, j}\mathbf{e}^{r-s+k+1}_{r}\!+\!\oplus^{l-1, k, 2}_{k=0, j=0, i=1} \mathbb{R}\Theta_{k-j,j}^i\mathbf{e}^{r-l+k+1}_{r},}&
\eas when $0\leq s-l\leq r-s-1,$ and otherwise
\bas
&\Scale[0.9]{\ker d^{l+r-1,r}=\Span^{s-1, l+r-s+k}_{k=0, j=0}\lbrace\left(\0_{r-s+k}, E_{l+r-s+k-j,j}, \0_{s-k-1}\right) \rbrace
+\oplus\,\Span_{i=1, k=0, j=0}^{2,s-1,l-s+k}\lbrace \left(\0, \Theta_{l-s+k-j,j}^i, \0\right)\rbrace.}&
\eas Hence, we only discuss $l=s$. Consider an ordered vector basis \(\B^{r+s, r+1}\) for \(\ker d^{r+s-1,r}\times\LST_{r+s}\) given by
\begin{eqnarray*}
&\B^{r+s, r+1}:= \cup_{j=p}^{s} \left\lbrace E_{s-j, j}\mathbf{e}^{1}_{r+1}\right\rbrace  \cup_{k=0, j=0, i=1}^{s, r+k, 2}\left\lbrace E_{r+k-j, j}\mathbf{e}^{k+r-s+1}_{r+1}, \Theta_{k-j,j}^i\mathbf{e}^{k+r-s+1}_{r+1}\right\rbrace, &
\end{eqnarray*} where the ordering \(\prec\) is uniquely determined by the following rules: (1) The basis terms of lower grades precede higher grade basis terms, (2) Rotational terms succeed Eulerian terms of the same grade, (3) \(\Theta^1\)-terms precede \(\Theta^2\)-terms of the same grade, (4)
\(E_{k-j, j}\mathbf{e}\prec E_{k-m, m}\mathbf{e}\) and \(\Theta^{i}_{k-j, j}\mathbf{e}\prec \Theta^{i}_{k-m, m}\mathbf{e}\) when \(j<m\) for \(i=1, 2,\) \(k\geq m\) and the corresponding standard basis vector \(\mathbf{e}.\) A ordered basis for \(\LST_{r+s} \) is given by
\begin{eqnarray*}
\B_{r+s}:=\lbrace E_{r+s,0}, E_{r+s-1,1}, \cdots , E_{0,r+s} \rbrace, \quad E_{r+s-i, i} \prec E_{r+s-j, j} \quad \hbox{ when } i\prec j.
\end{eqnarray*}
Then, the matrix representation of \(d^{r+s,r+1}\) with respect to \((\B^{r+s, r+1}, \prec)\) and \((\B_{r+s}, \prec)\) is given by
\be\label{drs}
\left[d^{r+s,r+1}\right]_{\B^{r+s, r+1}, \B_{r+s}}=
\begin{bmatrix}
2(s-r)\mathcal{M}_{r}^{s}   & 2(r-s)\mathcal{M}_{s}^{r}   & \0      \\
\0                          & \0                          & \0
\end{bmatrix}.
\ee The first two columns of block matrices in equation \eqref{drs} are associated with \(E\)-term transformation generators of grade \(r\) and \(s\), respectively. Similarly, the third and fourth columns of block-matrices correspond to \(\Theta\)-terms of grade \(r\) and \(s\). Now assume that \(q<p.\) Then,
\bes
\Pi({\rm im}\, d^{r+s,r+1}+\rad\,\LST)\cap \Span \{E_{r+s-j,j}\,|\, 0\leq j \leq q-1\}=\{\0\}.
\ees Since $b_{r-q,q}\neq 0,$ terms of the form $b_{r+s-j,j}E_{r+s-j,j}$ for $q\leq j\leq p-1 $ can be simplified through
\bas
&d^{r+s, r+1}\left(\sum_{j=0}^{p-q-1}c_{s-j,j}E_{s-j,j}, \0_{r+s}\right)= -\sum_{j=q}^{p-1} b_{r+s-j,j}E_{r+s-j,j}&
\eas
where \( c_{s, 0}:= \frac{b_{r+s-q,q}}{2(r-s)b_{r-q,q}},\) and $c_{s-j,j}$ for $1\leq j \leq p-q-1$ follows the recursive equations
\begin{eqnarray}\label{recursive relations}
&c_{s-j,j}:= \frac{b_{r+s-q-j,j}-\sum_{i=0}^{j-1}c_{s-i, i} b_{r-q-j+i,q+j-i}}{2(r-s)b_{r-q,q}}.&
\end{eqnarray} Now omit all zero-block sub-matrices along with the first $p$ rows and the first $p-q$ columns of matrix representation $d^{r+s,r+1}$ to obtain
\ba\label{SubMatrices}
&\begin{bmatrix}
A   &   B\\
C   &   D
\end{bmatrix}&
\ea where \(A, B, C, D\) are matrices of sizes \((r+1)\times (s-p+q+1),\) \( (r+1)\times(r+1),\) \((s-p)\times(s-p+q+1),\) and \((s-p)\times(r+1),\) respectively. The matrix \(B\) is a lower triangular matrix where the entries on the main diagonal are constant and equal to \(b_{s-p,p}.\) Since \(b_{s-p,p}\neq 0,\) \(B\) is invertible and
\begin{equation*}
\begin{bmatrix}
B^{-1}    &  0_{(r+1)\times(s-p)}\\
-D B^{-1} &  I_{(s-p)\times(s-p)}
\end{bmatrix}
\begin{bmatrix}
A  &  B\\
C  &  D
\end{bmatrix}=
\begin{bmatrix}
B^{-1}A       &    I_{(r+1)\times (r+1)}\\
-D B^{-1}A+C  &    \0_{(s-p)\times(r+1)}
\end{bmatrix}.
\end{equation*}
Let
\be\label{drankD} \dd_s:= \rank (C-D B^{-1}A).\ee The matrix \(C-D B^{-1}A\) plays a similar role to the Schur complement of a block in Gaussian elimination of a block matrix. The index \(s\) stands for consistency with \(\dd_l\) in equation \eqref{ul1}. Given the matrix representation for \(d^{r+s, r+1}\), \(d+r+1\)-terms associated with the first rows of the matrix \eqref{SubMatrices} are simplified, \ie \bes\{E_{r+s-j,j}\,|\, p \leq j \leq p+r+\dd_s\}\subseteq \Pi({\rm im}\, d^{r+s,r+1}+\rad\,\LST).\ees
Thus, we can simplify \(E_{r+s-j, j}\)-terms for \( r+p+1\leq j\leq r+p+\dd_s\) in the \(r+1\)-level. Since there are \(s-p\)-rows in the matrix \(C,\) we have \(\dd_s\leq s-p.\) For \(\dd_s<s-p,\) all terms \(E_{r+s-j, j}\) for \(r+p+\dd_s<j\leq r+s\) can still appear the \(r+1\)-level normal form. When \(\dd_s=s-p,\) \(E_{r+s-j,j}\)-terms for $p\leq j\leq r+s$ do not appear in the \((r+1)\)-level hypernormalization step.

Let \(q\geq p.\) The case \(q=p\) occurs only when \(r=2s\). Since \(b_{s-p, p}\neq 0\), for \(p\leq j\leq q-1\) we can directly simplify \(E_{r+s-j,j}\)-terms in the \(r+1\)-level using the first \(q\) rows of the representation matrix. However, note that the first \(p\)-rows are zero row vectors. Hence, we eliminate the first \(q\) rows of block matrix \(\left[2(s-r) \mathcal{M}_{r}^{s}\quad 2(r-s)\mathcal{M}_{s}^{r} \right]\) and the first \(q-p\) columns of sub-block matrix \(2(r-s)\mathcal{M}_{s}^{r}\) in the \(r+1\)-level map. Then, we obtain a blocked matrix of type \eqref{SubMatrices} where \(A, B, C, D\) are matrices of sizes \( (r-q+p+1)\times(s+1),\) \((r-q+p+1)\times(r-q+p+1),\) \((s-p)\times(s+1),\) and \((s-p)\times (r-q+p+1).\) The matrix \(B\) is a lower triangular matrix and the entries on its diagonal are the constant \(b_{s-p,p}.\) Therefore, \(B\) is invertible and
\begin{equation*}
\begin{bmatrix}
B^{-1}      &    0_{(r-q+p+1)\times(s-p)}\\
-D B^{-1}   &    I_{(s-p)\times(s-p)}
\end{bmatrix}
\begin{bmatrix}
A       &       B\\
C       &       D
\end{bmatrix}=
\begin{bmatrix}
B^{-1}A      &   I_{(r-q+p+1)\times(r-q+p+1)} &  \\
-DB^{-1}A+C & 0_{(s-p)\times(r-q+p+1)} &
\end{bmatrix}.
\end{equation*} Given \(\dd_s\) defined by equation \eqref{drankD}, terms of the form \(E_{r+s-j, j}\) for \(p\leq j\leq r+p+d\) are simplified while \(E_{r+s-j, j}\)-terms for \(0\leq j\leq p-1\) and \(r+p+d+1\leq j\leq r+s\) may appear in the \(r+1\)-th level normal form. When \(\dd_s=s-p,\) all \(E_{s+r-j,j}\)-terms for $p\leq j\leq s+r$ are simplified in the \((r+1)\)-level.
\end{proof}


\begin{exm}
The map \(d^{r+s,r+1}\) does not always have a full rank. For instance, let
\begin{eqnarray*}
v^{(3)}:= v_0 + a_{2, 0} E_{2, 0}+ a_{1, 1} E_{1, 1}+ a_{0, 2} E_{0, 2}+ a_{1, 2} E_{1, 2}+ a_{0, 3}E_{0, 3}+ \cdots,
\end{eqnarray*} where \(s=2,\) \(r=3,\) \(p=0,\) \(a_{3, 0}\) and \(a_{2, 1}\) are simplified in the third level normalization step. Thus, we have \(a_{2, 0}\neq0\) and \((a_{1, 2}, a_{3, 0})\neq (0, 0).\) Let \(a_{2, 0}= a_{1, 1}= 4a_{0, 2}=a_{0, 3}=\frac{a_{1, 2}}{2}= 1,\) and \(q=2.\) Thereby after removing the zero blocks, \(\left[d^{5 ,4}\right]_{\B^{5, 4}, \B_{5}}\) is given by
\begin{equation*}
\begin{bmatrix}
a_{2, 0} & 0          & 0        & 0         & 0        & 0        & 0 \\
a_{1, 1} & a_{2, 0}   & 0        & 0         & 0        & 0        & 0 \\
a_{0, 2} & a_{1, 1}   & a_{2, 0} & 0         & a_{1, 2} & 0        & 0      \\
0        & a_{0, 2}   & a_{1, 1} & a_{2, 0}  & a_{0, 3} & a_{1, 2} & 0 \\
0        & 0          & a_{0, 2} & a_{1, 1}  & 0        & a_{0, 3} & a_{1, 2} \\
0        & 0          &   0      & a_{0, 2}  & 0        & 0        & a_{0, 3}
\end{bmatrix},\quad \begin{bmatrix}
A       &       B\\
C       &       D
\end{bmatrix}=
\begin{bmatrix}
a_{2, 0} & 0       &|  & a_{1, 2} & 0        & 0      \\
a_{1, 1} & a_{2, 0}&|  & a_{0, 3} & a_{1, 2} & 0 \\
a_{0, 2} & a_{1, 1}&|  & 0        & a_{0, 3} & a_{1, 2} \\
-        &-        &|  & -  & -   & -\\
0      & a_{0, 2}  &|  & 0        & 0        & a_{0, 3}
\end{bmatrix},
\end{equation*} \(C-DB^{-1}A=[0\; 0]\), \(\dd_2=0,\) and \(\rank\left[d^{5 ,4}\right]_{\B^{5, 4}, \B_{5}}= 5\). When \(a_{0, 3}\neq0\) and \(a_{1, 2}=0,\) \(A= [a_{2, 0}\; a_{1, 1}\; a_{0, 2}]^T,\) \(B= a_{0, 3}I_{3\times 3}\) (a three by three diagonal matrix), \(q=3\) and \(\rank\left[d^{5 ,4}\right]_{\B^{5, 4}, \B_{5}}=6\).
\end{exm}

\begin{prop}\label{3.7Rem} Assume that the rank condition \eqref{drankD} and the hypothesis of Theorem \ref{InftLevel} hold. When either \(p<q\) or \(r\neq 2s\) holds, \(r+2\leq \rank\, d^{r+s,r+1}\leq r+s+1\) while \(2s+1\leq \rank\, d^{3s,2s+1}\leq 3s+1.\) Furthermore, \(q>p+r-s\) when \(r\neq 2s\) and \(q\geq p.\)
\end{prop}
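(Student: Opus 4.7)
The plan is to combine the block-matrix representation of $d^{r+s,r+1}$ from Theorem~\ref{InftLevel} (equations~\eqref{drs} and~\eqref{SubMatrices}) with the structural constraint on $v^{(s+1)}$ furnished by Lemma~\ref{s+1level}. The upper bounds are immediate from dimension counting: $\B_{r+s}=\{E_{r+s-j,j}:0\leq j\leq r+s\}$ has $r+s+1$ elements, so $\rank d^{r+s,r+1}\leq r+s+1$, and this specializes to $3s+1$ when $r=2s$.

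I would first dispatch the ``Furthermore'' assertion. By Lemma~\ref{s+1level}, for $l=r-s\neq s$ the surviving $E$-coefficients at grade $r=l+s$ are confined to the index set $\{0,\ldots,p-1\}\cup\{r-s+p+1,\ldots,r\}$. Since $q$ is the minimum index with $b_{r-q,q}\neq 0$, the hypothesis $q\geq p$ forces $q$ into the upper piece, giving $q\geq r-s+p+1$, equivalently $q>p+r-s$.

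For the lower rank bound $r+2\leq\rank d^{r+s,r+1}$, I would split into two cases along the lines of Theorem~\ref{InftLevel}. In the case $q<p$, the block decomposition produces $p-q\geq 1$ pivots from direct simplification of $E_{r+s-j,j}$ for $q\leq j\leq p-1$, together with an invertible lower-triangular $B$ of size $(r+1)\times(r+1)$, whence $\rank d^{r+s,r+1}\geq(p-q)+(r+1)+\dd_s\geq r+2$. In the case $r\neq 2s$ with $q\geq p$ (so necessarily $q>p$, since $q=p$ forces $r=2s$), the analogous block count yields only $(q-p)+(r-q+p+1)+\dd_s=r+1+\dd_s$; the desired bound therefore reduces to $\dd_s\geq 1$. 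To establish this, I would identify $E_{k-l,l}$ with the monomial $u^{k-l}v^{l}$ in $\mathbb{R}[u,v]$, where $u=x_1^2+y_1^2$ and $v=x_2^2+y_2^2$. Under this identification the image of $d^{r+s,r+1}$ projected onto $\B_{r+s}$ is the degree-$(r+s)$ component of the ideal $(v_s,v_r)\subset\mathbb{R}[u,v]$, whose dimension equals $r+s+1-\deg\gcd(v_s,v_r)$. Hence it suffices to show $v_s\nmid v_r$: if $v_r=v_s\cdot h$ with $\deg h=r-s$, the minimum $v$-exponent of $v_r$ equals $p+p_h$, where $p_h$ is the analogous minimum for $h$, so $p_h=q-p$; but the ``Furthermore'' claim forces $p_h\geq r-s+1>\deg h$, a contradiction. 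Therefore $\deg\gcd(v_s,v_r)\leq s-1$ and $\rank d^{r+s,r+1}\geq r+2$.

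Finally, when $r=2s$, the invertibility of $B$ of size $r+1=2s+1$ together with the direct simplifications always contributes $r+1$ pivots, giving $2s+1\leq\rank d^{3s,2s+1}\leq 3s+1$ regardless of the relative sizes of $p$ and $q$. The main obstacle is that the purely linear-algebraic Schur-complement count from the block decomposition supplies only $r+1$ pivots in the $q\geq p$ regime, so improving the bound to $r+2$ when $r\neq 2s$ genuinely requires the support constraint from Lemma~\ref{s+1level}, repackaged as the polynomial-divisibility obstruction $v_s\nmid v_r$ described above.
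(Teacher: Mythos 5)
The paper states Proposition~\ref{3.7Rem} without any proof, so there is nothing to compare your argument against; judged on its own it is essentially sound, and your central device is the right one. Identifying $E_{k-l,l}$ with $u^{k-l}v^{l}$ turns the nonzero part of \eqref{drs} into the multiplication map $(A,B)\mapsto v_rA+v_sB$ on forms of degrees $s$ and $r$, so $\rank d^{r+s,r+1}=\dim (v_s,v_r)_{r+s}=r+s+1-\deg\gcd(v_s,v_r)$; the upper bounds are then immediate, the bound $r+1$ is automatic from $\deg\gcd\leq s$, and improving it to $r+2$ is exactly the assertion $v_s\nmid v_r$. Your derivation of the ``Furthermore'' clause from the support constraint of Lemma~\ref{s+1level} at grade $r=(r-s)+s$ with $r-s\neq s$ is correct, and feeding $q>p+r-s$ back into the divisibility obstruction ($p_h=q-p>r-s=\deg h$) is precisely what closes the $q>p$, $r\neq 2s$ case; the Schur-complement counts in the $q<p$ case agree with the block sizes recorded in the proof of Theorem~\ref{InftLevel}, and the $r=2s$ bound $2s+1$ falls out of $\deg\gcd\leq s$.

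One point you should not pass over silently: your two cases are $q<p$ and ($q\geq p$, $r\neq 2s$), whose union is ``$q<p$ or $r\neq 2s$,'' whereas the proposition as printed hypothesizes ``$p<q$ or $r\neq 2s$.'' These differ on the set $\{p<q,\ r=2s\}$, which your argument does not cover --- and cannot, because the bound $r+2$ genuinely fails there: take $s=2$, $v_s=u^2+uv$ (so $p=0$), all grade-$3$ coefficients zero (admissible, since Lemma~\ref{s+1level} leaves only $j\in\{2,3\}$ at grade $3$ and these may vanish), and $v_r=uv\cdot v_s$ at grade $r=4=2s$ (grade-$2s$ coefficients are unconstrained), so that $q=1>p$; then $\gcd(v_s,v_r)=v_s$ and the rank is $r+1=5<r+2$. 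So the printed hypothesis must be read as ``$p>q$ or $r\neq 2s$'' --- consistent with how the proposition is invoked later, e.g.\ Proposition~\ref{cor4.4} pairs the full-rank condition with $p>q$ --- and your proof establishes exactly that corrected statement. Say so explicitly rather than letting the case split quietly perform the correction; as written, a reader checking your cases against the stated hypothesis will think you have omitted the case $p<q$, $r=2s$.
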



\begin{thm}\label{InfNFCor} Consider equations \eqref{sp} and \eqref{rq}. Assume that \(\rank\left[d^{r+s,r+1}\right]_{\B_s\cup \B_r, \B_{r+s}}=r+s+1\). Then, \((r+1)\)-th level normal form of \(v^{(1)}\) is given by
\begin{eqnarray*}
v^{(r+1)}=&v_0+\sum_{j=p}^{s} b_{s-i, i}E_{s-i,i} +\sum_{i+j=2s}b_{i,j}E_{i,j}+\sum_{l+j= 1, i=1}^{s, 2}a^i_{l, j}\Theta_{l, j}^i
\qquad\qquad\qquad\qquad\quad&\\&+\sum_{l=1, i=1, j=-l, j=p+1}^{\infty, 2, p-l-1, s}a^i_{s-j, j+l}\Theta_{s-j, j+l}^i+\sum_{\substack{ l=0\\l\neq s, 2s-r}}^{\infty}\sum_{j=-l, j=p+r-s+1}^{p-l-1, r}b_{r-j, j+l}E_{r-j, j+l}.&
\end{eqnarray*} Furthermore, \(v^{(r+1)}\) constitutes the infinite level normal form of \(v^{(1)}.\) There does not exist any nontrivial nonlinear symmetry transformation generator within \(\LST\) associated with \(v^{(\infty)}=v^{(r+1)}.\)
\end{thm}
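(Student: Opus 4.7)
The plan is to attack the theorem in three stages, each leveraging the machinery already established in the paper.

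First, I would derive the explicit form of $v^{(r+1)}$ by specializing Theorem \ref{InftLevel}. The hypothesis $\rank\bigl[d^{r+s,r+1}\bigr]=r+s+1$ forces the Schur complement $C-DB^{-1}A$ introduced in the proof of Theorem \ref{InftLevel} to have full row rank $s-p$, i.e.\ $\dd_s=s-p$. Substituting $\dd_s=s-p$ into the statement of Theorem \ref{InftLevel} collapses the interval $\{p+r+\dd_s+1,\dots,r+s\}$ to the empty set, so every Eulerian term $E_{r+s-j,j}$ with $p\leq j\leq r+s$ is eliminated in the $(r+1)$-th level. Applying the change of summation variable $j\mapsto j-l$ to the surviving $E$- and $\Theta^i$-terms then reproduces exactly the claimed formula.

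Second, to upgrade $v^{(r+1)}$ to $v^{(\infty)}$, I would argue by induction on $r'\geq r+1$ that the complement spaces $\mathcal{C}^{k,r'}$ stabilize, i.e.\ $\IM d^{k,r'}=\IM d^{k,r+1}$ for every $k$. By \eqref{TransOrder}, a transformation generator admissible at level $r'$ for grade $k$ lies in $\ker d^{k-1,r'-1}$, and by Proposition \ref{structure constant 1} its bracket with the grade-$\geq 1$ pieces of $v^{(r+1)}$ can only produce terms already in $\IM d^{k,r+1}$ modulo the radical: the Eulerian content of $v^{(r+1)}$ is confined to grades $s$, $2s$, and the saturated sets identified in the first stage, while $\sum_j b_{s-j,j}E_{s-j,j}$ with $b_{s-p,p}\neq 0$ keeps its adjoint action surjective onto the same image. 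The reduction via the radical Lie ideal recorded in \eqref{TwoEqs} then rules out new rotational eliminations, so the complement spaces stabilize and $v^{(\infty)}=v^{(r+1)}$.

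Third, for the nonlinear symmetry claim, a nontrivial generator would be an $S\in[\LST,\LST]$ with $[S,v^{(\infty)}]=0$. Writing $S$ as a formal sum of $E_{m,n}$ and $\Theta^i_{m,n}$ with real coefficients and expanding via Proposition \ref{structure constant 1}, I would match the lowest-grade components of $[S,v_0]$ first, which forces the $\Theta$-content of $S$ at the lowest nontrivial grade to vanish. The equation $[S_E,\sum_j b_{s-j,j}E_{s-j,j}]=0$ then pins down the Eulerian content: the same block-matrix/Schur-complement injectivity that underpinned the full-rank condition $\rank=r+s+1$ in the first stage, now read off in the commutant direction, recursively forces every Eulerian coefficient of $S$ to vanish, and a parallel argument restricted to the radical Lie ideal eliminates the remaining $\Theta$-coefficients. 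The main obstacle I expect is the induction in the second stage: certifying that at every level $r'>r+1$ and grade $k\geq r+s+1$ the iterated Lie brackets in $d^{k,r'}$ do not re-open an elimination channel via the higher-grade Eulerian pieces $b_{r-j,j+l}E_{r-j,j+l}$ surviving in $v^{(r+1)}$; the reduction to $\LST/\rad\LST$ together with the full-rank condition at level $r+1$ should control this, but turning the heuristic into a clean induction is the substantive part of the work.
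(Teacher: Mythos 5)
Your first stage is acceptable in spirit: full rank \(r+s+1\) means \(d^{r+s,r+1}\) is surjective onto the Eulerian part of grade \(r+s\), which is what the paper achieves more directly by deleting the \((s+1)\)-th column (Proposition \ref{3.7Rem}), inverting the resulting square matrix \(\left[-\hat{\mathcal{M}_r^s}\ \ \mathcal{M}_s^r\right]\), and reading off the transformation coefficients explicitly. The second stage, however, is where the theorem actually lives, and your proposal asserts rather than proves it. The statement that brackets of admissible generators with the nonlinear pieces of \(v^{(r+1)}\) ``can only produce terms already in \(\IM d^{k,r+1}\)'' is exactly the claim \(d^{l+r+1,l+r+1}(S)\in{\rm im}\,d^{l+r+1,r+1}\), and the paper's proof of it rests on two concrete ingredients absent from your plan: an explicit description of \(\ker d^{l+r,l+r}\) (equation \eqref{kerlr}), which contains the distinguished element \((\0,v_s,\0,v_r,v_{r+1},\ldots,v_{l+r-s},\0_s)\) assembled from the normal form itself, and the antisymmetry cancellation \(\sum_{k=r}^{l+1}[v_k,v_{l+r-k+1}]=0\), which collapses \(d^{l+r+1,l+r+1}(S)\) to \(\left[S_{l+r-s+1},v_s\right]-\alpha\left[v_{l+r-s+1},v_s\right]\in{\rm im}\,d^{l+r+1,s+1}\). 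You concede that turning your heuristic into an induction is ``the substantive part of the work''; it is, and it is missing.

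Your third stage is not merely incomplete but based on a false premise. Inside \(\LST\) the linear part \(v_0=\omega_1\Theta^1_{0,0}+\omega_2\Theta^2_{0,0}\) is central: by the structure constants \eqref{Struc}, \([\Theta^i_{0,0},E_{k,l}]=2\cdot 0\cdot\Theta^i_{k,l}=0\) and \([\Theta^i_{0,0},\Theta^j_{k,l}]=0\). Hence \([S,v_0]\equiv 0\) for every \(S\in\LST\) and cannot ``force the \(\Theta\)-content of \(S\) to vanish.'' Moreover, the conclusion that every coefficient of \(S\) is recursively forced to zero would prove too much: \(S=v^{(\infty)}-v_0\) commutes with \(v^{(\infty)}\), lies in \([\LST,\LST]\), and has \(b_{s-p,p}\neq0\), so nonzero solutions exist. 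What must be shown is that the commutant consists only of such trivial generators, and the paper does this by a different mechanism: it takes the filtration-topology limit of \(\ker d^{l+r,l+r}\) as \(l\to\infty\); the \(E\)- and \(\Theta\)-spans tend to zero while the line \(\mathbb{R}(\0,v_s,\0,v_r,\ldots)\) survives and generates exactly \(v^{(\infty)}-v_0\). Your commutant computation would need to be reorganized around this fact before it could close the argument.
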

\begin{proof} By Proposition \ref{3.7Rem}, we obtain an invertible matrix by removing the \(s+1\)-th column of the matrix representation \(\left[d^{r+s,r+1}\right]_{\B_s\cup \B_r, \B_{r+s}}\). Hence, we remove the last column of \(\mathcal{M}_{r}^{s}\) and denote it by \(\hat{\mathcal{M}_{r}^{s}}.\) Then, we introduce the transformation coefficients by
\begin{eqnarray*}
&(c_{s, 0}, c_{s-1, 1}, \cdots, c_{1, s-1}, c_{r, 0},\cdots, c_{0, r})^t:= \frac{1}{2(r-s)}\left[ -\hat{\mathcal{M}_{r}^{s}}\quad \mathcal{M}_{s}^{r} \right]^{-1}(b_{r+s, 0},\cdots, b_{0, r+s})^t.&
\end{eqnarray*} Therefore,
\bas
&d^{r+s, r+1}\left(\sum_{j=0}^{s}c_{s-j,j}E_{s-j,j},\0, \sum_{j=0}^{r}c_{r-j,j}E_{r-j,j}, \0\right)= -\sum_{j=0}^{r+s} b_{r+s-j,j}E_{r+s-j,j},&
\eas and all \(E_{r+s-j,j}\)-terms for $0\leq j\leq r+s$ are simplified in the \((r+1)\)-level hyper normalization step.

Let \(v^{(r+1)}= v_0+ v_s+ v_r+ v_{2s}+ h.o.t.,\) \(v_i\in \LST_i.\) We show that for \(l\geq 1,\)
\begin{equation}\label{infinite relation}
 d^{l+r+1, l+r+1}(S)=d^{l+r+1, r+1}(S_{l+1},S_{l+2}, \cdots , S_{l+r+1})
 \end{equation} where $S:=\left( S_{1},S_{2}, \cdots , S_{l+r+1}\right)$. However, for \(s\leq l,\)
\begin{eqnarray}\nonumber
&\ker d^{l+r,l+r}=\mathbb{R}(\0, v_{s}, \0, v_{r}, v_{r+1}, \cdots, v_{l+r-s}, \0_{s})+\oplus_{k=0, j=0}^{s-1, l+r-k}\mathbb{R}E_{l+r-k-j,j}\mathbf{e}^{l+r-k}_{l+r}&\\\label{kerlr}&
+\oplus_{k=0, i=1, j=0}^{s-1, 2, l-k}\mathbb{R}\Theta_{l-k-j,j}^i\mathbf{e}^{r+k}_{l+r}.\qquad\qquad\qquad\quad&
\end{eqnarray} Since \(\left(S_{1}, S_{2}, \cdots , S_{l+r} \right)\in \ker d^{l+r,l+r},\) \(d^{l+r+1,l+r+1}(S)\) for \(l\geq s\) is given by
\begin{eqnarray*}
&\left[S_{l+r-s+1},v_{s} \right]+\alpha\left[v_{s},v_{l+r-s+1} \right]+\alpha\sum_{k=r}^{l+1}\left[v_{k},v_{l+r-k+1} \right]=\left[S_{l+r-s+1},v_{s} \right]-\alpha\left[v_{l+r-s+1},v_{s} \right].\qquad\qquad\qquad\qquad\qquad\qquad&
\end{eqnarray*} The latter belongs to \({\rm im}\, d^{l+r+1,s+1}\). The equality here is followed from
\bas
&\sum_{k=r}^{l+1}\left[v_{k}, v_{l+r-k+1} \right]=\dfrac{1}{2}\sum_{k=r}^{l+1}\left( \left[v_{k},v_{l+r-k+1} \right]+\left[v_{l+r-k+1}, v_{k}\right]\right)=0.&
\eas For \(s>l+1,\) \(d^{l+r+1,l+r+1}(S)=\left[S_{l+r-s+1}, v_s \right] \subseteq {\rm im}\, d^{l+r+1,s+1}.\) When \(s=l+1,\)
\bes d^{l+r+1,l+r+1}(S)= \left[S_{r}, v_s\right]+\left[S_{s}, v_r\right]\subseteq {\rm im}\,d^{l+r+1,r+1}.\ees

For sufficiently large values of \(l,\) we merely consider equation \eqref{kerlr}. Thereby, \(\ker d^{l+r,l+r}\) has three subspaces. On the one hand, the subspaces
\(\oplus_{k=0, j=0}^{s-1, l+r-k}\mathbb{R}E_{l+r-k-j,j}\mathbf{e}^{l+r-k}_{l+r}\) and
\(\oplus_{k=0, i=1, j=0}^{l, 2, l-k}\mathbb{R}\Theta_{l-k-j,j}^i\mathbf{e}^{r+k}_{l+r}\) converge to zero in filtration topology when \(l\) approaches to infinity. On the other hand, the limit of the space \(\mathbb{R}(\0, v_{s}, \0, v_{r}, v_{r+1}, \cdots, v_{l+r-s}, \0_{s})\) in the filtration topology generates the vector field \(v^{(\infty)}-v_0.\) Since \(v^{(\infty)}-v_0\) is the trivial nonlinear symmetry transformation generator for \(v^{(\infty)},\) the proof is complete.
\end{proof}
\begin{cor} Assume that ${b_{0,1}}^2 b_{2,0}-b_{0,1}b_{1,0}b_{1,1}+b_{0,2}{b_{1,0}}^2\neq 0$ hold for the first level normal form coefficients in equation \eqref{Eq2}. When \(b_{1,0}\neq 0,\) the infinite level normal form of $v^{(1)}$ is given by
\begin{eqnarray}\label{inf}
&\Scale[0.9]{v^{(\infty)}\!=\!v_0\!+\!\sum^{2,1}_{i=1, j= 0}\!a^i_{1-j,j}\Theta_{1-j,j}^i\!+\!\sum^2_{i+j=1}b_{i,j}E_{i,j}\!+\!\sum_{j\geq 2}\!\left( a^1_{0,j}\Theta_{0,j}^1\!+\!a^2_{0,j}\Theta_{0,j}^2\!+\!b_{0,j+2}E_{0,j+2} \right).}&
\end{eqnarray} For \(b_{1, 0}=0,\) \(b_{0,1}\neq 0,\) the infinite level normal form is given by
\begin{eqnarray}\label{inf3}
\!&\Scale[0.9]{v^{(\infty)}\!=\!v_0\!+\!\sum^{2,1}_{i=1, j= 0}\!a^i_{1-j,j}\Theta_{1-j,j}^i\!+\!\!b_{0,1}E_{0,1}
\!+\!\sum_{j\geq 2}\!\left( a^1_{j, 0}\Theta_{j, 0}^1\!+\!a^2_{j, 0}\Theta_{j, 0}^2\!+\!b_{j+2, 0}E_{j+2, 0} \right).}&
\end{eqnarray}
The infinite level normal form coefficients \(b_{i, j}\) for \(i+j\leq 2\) are the same as in equation \eqref{Eq2}. Furthermore, \(v^{(\infty)}\) does not have any nontrivial nonlinear symmetry transformation generator within \(\LST.\)
\end{cor}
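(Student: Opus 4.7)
The plan is to derive both \eqref{inf} and \eqref{inf3} as direct specializations of Theorem \ref{InfNFCor} (and of Theorem \ref{InftLevel} in the degenerate subcase), after identifying the invariants $(s,r,p,q)$ of \eqref{sp}--\eqref{rq} for each assumption and verifying the rank hypothesis $\mathrm{rank}\,[d^{r+s,r+1}]_{\B_s\cup \B_r,\B_{r+s}}=r+s+1$.

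In the generic case $b_{1,0}\neq 0$, the first-level normal form \eqref{Eq2} immediately forces $s=1$ and $p=0$. When $b_{1,0}\neq 0$ the scalar ${b_{0,1}}^2 b_{2,0}-b_{0,1}b_{1,0}b_{1,1}+b_{0,2}b_{1,0}^{2}$ is a genuine quadratic expression in the grade-$2$ coefficients, so its nonvanishing forces $(b_{2,0},b_{1,1},b_{0,2})\neq(0,0,0)$, hence $r=2$. Substituting $s=1,\ r=2,\ p=0$ into the display of Theorem \ref{InfNFCor}, the first three finite sums collapse to $\sum_{i+j=1}^{2}b_{i,j}E_{i,j}$ and $\sum_{i=1}^{2}\sum_{j=0}^{1}a^i_{1-j,j}\Theta^i_{1-j,j}$, while the two infinite tails, after the ranges $j=p+1=1$ and $l\neq s,2s-r$ are simplified, produce exactly $a^i_{0,j}\Theta^i_{0,j}$ and $b_{0,j+2}E_{0,j+2}$ for $j\geq 2$ and $i=1,2$. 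This is \eqref{inf}.

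In the degenerate subcase $b_{1,0}=0,\ b_{0,1}\neq 0$, one has $s=1,\ p=1$, and the genericity scalar collapses to $b_{0,1}^{2}b_{2,0}\neq 0$, which forces $b_{2,0}\neq 0$ and therefore $r=2,\ q=0$. Since $q<p$, I invoke Theorem \ref{InftLevel}: the Schur-complement rank \eqref{drankD} attains its maximum $\dd_s=s-p=0$ because $b_{2,0}\neq 0$, which eliminates all otherwise-surviving Eulerian terms outside $b_{0,1}E_{0,1}$ (and the grade-$2s$ block), and the two infinite tails now pick up the index $j=-l$, yielding $a^i_{j,0}\Theta^i_{j,0}$ and $b_{j+2,0}E_{j+2,0}$ for $j\geq 2$ and $i=1,2$. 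That is precisely \eqref{inf3}. The coefficients $b_{i,j}$ with $i+j\leq 2$ are those of \eqref{Eq2} because the first-level coefficients do not change under the $(r+1)$-st level transformations used here.

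The main technical step is the verification that the rank of $[d^{3,3}]_{\B_s\cup\B_r,\B_{r+s}}$ really equals $r+s+1=4$ in both subcases, since Theorem \ref{InfNFCor} applies only under that hypothesis. Using the block description \eqref{drs}, the nontrivial block $\bigl[-2\mathcal{M}_{2}^{1}\ \ 2\mathcal{M}_{1}^{2}\bigr]$ has four rows indexed by $E_{3,0},E_{2,1},E_{1,2},E_{0,3}$ and five columns coming from the grade-$1$ and grade-$2$ Euler generators; I would delete one redundant column (as prescribed in the proof of Theorem \ref{InfNFCor}) and expand the resulting $4\times 4$ determinant. The expected obstruction is organizational: the column to be deleted depends on whether $p=0$ or $p=1$, so one must show that in either subcase the resulting determinant reduces to a nonzero scalar multiple of ${b_{0,1}}^{2}b_{2,0}-b_{0,1}b_{1,0}b_{1,1}+b_{0,2}b_{1,0}^{2}$. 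Once this is confirmed, the uniqueness of the infinite-level coefficients and the absence of any nontrivial nonlinear symmetry transformation generator within $\LST$ follow immediately from the corresponding conclusions of Theorem \ref{InfNFCor}.
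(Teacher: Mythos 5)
Your proposal is correct and follows essentially the same route as the paper: identify $(s,p,r)=(1,0,2)$ (resp.\ $(1,1,2)$ with $q=0$) from the hypotheses, verify $\rank\, d^{3,3}=r+s+1=4$, and read the result off the displays of Theorems \ref{InftLevel} and \ref{InfNFCor}. The only difference is that you defer the $4\times4$ determinant check, which the paper settles by exhibiting the columns $(0,b_{2,0},b_{1,1},b_{0,2})^t$, $(0,b_{1,0},b_{0,1},0)^t$, $(0,0,b_{1,0},b_{0,1})^t$ and noting that their independence (together with a column of $\mathcal{M}^2_1$) is exactly the stated genericity condition; the determinant you describe does reduce to $b_{1,0}$ (resp.\ $b_{0,1}$) times that scalar, confirming your predicted reduction.
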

\begin{proof} 
Assume that \(b_{1,0}\neq 0\). Then  \(s=1\) and \( p=0\). Since ${b_{0,1}}^2 b_{2,0}-b_{0,1}b_{1,0}b_{1,1}+b_{0,2}{b_{1,0}}^2\neq 0$ and \(E_{2-j, j}\)-terms for \(0\leq j\leq2\) cannot be normalized by the second level normal form, we have \((s, r)=(1, 2).\) Now  by Theorem \ref{s+1level}, \(\IM\, d^{l, 2}= \Span\,\{ E_{l-j, j}, \Theta^i_{l-j, j}|\, 0\leq j\leq l-1\}\) for \(l>2\), and
\begin{eqnarray*}
&v^{(2)}=v_0\!+\!\sum^{2,1}_{i=1, j= 0}a^i_{1-j,j}\Theta_{1-j,j}^i\!+\!\sum^2_{i+j=1} b_{i,j}E_{i,j}\!+\!\sum^\infty_{j=2}\left( a^1_{0,j}\Theta_{0,j}^1+a^2_{0,j}\Theta_{0,j}^2\!+\!b_{0,j+1}E_{0,j+1} \right).&
\end{eqnarray*} Now by Proposition \ref{3.7Rem}, \(3\leq\rank\,d^{3, 3}\leq 4.\) Due to the condition \({b_{0,1}}^2 b_{2,0}-b_{0,1}b_{1,0}b_{1,1}+b_{0,2}{b_{1,0}}^2\neq 0\), the three column vectors \((0, b_{2, 0}, b_{1, 1}, b_{0, 2})^t,\) \((0, b_{1, 0}, b_{0, 1}, 0)^t\) and \((0, 0, b_{1, 0}, b_{0, 1})^t\) are linearly independent. Hence, each column of the matrix \(\mathcal{M}^1_2\) is linearly independent with column space of \(\mathcal{M}^2_1\). Therefore, \(\rank\, d^{3, 3}=4\) and Theorem \ref{InftLevel} implies that
\begin{eqnarray*}
&\IM\,  d^{l, 3}= \IM\,  d^{l, 2}+ \Span\{ E_{3-j, j}|\, 0\leq j\leq 3\} \quad \hbox{ for }\; l\geq3.&
\end{eqnarray*} Theorem \ref{InfNFCor} concludes that the third level normal form \eqref{inf} is an infinite level normal form.

Now consider the case \(b_{1, 0}=0,\) \(b_{0, 1}\neq 0\). Then \(p=1.\) Similar to the above argument and by Theorem \ref{s+1level},
\(\IM d^{l, 2}= \Span\{ E_{l-j, j}, \Theta^i_{l-j, j}| 1\leq j\leq l\}\) for \(l\geq3\) and \(v^{(2)}\) follows
\begin{eqnarray*}
&v_0+\sum^{2,1}_{i=1, j= 0}a^i_{1-j,j}\Theta_{1-j,j}^i+\sum^2_{i+j=1}b_{i,j}E_{i,j} +\sum_{j\geq 2}\left( a^1_{j, 0}\Theta_{j, 0}^1+a^2_{j, 0}\Theta_{j, 0}^2+b_{j+1, 0}E_{j+1, 0} \right).\quad&
\end{eqnarray*} By Proposition \ref{3.7Rem} and the fact that each column of the matrix \(\mathcal{M}^1_2\) is linearly independent with column space of \(\mathcal{M}^2_1,\) we have \(\rank\,d^{3, 3}=4.\) Next, Theorem \ref{InftLevel} concludes that  \(E_{3-j, j}\)-terms for \(0\leq j\leq 3\) are simplified in the third level normal form and the third level normal form \(v^{(3)}\) is given by equation \eqref{inf3}. Finally by Theorem \ref{InfNFCor}, \(v^{(3)}\) is an infinite level normal form.
\end{proof}

\begin{thm}[Normal form classification when \(r=\infty\)]\label{ThmRinfty} Consider equations \eqref{sp} and \eqref{rq}. Then, the following holds.
\begin{itemize}
\item[1.] When \(r=\infty\) and \(s<\infty,\) the \(s+1\)-level normal form  \(v^{(1)}\) is given by
\begin{eqnarray}\label{s+1-level r=infty}
&v_0\!+\!b_{s-p,p}E_{s-j,j}\!+\!\sum_{l=0, i=1, j=0, j=l+p+1}^{\infty, 2, p-1, l+s} a^i_{l+s-j, j}\Theta_{l+s-j, j}^i\!+\!\sum_{l+j= 1, i=1}^{s, 2}a^i_{l, j}\Theta_{l, j}^i.\;&
\end{eqnarray}
When \(\sum_{i=1, k=1, j=0}^{2, \infty, k}{a^i_{k-j, j}}^2=0,\) the equation \eqref{s+1-level r=infty}  is infinite level normal form. The time-one map flows associated with vector fields from \(\Span\{E_{s-j, j}| 0\leq j\leq s\}\) are nonlinear symmetry transformations in the symmetry group of \(v^{(\infty)}.\)
\item[2.] For \(r=s=\infty,\) there exist \(s_i, p_i\) so that the infinite level normal form is given by either \(v_0,\)
\begin{eqnarray}\label{s+1sinftys1<infty}
&v_0+a^1_{s_1-p_1, p_1}\Theta_{s_1-p_1, p_1}^1+\sum_{l=0, j=0, j=l+p_1+1}^{\infty, p_1-1, l+s_1} a^1_{l+s_1-j, j}\Theta_{l+s_1-j, j}^1+\sum_{i+j=1}^{\infty}a^2_{i, j}\Theta_{i, j}^2\qquad&
\ea for \(s_1<\infty,\) or
\ba\label{s+1s2<infty}
&
v^{(s_2+1)}=v_0+a^2_{s_2-p_2, p_2}\Theta_{s_2-p_2, p_2}^2+\sum_{l=0, j=0, j=l+p_2+1}^{\infty, p_2-1, l+s_2} a^2_{l+s_2-j, j}\Theta_{l+s_2-j, j}^2,\quad &
\end{eqnarray} when \(s_1=\infty\) and \(s_2<\infty.\) The time-one map flows associated with vector fields from \(\rad\, \LST\) constitute nonlinear symmetry transformations in the symmetry group of \(v^{(\infty)}.\)
\end{itemize}
\end{thm}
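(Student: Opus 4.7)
The plan is to treat each case as a variant of the framework already laid out for Lemma~\ref{s+1level} and Theorem~\ref{InftLevel}, using the structure constants \eqref{Struc} together with the Eulerian/radical splitting. The essential observation is that $r=\infty$ forces one of the two species of nonlinear terms (Eulerian, or one component of rotational) to be absent, so the corresponding block in the representation matrix is either empty or has been used up at the first relevant level; once this is noted, the normal form expressions follow by direct specialization of formulas already established.

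For the first case ($s<\infty$, $r=\infty$), I would plug $r=\infty$ into Lemma~\ref{s+1level}. The hypothesis says that no Eulerian coefficient $b_{m-j,j}$ with $m>s$ survives in $v^{(s+1)}$, so every sum in Lemma~\ref{s+1level} indexed by $l\neq s$ collapses to zero, and \eqref{s+1-level r=infty} is immediate. Under the additional hypothesis $\sum(a^i_{k-j,j})^2=0$ for $k\geq 1$, what remains is $v_0+\sum_{j=p}^s b_{s-j,j}E_{s-j,j}$. I would then argue that $\IM d^{k,r}$ contributes nothing eliminable at any $k\geq s+1$: Eulerian generators $E_{k,l}$ produce only purely Eulerian images of grade $k+l+s$ (via $[E,\sum bE]$) and purely rotational images of grade $k+l$ (via $[E,v_0]$), while rotational generators yield only further rotational images through $[\Theta,E]$; none of these match a term actually present in $v^{(\infty)}$, so no further simplification is possible. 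For the symmetry claim, $[E_{s-j,j},E_{s-j',j'}]=0$ by \eqref{Struc}, so the Eulerian terms of grade $s$ form an abelian family; the residual rotational commutator with $v_0$ is absorbed into the freely adjustable rotational coefficient of the normal form, so each such time-one flow preserves the normal-form class.

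For the second case ($r=s=\infty$) there are no nonlinear Eulerian terms and $v^{(1)}=v_0+\sum a^i_{m,n}\Theta^i_{m,n}$. Define $s_i,p_i$ as the $\Theta^i$-analogs of $s,p$. If $s_1=s_2=\infty$ no nonlinear terms exist and $v^{(\infty)}=v_0$. For $s_1<\infty$ I would mirror the proof of Lemma~\ref{s+1level} with $\Theta^1_{s_1-p_1,p_1}$ playing the pivot role: an Eulerian generator $E_{k,l}$ acting on an $a^1_{m,n}\Theta^1_{m,n}$ term yields $2(m+n)a^1_{m,n}\Theta^1_{k+m,l+n}$, and an analog of the recursion \eqref{RecurRelations1} eliminates every $\Theta^1_{l+s_1-j,j}$ with $p_1\leq j\leq l+p_1$; the $\Theta^2$ coefficients are never touched because $[\Theta^i,\Theta^j]=0$, so they remain free, producing \eqref{s+1sinftys1<infty}. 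For $s_1=\infty$, $s_2<\infty$ the same scheme is applied with $\Theta^2_{s_2-p_2,p_2}$ as pivot, but now $[E_{k,l},v_0]$ contains an unwanted $\Theta^1$ contribution, which must be tracked in the quotient $\LST/\rad\,\LST$ via the reduced differential $\hat d^{l+r,r+1}$ of the second reductional technique after \eqref{TwoEqs}; this yields \eqref{s+1s2<infty}. The symmetry claim is then immediate: since $v^{(\infty)}$ has no Eulerian part and every pair of rotational fields commutes, any element of $\rad\,\LST$ lies in $\ker\ad_{v^{(\infty)}}$, so its time-one flow is a symmetry.

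The main obstacle will be the subcase $s_1=\infty$, $s_2<\infty$: eliminating $\Theta^2$ coefficients with Eulerian generators forces a $\Theta^1_{k,l}$ contribution from $[E_{k,l},v_0]$ that is incompatible with the target form $\Theta^1=0$. The argument has to route through the quotient Lie algebra, retaining only the $\Theta^2$-component in the factor space, and then verify that the residual $\Theta^1$ components are consistently reabsorbed at each grade by admissible normal-form redefinitions. This is exactly the reduced-differential machinery articulated after \eqref{TwoEqs}, so I would invoke it rather than reinvent it; the remaining claims then follow from direct translations of Lemma~\ref{s+1level} together with the kernel-filtration argument used at the end of the proof of Theorem~\ref{InfNFCor}.
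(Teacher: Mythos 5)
Your overall strategy --- specialize Lemma \ref{s+1level}, pivot on the lowest-grade surviving block, check stabilization of the images $\im\, d^{k,k}$, and split along $\rad\,\LST$ --- is the same as the paper's, and the normal forms you arrive at are correct. But a concrete computational error runs through the proposal: you repeatedly assert that $[E_{k,l},v_0]$ has a nonzero rotational component. By the structure constants \eqref{Struc}, $[\Theta^i_{0,0},E_{k,l}]=2(0+0)\Theta^i_{k,l}=0$, so $E_{k,l}$ \emph{commutes} with $v_0$ for every $k,l$. This shows up three times. First, in item 1 you list ``purely rotational images of grade $k+l$ via $[E,v_0]$'' among the images of the homological maps; there are none. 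Second, your symmetry argument for item 1 invokes a ``residual rotational commutator with $v_0$'' being ``absorbed into the freely adjustable rotational coefficient,'' but under the hypothesis $\sum ({a^i_{k-j,j}})^2=0$ there are no rotational coefficients left, and none is needed: $[E_{s-j,j},v^{(\infty)}]=0$ outright, since $E_{s-j,j}$ commutes with $v_0$ and with every $E_{s-j',j'}$ (the grade difference is zero). Third, and most importantly, the ``main obstacle'' you identify in the subcase $s_1=\infty$, $s_2<\infty$ --- an unwanted $\Theta^1$ contribution from $[E_{k,l},v_0]$ forcing a detour through $\LST/\rad\,\LST$ --- does not exist. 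Since $[E_{k,l},v_0]=0$ and $[E_{k,l},\Theta^2_{m,n}]=-2(m+n)\Theta^2_{m+k,n+l}$ is purely a $\Theta^2$-term, the normalization in this subcase stays entirely inside the $\Theta^2$-component, and the paper handles it directly with no quotient construction. (You may be conflating this with the orbital setting of Section \ref{Sec4}, where the time-rescaling action $Z_{k,l}v_0=\omega_1\Theta^1_{k,l}+\omega_2\Theta^2_{k,l}$ genuinely does create $\Theta^1$-terms.)

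Two smaller points. In the subcase $s_1<\infty$ you claim the $\Theta^2$ coefficients ``are never touched because $[\Theta^i,\Theta^j]=0$''; the generators doing the work are Eulerian, and $[E_{k,l},\Theta^2_{m,n}]\neq 0$ for $(m,n)\neq(0,0)$, so those coefficients are certainly modified --- what is true is that the formal basis style keeps them in the complement space, so they survive as (altered) free coefficients of \eqref{s+1sinftys1<infty}. And to conclude ``infinite level'' you need the containments $\im\, d^{k,k}\subseteq \im\, d^{k,s+1}$ (resp.\ $\im\, d^{k,s_i+1}$) for all large $k$; the paper obtains these from the explicit kernel computations \eqref{kerlrrinft} and \eqref{Kers1}, whose filtration limits simultaneously deliver the symmetry claims. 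Your remark that ``no image matches a term actually present'' gestures at this but should be replaced by that stabilization argument.
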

\begin{proof}
Item 1. Theorem \ref{s+1level} results the \(s+1\)-th level normal form follows equation \eqref{s+1-level r=infty}. Let \({a^i_{k-j, j}}=0\) for all indices. When \(l<s\),
\begin{eqnarray*}
&\ker d^{s+l, s+l} =  \sum_{k=1, j=0}^{s, l+k} \mathbb{R}(\0_{l},\0_{k-1}, E_{l+k-j, j},\0_{s-k})+\sum_{k=0, j=0}^{l, k} \mathbb{R}(\0_{s-1},\0_{k}, \Theta^i_{k-j, j},\0_{l-k}).&
\end{eqnarray*}
If \(l\geq s\),
 \begin{eqnarray}\label{kerlrrinft}
&\Scale[0.95]{\ker\,d^{s+l, s+l}= \sum_{k=1, j=0}^{s, l+k} \mathbb{R}(\0_{l},\0_{k-1}, E_{l+k-j, j},\0_{s-k})+ \sum_{j=0}^{s} \mathbb{R}(\0_{s-1}, E_{s-j, j},\0_{l-s},\0_s)}&\nonumber\\
&\sum_{k=1, j=0}^{s, l-s+k} \mathbb{R}(\0_{l},\0_{k-1}, \Theta^i_{l-s+k-j, j},\0_{s-k})&
\end{eqnarray}

By choosing \((\0_l, S^s_{l+1}, \cdots, S^s_{l+s})\in\ker\,d^{s+l, s+l}\) for \(l< s\) and \(c_{l+1-j, j}=0\) for all \(0\leq j\leq l+1\) in equation \eqref{Yslk}, we have
\begin{eqnarray*}
&\Scale[0.91]{d^{s+l+1, s+l+1}(\0_l, S^s_{l+1},\cdots, S^s_{l+s+1} )\!=\!\left[\sum_{j=0,i=1}^{l-s+1, 2}\!d^i_{l-s+1-j, j}\Theta^i_{l-s+1-j, j}, \sum_{j=p}^{s}\!a_{s-j, j}E_{s-j, j}\right]\!\in\! \im\, d^{s+l+1, s+1}.}&
\end{eqnarray*} For the case \(l\geq s\)
\begin{eqnarray*}
&\Scale[0.93]{d^{s+l+1, s+l+1}(\0_{s-1},S^s_s,\0_{l-s}, S^s_{l+1},\cdots, S^s_{l+s+1} )=\left[\sum_{j=0,i=1}^{l-s+1, 2}d^i_{l-s+1-j, j}\Theta^i_{l-s+1-j, j}, \sum_{j=p}^{s}a_{s-j, j}E_{s-j, j}\right].}&
\end{eqnarray*} The latter belongs to \(\im\, d^{s+l+1, s+1}.\) The argument for nonlinear transformation generators for the symmetry group of \(v^{(\infty)}\) is similar to the proof in Theorem \ref{InfNFCor}. The filtration topology-limit of the space \(\ker\,d^{s+l, s+l-1}\) given by equation \eqref{kerlrrinft} equals
\bas &\sum_{j=0}^{s} \mathbb{R}(\0_{s-1}, E_{s-j, j},\0_{l}).&\eas
Then, the time-one map flow associated with \(E_{s-j, j}\) commutes with that of \(v^{(\infty)}.\) This concludes the proof.

Item 2. For \(i=1, 2,\) define
\begin{eqnarray}\label{s1p1s2p2}
s_{i}:=\min \left\lbrace m\geq 1 \vert\, \exists\, j\leq m,\ a^i_{m-j,j}\neq 0 \right\rbrace,\quad p_i:=\min \left\lbrace j \vert\, a^i_{s_i-j,j}\neq 0 \right\rbrace.
\end{eqnarray} Hence,
\begin{eqnarray}
&\im\,d^{s_1+l+1, s_1+1}=\Span_{0\leq j\leq l+1}\{[E_{l+1-j,j}, \sum_{j=p_1}^{s_1}\Theta^1_{s_1-j, j}]\}.&
\end{eqnarray} Therefore, the \(s_1+1\)-th level normal form of \(v^{(1)}\) is expressed by equation \eqref{s+1sinftys1<infty} where \(\Theta^1_{l+s_1+1-j, j}\)-terms for \(p_1\leq j\leq l+p_1\) are simplified. Now we have
\begin{eqnarray}\label{Kers1}
 &\Scale[0.88]{\ker d^{s_1+l, s_1+l}\!=\!\sum_{k=0,j=0}^{s_1,l+k}\!\mathbb{R}E_{l+k-j, j}\mathbf{e}^{l+k}_{s_1+l}\!+\!\sum_{k=1,j=0}^{l+s_1,k}\!\mathbb{R}\Theta^1_{k-j, j}\mathbf{e}^{k}_{l+s_1}\!+\!\sum_{k=1, j=0}^{l,k-s_1}\!\mathbb{R}\Theta^2_{k-j, j}\mathbf{e}^{s_1+k}_{l+s_1}}&
\end{eqnarray} and this gives rise to
\begin{eqnarray*}
  &\im\,d^{s_1+l+1, s_1+l+1}=\Span_{0\leq j\leq l+1}\left\{\left[E_{l+1-j,j}, \sum_{j=p_1}^{s_1}\Theta^1_{s_1-j, j}\right]\right\}=\im\,d^{s_1+l+1, s_1+1}.&
\end{eqnarray*} Hence, no further terms can be normalized in the \(s_1+l\)-level. By the equation \eqref{Kers1}, the limit of the space \(\ker d^{s_1+l, s_1+l}\) in filtration topology equals to the linear space spanned by all nonlinear rotational vector fields. Therefore, the time-one map flows associated with the transformation generators from the radical ideal of \(\LST\) commutes with the flow of the simplest normal form.

Let \(s_1= \infty\) and \(s_2<\infty\) and define the grading function \(\delta(\Theta^i_{m, n})=\delta(E_{m ,n})=m+n.\)
This proves equation \eqref{s+1s2<infty}. Now the relation \(\im\,d^{s_2+l+1, s_2+l+1}=\im\,d^{s_2+l+1, s_2+1}\) for all \(l\geq 1\) holds due to
\begin{eqnarray*}
&\ker d^{s_2+l, s_2+l}\!=\!\sum_{k=1,j=0}^{s_2,l+k}\!\mathbb{R}(\0_{l+k-1}, E_{l+k-j, j},\0_{s_2-k})\!+\!\sum_{k=1,i=1,j=0}^{l+s_2,2,k}\mathbb{R}( \0_{k-1}, \Theta^i_{k-j, j},\0_{l+s_2-k}).&
\end{eqnarray*} This concludes that equation \eqref{s+1s2<infty} is an infinite level normal form. The argument for the nonlinear symmetry group of this case is similar to the case \(s_1<\infty\).
\end{proof}


\section{Orbital normal form classification } \label{Sec4}

We define a module structure for time rescaling calculations. The integral domain of formal power series generated by \bes
Z_{m,n}:=\left({x_1}^2+{y_1}^2\right)^m\left({x_2}^2+{y_2}^2\right)^n \;\hbox{ for } \; m, n\geq0
\ees is denoted by \(\RScr\) and correspond with the near-identity time rescaling generators. Hence, \(\RScr\) acts on \(\LST\) by
\be\label{3rdlemma}
Z_{m,n}\Theta_{i,j}^1:=\Theta_{i+m,j+n}^1, \qquad Z_{m,n}\Theta_{i,j}^2:=\Theta_{i+m,j+n}^2, \qquad Z_{m,n}E_{i,j}:=E_{i+m,j+n},
\ee and $\mathscr{L}$ is a torsion free $\RScr$-module. Recall equations \eqref{dkr}-\eqref{dkr1}-\eqref{TransOrder} by introducing
\bes\mathds{B}:=\LST,\quad \mathds{A}:= (\RScr, [\LST, \LST]),\quad\hbox{ and }\;\; (T, S)* v:= Tv+ [S, v]\quad \hbox{ for }\; T\in \RScr, S\in[\LST, \LST].\ees We reorder time and state transformation generators in equation \eqref{TransOrder} so that time rescaling generators appear consecutively. Recall \(s\) as in equation \eqref{sp} and update the grading function $\delta$ by
\begin{equation}\label{GradingOrb}
\delta(E_{m,n}) =m+n, \ \delta(\Theta_{m,n}^i)= s+m+n, \quad \hbox{ for }\; i=1, 2.
\end{equation}

\begin{lem}\label{1-st Lem Orbital NormalForm}
There exist a sequences of permissible time scaling and changes of state variables that they transform the vector field \(v\) in equation \eqref{Eq1} into the $(s+1)$-th level extended partial orbital normal form
\begin{eqnarray*}
&v^{(s+1)} := v_0+ \sum_{i+j=s} a_{i, j}^1\Theta^1_{i, j}+ \sum_{i+j= 1}^{s} a^2_{i, j}\Theta_{i, j}^2 +\sum_{j=p}^{s} b_{s-j, j}E_{s-j,j} \qquad\qquad\qquad\qquad&\\
&+\sum_{l=1,j=0,j=l+p+1}^{\infty,p-1,l+s}b_{l+s-j,j}E_{l+s-j, j}+ \sum_{l=1,j=0,j=l+p+1}^{\infty,p-1,l+s} a^2_{l+s-j, j}\Theta_{l+s-j, j}^2.&
\end{eqnarray*}
\end{lem}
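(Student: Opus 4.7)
My plan is to enlarge the transformation space of Lemma~\ref{s+1level} by the time-rescaling module $\RScr$, with action~\eqref{3rdlemma}, and combine both actions via $(T,S)\ast v = Tv + [S,v]$. The crucial new identity is
\begin{equation*}
Z_{m,n}\cdot v_0 = \omega_1\Theta^1_{m,n} + \omega_2\Theta^2_{m,n},
\end{equation*}
so for every $(m,n)$ with $m+n\geq 1$ a single scalar multiple of $Z_{m,n}$ eliminates the coefficient of $\Theta^1_{m,n}$ while only modifying that of $\Theta^2_{m,n}$. This asymmetric elimination is the source of the $\Theta^1/\Theta^2$ contrast visible in the claimed form.

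After applying Lemma~\ref{1stLevel} to bring $v$ into $\LST$, I would run the hypernormalization scheme of Lemma~\ref{s+1level} level by level, but with $\mathds{A}_l$ enriched to include $\RScr_l$. At each grade $l+s$ with $l\geq 1$ and $l\neq s$, the state-change component of the generator reproduces the Eulerian elimination of Lemma~\ref{s+1level}, while the time rescaling $Z_{m,n}$ with $m+n=l$ is chosen to cancel $\Theta^1_{m,n}$. The higher-grade Eulerian by-products that $Z_{m,n}$ creates by acting on the nonlinear part of $v$ are absorbed at subsequent levels, exactly as in the multi-level framework.

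The critical step is level $s+1$ (grade $2s$), where all state-change generators of grade $s$ contribute zero (since $[E_{m,n},v_0]=0$ and $[E_{m,n},E_{s-k,k}] = 2(m+n-s)E_{\cdot,\cdot}=0$ when $m+n=s$). Only the $s+1$ time-rescaling generators $Z_{s-j,j}$ for $0\leq j\leq s$ contribute, each producing
\begin{equation*}
\omega_1\Theta^1_{s-j,j} + \omega_2\Theta^2_{s-j,j} + \sum_{k=p}^s b_{s-k,k}\, E_{2s-j-k,\,j+k}.
\end{equation*}
I would use these $s+1$ degrees of freedom to eliminate the $s+1$ Eulerian components $E_{2s-j,j}$ with $p\leq j\leq s+p$, matching the $j$-range eliminated at every $l\neq s$. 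The corresponding $(s+1)\times(s+1)$ matrix is a Toeplitz band with $b_{s-p,p}$ on the main diagonal and hence invertible under the assumption $b_{s-p,p}\neq 0$; this yields explicit recursive formulas for the time-rescaling coefficients $c_{s-j,j}$ analogous to~\eqref{RecurRelations1}. The same choice only modifies (and does not eliminate) the coefficients of $\Theta^i_{s-j,j}$, so the block $\sum_{i+j=s}a^1_{i,j}\Theta^1_{i,j}$ and the $i+j=s$ part of $\sum_{i+j=1}^s a^2_{i,j}\Theta^2_{i,j}$ survive in the normal form as claimed.

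The main obstacle is the combined bookkeeping across all levels: one must verify that the Eulerian by-products of $Z_{m,n}$ at levels $l<s$ do not obstruct the state-change elimination of Lemma~\ref{s+1level}, that at level $s+1$ the Toeplitz-band matrix above is invertible, and that the surviving $\Theta^2$-coefficients of low degree are precisely those indexed as in the statement. Each of these reduces to the same lower-triangular pivot argument that underpins Lemma~\ref{s+1level} and Theorem~\ref{InftLevel}, so the recursive transformation generators carry over cleanly and are suitable for a symbolic computer-algebra implementation.
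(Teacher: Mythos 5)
Your proposal is correct and follows essentially the same route as the paper: enlarge the generator space by the time-rescaling module acting via $Z_{m,n}v_0=\omega_1\Theta^1_{m,n}+\omega_2\Theta^2_{m,n}$, use one rescaling coefficient per $(m,n)$ to kill $\Theta^1_{m,n}$ at every grade $l+s$ with $l\neq s$, and at the degenerate grade $2s$ (where the state-change contribution vanishes) redirect the $s+1$ rescaling generators $Z_{s-j,j}$ to eliminate $E_{2s-j,j}$ for $p\leq j\leq p+s$ via the invertible lower-triangular system with pivot $b_{s-p,p}$. This is precisely the paper's argument, including the explanation of why the grade-$2s$ block $\sum_{i+j=s}a^1_{i,j}\Theta^1_{i,j}$ survives.
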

\begin{proof}
For $l\geq 1,$ we have
\begin{eqnarray*}
&\Scale[0.9]{\ker d^{l+s-1,s}\!=\!\Span_{k=0, j=0}^{s-1, l+k}\! \left\lbrace (\0_{k},\!Z_{l+k-j,j},\!\0),\! (\0_{k+s}, E_{l+k-j,j},\! \0) \right\rbrace
\!+\!\Span_{k=0,j=0, l+k-s\geq 0}^{s-1, l+k-s}\!\left(\0_{k+s},\! \Theta_{l+k-s-j,j}^i,\! \0\right).}&
\end{eqnarray*}
Thereby, \(\{d^{l+s,s+1}(T, \0)| (T, 0)\in \ker d^{l+s-1,s}\}+\rad\,\LST\) is given by
\bas
&\{d^{l+s,s+1}(\0, S, 0)| (\0, S)\in \ker d^{l+s-1,s}\}+\rad\,\LST \qquad\hbox{ for } \quad l\neq s,&
\eas and
\bas
&\Span\{\Theta^1_{l-k, k}| k=0, \cdots l\}\subseteq\{d^{l+s,s+1}(T, \0)| (T, 0)\in \ker d^{l+s-1,s}\}\cap\rad\,\LST.&
\eas For the case \(l\neq s\), we use the latter inclusion to simplify all \(\Theta^1\)-terms except those of grade \(2s.\) When \(l=s\), \(\{d^{2s,s+1}(\0, S, 0)| (0, S)\in \ker d^{2s-1,s}\}=\{0\}.\) Therefore, we instead use the inclusion
\bes\Span\{E_{2s-k, k}| k=p, \cdots, p+s\}\subseteq\Pi(\{d^{2s,s+1}(T, \0)| (T, 0)\in \ker d^{2s-1,s}\}+\rad\,\LST)\ees to simplify \(E_{2s-k, k}\)-terms for \(k=p, \cdots, p+s.\)
Since dimension of \({\rm im}\,d^{l+s, s}+\rad\, \LST\) is \(l+1\) as a subspace of \(\frac{\LST_{l+s}}{\rad\, \LST},\) \ie \({\rm im}\,d^{l+s, s}+\rad\, \LST\subseteq \frac{\LST_{l+s}}{\rad\, \LST},\) hypernormalization of \(E_{l+s-j, j}\)-terms beyond what are simplified in the non-orbital normal form process is not possible when \(l\neq s\). Given \(s\)-number of real values \(b_{2s-j,j}\in \mathbb{R}\) \((p\leq j\leq p+s)\) for \(l=s\), we introduce $\bar{c}_{s-j,j}$ in equation \eqref{Tl} as \(\bar{c}_{s, 0}:=\frac{-b_{2s-p,p}}{b_{s-p,p}},\) \(\bar{c}_{s-j, j}:= \frac{b_{2s-p-j,p+j}+\sum_{i=0}^{j-1}\bar{c}_{s-i, i} b_{s-p-j+i,p+j-i}}{-b_{s-p,p}}\) for \(1\leq j \leq s-p,\) and
\begin{eqnarray}\label{RecurRelations}
&\bar{c}_{s-j, j}:= \frac{b_{2s-p-j,p+j}+\sum_{j=0}^{s-p-1}\bar{c}_{2s-j-p-i, j-s+p+i} b_{j,s-j}}{-b_{s-p,p}}\;\; \hbox{ when }\; s-p+1\leq j \leq s.&
\end{eqnarray} Then, $d^{2s,s+1}\left(\sum_{j=0}^{s} \bar{c}_{s-j,j}Z_{s-j,j},\0_{2s+1}\right)= -\sum_{j=p}^{p+s}b_{2s-j,j}E_{2s-j,j},$ \ie terms $b_{2s-j,j}E_{2s-j,j}$ for $j=p,\cdots, p+s$ are simplified from the vector field $v^{(s+1)}.$
When \(l\neq s,\) let
\bas
&\bar{c}_{l-j,j}:=-\frac{1}{\omega_1} a^1_{l-j, j} \quad\hbox{ for }\quad 0\leq j \leq l, \; l\geq 1, &
\eas
while \( c_{l-j, j}\) and \(d^2_{l-j, j}\) are taken as in \(s+1\)-th level non-orbital hypernormalization. Hence, all \( \Theta^1_{l-j, j}\)-terms for all \(l\neq s,\) \(0\leq j\leq l\) can be simplified in \( s+1\)-th level orbital normal form. However, \(\Theta^2_{l-j, j}\)-terms are not simplified more than what is done in the non-orbital normal form case.
\end{proof}


\begin{thm}\label{r=infty ONF thm}
\begin{itemize}
  \item[1.] Let \(r=\infty\) and \( s<\infty.\) Then, the infinite level orbital normal form is given by
\begin{eqnarray*}
&\Scale[0.9]{\!v^{(s+1)}\!=\! v_0\!+\!\sum_{i+j=s}\!a_{i, j}^1\Theta^1_{i, j}\!+\!\sum_{i+j= 1}^{s}\!a^2_{i, j}\Theta_{i, j}^2\!+\!\sum_{j=p}^{s}\!b_{s-j, j}E_{s-j,j}
\!+\!\sum_{l=1,j=0,j=l+p+1}^{\infty,p-1,l+s}\!a^2_{l+s-j, j}\Theta_{l+s-j, j}^2.}&
\end{eqnarray*}
  \item[2.] When \(r=s=\infty\) and \(s_2<\infty\), the \(s_2+1\)-th level (and the infinite level)
orbital normal form of \(v^{(1)}\) is given by equation \eqref{s+1s2<infty}.
\end{itemize}
\end{thm}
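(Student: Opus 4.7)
The plan is to mirror the strategy used in Theorem~\ref{InfNFCor} and Theorem~\ref{ThmRinfty}, adapted to incorporate time rescalings on top of state changes.

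For Item~1, I would begin with the $(s+1)$-th level extended partial orbital normal form from Lemma~\ref{1-st Lem Orbital NormalForm}. The hypothesis $r=\infty$ forces every first-level Eulerian coefficient $b_{m,n}$ with $m+n>s$ to vanish. An inductive check across the $(s+1)$-level normalizations then shows that these coefficients stay zero, because the relevant transformation generators act on rotational data either via time rescaling $Z_{m,n}\Theta^i_{k,l}=\Theta^i_{m+k,n+l}$ or via brackets $[E,\Theta^i]$ that, by the structure constants \eqref{Struc}, produce only rotational outputs of grade $>s$, never Eulerian ones. This removes the Eulerian summation at grades exceeding $s$ in the formula of Lemma~\ref{1-st Lem Orbital NormalForm}, giving the stated expression. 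To verify that no further normalization is possible, I would show $\text{im}\, d^{l+s+1,r+1}\subseteq \text{im}\, d^{l+s+1,s+1}$ for every $l\geq 0$ and every $r\geq s+1$, paralleling the end of Theorem~\ref{InfNFCor}. The key is to characterize $\ker d^{l+s,l+s}$ as a direct sum of a diagonal $\mathbb{R}$-line generated by $(v_s,v_r,v_{r+1},\ldots)$-type entries (whose filtration-topology limit reproduces the trivial nonlinear symmetry $v^{(\infty)}-v_0$) and finite-dimensional pieces of grade-$\leq s$ Eulerian/rotational data that shrink to zero. Feeding these kernel elements into the next-level differential yields brackets $[S,v^{(r)}_j]$ and products $T\cdot v^{(r)}_j$ that collapse into $\text{im}\, d^{l+s+1,s+1}$ via $[\Theta^i,\Theta^j]=0$, $T\cdot \Theta^i=\Theta^i$, and the absence of higher-grade Eulerian content in $v^{(r)}$.

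For Item~2, I would update the grading to $\delta(\Theta^i_{m,n})=\delta(E_{m,n})=m+n$ and note that $r=s=\infty$ reduces $v^{(1)}$ to $v_0$ plus purely rotational terms. The computations $Tv_0=\omega_1\Theta^1_{m,n}+\omega_2\Theta^2_{m,n}$ and $[E_{m,n},v_0]=-2(m+n)(\omega_1\Theta^1_{m,n}+\omega_2\Theta^2_{m,n})$ show that, at the first level, time rescaling provides only a direction parallel to the one available from state transformations in the $(\Theta^1,\Theta^2)$-plane at each grade. With the formal basis style that prioritizes elimination of $\Theta^1$ over $\Theta^2$, the $\Theta^1$-component is removed (automatically when $s_1=\infty$, and constructively when $s_1<\infty$ by using $Z_{m,n}\Theta^1_{s_1-p_1,p_1}=\Theta^1_{s_1-p_1+m,p_1+n}$ as an extra independent direction at grades above $s_1$). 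What remains is precisely equation~\eqref{s+1s2<infty}, and the kernel-collapse argument from Item~1 applied to the present grading then confirms that $v^{(s_2+1)}$ is the infinite level orbital normal form.

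The main obstacle I anticipate lies in Item~1: carefully tracking $\ker d^{l+s,l+s}$ for large $l$ so that its filtration-topology limit isolates the trivial nonlinear symmetry without introducing spurious new eliminations from the time-rescaling coordinates, which were absent in the non-orbital setting of Theorem~\ref{InfNFCor}. A secondary difficulty in Item~2 is reconciling the asymmetric $\Theta^1$-priority style with the constraint that $T$ and $E$ produce parallel outputs at level one; the resolution is that higher-level brackets involving the $\Theta^1_{s_1-p_1,p_1}$ term supply the missing second direction whenever $s_1<\infty$.
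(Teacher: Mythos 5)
Your overall strategy matches the paper's: for Item~1, reduce to the non-orbital argument of Theorem~\ref{ThmRinfty} once the Eulerian coefficients of grade $>s$ are seen to vanish and to persist, and for Item~2, use the time-rescaling direction at each bidegree together with the $\Theta^1$-priority to strip all $\Theta^1$-terms at the first level and then run the $\Theta^2$-normalization. However, two of your supporting computations are wrong in ways that matter. In Item~2 you write $[E_{m,n},v_0]=-2(m+n)(\omega_1\Theta^1_{m,n}+\omega_2\Theta^2_{m,n})$; by the structure constants \eqref{Struc} one has $[\Theta^i_{0,0},E_{m,n}]=2\cdot 0\cdot\Theta^i_{m,n}=0$, so $[E_{m,n},v_0]=0$. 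Indeed $v_0$ is central in $\LST$ (the centralizer statement the paper invokes), so state transformations contribute \emph{nothing} at the first level and the only available direction at bidegree $(m,n)$ is $Z_{m,n}v_0=\omega_1\Theta^1_{m,n}+\omega_2\Theta^2_{m,n}$; there are no ``parallel directions'' to reconcile. This also makes your parenthetical about needing $Z_{m,n}\Theta^1_{s_1-p_1,p_1}$ as an extra direction when $s_1<\infty$ superfluous: since $\omega_1\neq 0$, rescaling $v_0$ already kills every $\Theta^1_{m,n}$ coefficient regardless of $s_1$. What actually needs justifying — and what the paper justifies via the priority of $\Theta^1$-elimination — is that no later level may re-use time rescaling, since $Zv_0$ would re-create $\Theta^1$-terms.

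In Item~1, your inductive claim that the generators ``never'' produce Eulerian outputs overlooks their action on $v_s=\sum_j b_{s-j,j}E_{s-j,j}$: both $Z_{m,n}v_s$ and $[E_{m,n},v_s]=2(m+n-s)\sum_j b_{s-j,j}E_{m+s-j,n+j}$ are Eulerian of grade $s+m+n>s$. The reason no Eulerian terms reappear is not that they cannot be produced, but that (i) the coupled generators $2(s-m-n)Z_{m,n}+E_{m,n}$ annihilate $v_s$ and are the ones used against rotational terms, and (ii) the formal basis style gives elimination priority to Eulerian terms, so a generator whose net effect introduces an Eulerian term is excluded when computing the complement space — this is also precisely why $\sum_{i+j=s}a^1_{i,j}\Theta^1_{i,j}$ survives, as killing it with $Z_{s-j,j}$ would create grade-$2s$ Eulerian terms. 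Finally, your kernel description built on $(v_s,v_r,v_{r+1},\dots)$-type entries imports the $r<\infty$ picture of Theorem~\ref{InfNFCor}; with $r=\infty$ the relevant limiting kernel is $\Span\{E_{s-j,j}\,|\,0\leq j\leq s\}$ as in Theorem~\ref{ThmRinfty}(1), not the line through $v^{(\infty)}-v_0$.
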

\begin{proof} Item 1. In this case, the prime goal for normalization is to eliminate \(\Theta\)-terms while we prevent the creation of Eulerian terms. Since any further use of time rescaling beyond the first level is not possible, the proof follows the arguments in the \((s+l)\)-th level non-orbital normal form case.

Item 2. Since \(r=s=\infty,\) all Eulerian terms are simplified in the first level normal form. The centralizer of \(\mathbb{R}\Theta^1_{0, 0}+\mathbb{R}\Theta^2_{0, 0}\) in \(\LST\) is given by \(\LST=C_{\mathbb{R}\Theta^1_{0, 0}+\mathbb{R}\Theta^2_{0, 0}}(\LST).\) Hence, only time rescaling terms can be used for further simplification in the first level.
Hence all \(\Theta^1_{l-j, j}\)-terms for \(l\geq 1\) and \(0\leq j\leq l\) are simplified in the first level and the first level orbital normal form of \(v^{(1)}\) is represented by \(v^{(1)} := v_0+ \sum_{i+j=1}^{\infty} a^2_{i, j}\Theta_{i, j}^2.\) Now update the grading
function \(\delta(\Theta^i_{m, n})=\delta(E_{m, n})=m+n\) for \(i=1, 2\). Define \(s_i, p_i\) as in \eqref{s1p1s2p2}. Here this is, of course, defined for the first level orbital normal form. The linear space \(\ker d^{l+s_2-1, s_2}\) is given by
\begin{eqnarray}\label{Im d(l+s+1, s2+1) s=infty,s2<infty}
 & \sum_{k=0,j=0}^{s_2-1, l-s+k}\left\{\mathbb{R}(\0_{s_2+k}, E_{l-s_2+k-j, j}, \0_{s_2-k-1})
 +\sum_{i=1}^{ 2}\mathbb{R}(\0_{s_2+k}, \Theta^i_{l-s_2+k-j, j}, \0_{s_2-k-1})\right\},& \nonumber\\
&\hbox{ and }\quad \im\,d^{l+s_2+1, s_2+1}= \sum_{j=0}^{l}\mathbb{R}\left[E_{l-j, j}, \sum_{j=0}^{s_2}\Theta_{s_2-j, j}^2\right]+\sum_{j=0}^{l}\mathbb{R}Z_{l+s_2-j, j}v_0.&
\end{eqnarray} Since our priority of elimination is with \(\Theta^1\)-terms than \(\Theta^2\)-terms, no time rescaling can be used for further elimination. Thereby, equation \eqref{s+1s2<infty} represents \((s_2+1)\)-level normal form of \(v^{(1)}\). Now we have
 \begin{eqnarray*}
  \ker d^{l+s_2,l+s_2}=&\sum_{k=1,j=0}^{s,l+k}\mathbb{R}E_{l+k-j,j}\mathbf{e}^{2l+s_2+k}_{2l+2s_2}+\sum_{k=1,j=0}^{l+s_2,k}\mathbb{R}\Theta^i_{k-j, j}\mathbf{e}^{l+s_2+k}_{2l+2s_2}.&
 \end{eqnarray*} Thus, \(\im\,d^{l+s_2+1, l+s_2+1}=\im\,d^{l+s_2+1, s_2+1}.\) Hence, \eqref{s+1s2<infty} is the infinite level orbital normal form.
\end{proof}
The following lemma plays a central role in the infinite level derivation of orbital normal forms.
\begin{lem}\label{lemRank}
Let \(\rank\left[\mathcal{M}_r^s \quad \mathcal{M}_s^r  \right]=\alpha,\) \(s, r\) be similarly defined as in equations \eqref{sp}-\eqref{rq}, and \(l\geq 0\). Then,
\begin{equation}\label{RankThm}
\rank\left[\mathcal{M}_r^{l} \quad \mathcal{M}_s^{l+r-s}\right]=
\left\{\begin{array}{lcc}
2l+r-s+2 & \text{for}  & 0 \leq l \leq \alpha-r-2,\\ [0.7ex]
\alpha+l-s & \text{when }  & l > \alpha-r-2.
\end{array}\right.
\end{equation} The case \(l=0\) is useful for the results in Section \ref{Sec5}.
\end{lem}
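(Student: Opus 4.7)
The plan is to identify the Toeplitz blocks $\mathcal{M}_a^b$ with polynomial-multiplication operators and then compute the kernel explicitly via a greatest-common-divisor argument. Setting $P_r(z):=\sum_{j=0}^{r}b_{r-j,j}\,z^j$ and $P_s(z):=\sum_{j=0}^{s}b_{s-j,j}\,z^j$, I identify $\mathcal{M}_r^b$ with the matrix of the multiplication map $A(z)\mapsto A(z)P_r(z)$ from polynomials of degree at most $b$ into polynomials of degree at most $b+r$, and analogously $\mathcal{M}_s^b$ with multiplication by $P_s$. Under this identification, the combined matrix represents
\begin{equation*}
[\mathcal{M}_r^l\ \mathcal{M}_s^{l+r-s}]\colon (A,C)\ \longmapsto\ A(z)P_r(z)+C(z)P_s(z),\qquad \deg A\le l,\ \deg C\le l+r-s,
\end{equation*}
whose column dimension is $(l+1)+(l+r-s+1)=2l+r-s+2$, so $\rank[\mathcal{M}_r^l\ \mathcal{M}_s^{l+r-s}]$ equals $2l+r-s+2$ minus the kernel dimension.

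Next I would translate the rank hypothesis into greatest-common-divisor data. Factoring $P_r=gR$ and $P_s=gS$ with $g:=\gcd(P_r,P_s)$ and $\gcd(R,S)=1$, the classical Bezout--Sylvester dimension formula applied to the $(r+s+1)\times(r+s+2)$ matrix $[\mathcal{M}_r^s\ \mathcal{M}_s^r]$ yields $\deg g=r+s+1-\alpha$.

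Then I would solve the kernel equation. From $A(z)P_r(z)+C(z)P_s(z)=0$ and the coprimality of $R$ and $S$, one obtains $A=ST$ and $C=-RT$ for some polynomial $T\in\mathbb{R}[z]$. Using $\deg S=s-\deg g$ and $\deg R=r-\deg g$, the two bounds $\deg A\le l$ and $\deg C\le l+r-s$ collapse to the single inequality $\deg T\le l-s+\deg g=l+r-\alpha+1$, so
\begin{equation*}
\dim\ker[\mathcal{M}_r^l\ \mathcal{M}_s^{l+r-s}]=\max\bigl(0,\,l+r-\alpha+2\bigr).
\end{equation*}
Subtracting from $2l+r-s+2$ yields $2l+r-s+2$ when $l\le\alpha-r-2$ and $\alpha+l-s$ when $l>\alpha-r-2$; continuity of the two formulas at the transition $l=\alpha-r-2$ is automatic, since both give $2\alpha-r-s-2$ there.

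The sole technical point requiring care is the collapse of the two degree constraints on $(A,C)$ into a single constraint on $\deg T$; this is a short direct calculation once the factorizations $P_r=gR$ and $P_s=gS$ are introduced. I expect no further obstacle beyond correctly fixing the polynomial-multiplication interpretation of $\mathcal{M}_a^b$ compatible with equation \eqref{drs}, after which the whole argument is a single Bezout identity plus a subtraction.
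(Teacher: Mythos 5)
The paper states Lemma \ref{lemRank} without any proof, so there is no in-paper argument to compare yours against. On its own terms your strategy is the right one: identifying $\mathcal{M}_a^b$ with the Toeplitz matrix of multiplication by $P_a(z)=\sum_j b_{a-j,j}z^j$ on polynomials of degree at most $b$ is consistent with the matrix displayed in equation \eqref{drs} and with the worked example following Theorem \ref{InftLevel}, the kernel of $(A,C)\mapsto AP_r+CP_s$ is indeed $\{(ST,-RT)\}$ with $g=\gcd(P_r,P_s)$, $P_r=gR$, $P_s=gS$, and the final rank formula you derive agrees with \eqref{RankThm}.

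There is, however, one gap in rigor: both of your intermediate identities silently assume that $P_r$ and $P_s$ have full degrees $r$ and $s$. The definitions \eqref{sp} and \eqref{rq} only control the \emph{low-order} coefficients ($b_{s-p,p}\neq0$, $b_{r-q,q}\neq0$); nothing forces $b_{0,s}\neq0$ or $b_{0,r}\neq0$, so one may have $\deg P_s=s-e_s$ and $\deg P_r=r-e_r$ with $e_s,e_r$ possibly positive. In that case (i) the Sylvester count gives $\alpha=r+s+1-\deg g-\min(e_r,e_s)$ rather than $r+s+1-\deg g$, and (ii) the two bounds on $\deg T$ become $\deg T\le l-s+\deg g+e_s$ and $\deg T\le l-s+\deg g+e_r$, which do not collapse when $e_r\neq e_s$; the binding one is $\deg T\le l-s+\deg g+\min(e_r,e_s)$. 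The two discrepancies cancel exactly, since the kernel dimension is $\max\bigl(0,\,l+1-s+\deg g+\min(e_r,e_s)\bigr)=\max\bigl(0,\,l+r+2-\alpha\bigr)$ either way, but as written your proof does not establish this. The clean fix is to avoid computing $\deg g$ altogether: observe that for every $l$ the kernel dimension equals $\max(0,\,l+1+\beta)$ with $\beta:=\min(-\deg S,\,r-s-\deg R)$ a constant independent of $l$; check that $s+1+\beta\ge1$, so the known case $l=s$ calibrates $\beta=r+1-\alpha$; then subtract from the column count $2l+r-s+2$. With that adjustment the argument is complete.
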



Now we update the grading function $\delta$ by
\begin{equation*}
\delta(E_{m,n}) =m+n, \ \delta(\Theta_{m,n}^i)= r+s+m+n, \ i=1, 2.
\end{equation*}

\begin{thm}\label{r+1Orb}
Let \(p>q\). There exist sequences of non-negative integers \(\dd_l,\) permissible state transformations and time scalings so that they transform equation \eqref{Eq1} into the \(r+1\)-th level orbital normal form
\begin{eqnarray*}
&v_0\!+\!b_{r-q, q}E_{r-q, q}\!+\!\sum_{i=1, l=1,j=1}^{2, s,l}\!a^i_{l-j, j}\Theta_{l-j, j}^i\!+\!\sum_{j=p}^{s}\! b_{s-i, i}E_{s-i,i}\!+\!\sum_{l=1, j=0,l+p+1}^{\infty, p-1,l+s}\!a^1_{l+s-j, j}\Theta_{l+s-j, j}^1&\\
&\!+\!\sum_{i+j=s} a_{i, j}^2\Theta^2_{i, j}\!+\!\sum_{l=0,j=0, l+p+r-s+\dd_l+1}^{\infty, q-1, l+r}b_{l+r-j, j}E_{l+r-j, j}\!+\!\sum_{ l=0}^{p-q-1}\sum_{j=q+l+1}^{p-1}b_{l+r-j, j}E_{l+r-j, j}&
\end{eqnarray*} where \(b_{r-j, j}=0\) for \(j=0, \ldots, q-1\) and \(\dd_l\leq\min\{s-p, \max\{l-p+q+1, 0\}\}.\)
\end{thm}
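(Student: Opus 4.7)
The plan is to continue the hypernormalization from the $(s{+}1)$-level orbital normal form $v^{(s+1)}$ supplied by Lemma \ref{1-st Lem Orbital NormalForm}, using the updated grading $\delta(E_{m,n})=m+n$ and $\delta(\Theta^i_{m,n})=r+s+m+n$, which isolates the Eulerian terms of grade $r$ and higher as the prime target of the $(r{+}1)$-level reduction. Since we are in the orbital setting, the space of admissible transformation generators is $\mathds{A}=(\RScr,[\LST,\LST])$ with action $(T,S)*v:=Tv+[S,v]$, and the available generators in grade $l$ for the $(r{+}1)$-level map come from $\ker d^{l+r-1,r}$. Because $q<p$, the coefficient $b_{r-q,q}\neq 0$ provides the first genuinely new invertible column block at grade $r+l$, while the Eulerian part of the previous level continues to supply invertible columns indexed by $b_{s-p,p}\neq 0$.

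Following the strategy of Theorem \ref{InftLevel}, I would form the matrix representation of $d^{l+r,r+1}$ with respect to the ordered basis obtained by listing the generators of grades $s,r$ and the time rescalings $Z_{l+r-j,j}$, and then, after dropping the trivially zero rows corresponding to $j<q$, partition the remaining matrix in the block form \eqref{SubMatrices}. The block $B$ here is again a lower triangular matrix with all diagonal entries equal to $b_{s-p,p}$, hence invertible. I would then define
\[
\dd_l := \rank\bigl(C-D B^{-1}A\bigr),
\]
in exact parallel with \eqref{drankD}, so that the rows corresponding to successful Gaussian eliminations identify the normalizable $E_{l+r-j,j}$-terms: those with $j\in[q,\,l+p+r-s+\dd_l]$ are killed, and the rest survive in the stated index ranges. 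The row count $s-p$ of $C$ gives the bound $\dd_l\le s-p$, while the number of admissible columns that can contribute at grade $l+r$ yields $\dd_l\le\max\{l-p+q+1,0\}$. Combining the two produces the advertised bound $\dd_l\le\min\{s-p,\max\{l-p+q+1,0\}\}$.

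For the rotational part of grade $l+r$ (present only when $l\ge s$), I would use Lemma \ref{lemRank} to compute the rank of $[\mathcal{M}_r^l\ \mathcal{M}_s^{l+r-s}]$, which controls how many $\Theta$-terms can be simplified. The time rescaling generators act via $Z_{m,n}v_0=\omega_1\Theta^1_{m,n}+\omega_2\Theta^2_{m,n}$ and hence give one scalar equation per grade coupling $\Theta^1$ and $\Theta^2$. Combined with the state-transformation contributions $[\,\cdot\,,v_s]$ and $[\,\cdot\,,v_r]$, this eliminates all $\Theta^2_{i,j}$-terms of grades $\ne s$ (the formal basis style gives elimination priority to $\Theta^2$ at this level) while leaving $\Theta^1$-terms at grades $l+s$ in the surviving ranges dictated by the kernel of the rank map from Lemma \ref{lemRank}. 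The low-grade $\Theta^i_{l-j,j}$ for $l\le s$ remain as they cannot be reached by generators of the admissible grades.

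Finally, I would invoke Theorem \ref{LNF} to package the level-by-level reductions into a single sequence of near-identity transformations, and verify that the complement spaces $\mathcal{C}^{l+r,r+1}$ of $\im d^{l+r,r+1}$ inside the relevant graded slices produce exactly the three index packets shown in the statement. The main obstacle is the block-matrix bookkeeping: carrying out the Schur-complement reduction with the correct split at the $j=p$ column, simultaneously tracking which additional columns (and hence rows) become available as $l$ varies relative to $p-q$, and showing that the resulting rank never exceeds both $s-p$ and $\max\{l-p+q+1,0\}$. This requires a careful case analysis according to whether $l\le p-q-1$ (so that the extra band $q+l+1\le j\le p-1$ still contributes surviving $E$-terms) or $l\ge p-q$ (so that $\dd_l$ saturates), mirroring the two ranges in the final sum of the statement.
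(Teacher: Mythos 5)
Your treatment of the Eulerian terms is essentially the paper's argument: form the matrix representation of the reduced level-$(r{+}1)$ map out of the blocks \(\mathcal{M}_r^l\) and \(\mathcal{M}_s^{l+r-s}\) (the paper additionally notes that the third block \(2(l+r-2s)\mathcal{M}_s^{l+r-s}\) coming from pure time rescalings is linearly dependent on the second and may be discarded), first consume the rows \(q\le j\le p-1\) using the triangular structure headed by \(b_{r-q,q}\ne 0\), then remove the first \(p\) rows and \(p-q\) columns, take the Schur complement \(C_l-D_lB_l^{-1}A_l\) of the invertible lower-triangular block \(B_l\) with diagonal \(b_{s-p,p}\), set \(\dd_l\) equal to its rank, and read off the surviving index packets with the case split \(l\le p-q-1\) versus \(l\ge p-q\). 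The bound \(\dd_l\le\min\{s-p,\max\{l-p+q+1,0\}\}\) follows exactly as you say, and Lemma \ref{lemRank} supplies the explicit value of \(\dd_l\) in terms of \(\alpha=\rank[\mathcal{M}_r^s\ \mathcal{M}_s^r]\).

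The gap is in your handling of the rotational terms. You assert that at the \((r{+}1)\)-level the time rescalings, combined with \([\,\cdot\,,v_s]\) and \([\,\cdot\,,v_r]\), eliminate all \(\Theta^2\)-terms of grades \(\ne s\), and that Lemma \ref{lemRank} ``controls how many \(\Theta\)-terms can be simplified.'' Neither claim is justified, and the second is simply not what that lemma does (it governs the Eulerian column spaces only). The step your sketch is missing --- and the one that licenses the entire reduction to the quotient \(\LST/\rad\LST\) on which your block-matrix computation lives --- is the verification that
\(\im\, d^{l+r,r+1}\cap\rad\,\LST\) consists only of brackets \(\bigl[\sum d^i_{l-2s-j,j}\Theta^i_{l-2s-j,j},\,v_s\bigr]\), i.e.\ that \emph{no} rotational term can be removed at this level beyond what was already achieved in the \((s{+}1)\)-level orbital normal form of Lemma \ref{1-st Lem Orbital NormalForm}. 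Without that computation you cannot restrict attention to the factor map \(\hat d^{l+r,r+1}\), and the rotational part of the normal form is left unaccounted for: it must simply be inherited from the previous level, not re-derived. Note also that your stated priority (``elimination priority to \(\Theta^2\)'') contradicts the paper's ordering convention (\(\Theta^1\)-terms precede \(\Theta^2\)-terms, and the time-rescaling coefficient \(-a^1_{l-j,j}/\omega_1\) in Lemma \ref{1-st Lem Orbital NormalForm} is chosen to kill \(\Theta^1\), not \(\Theta^2\)); the apparent swap of superscripts between that lemma and the statement of Theorem \ref{r+1Orb} is an inconsistency in the source you should flag rather than invent a new elimination mechanism to reproduce.
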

\begin{proof}
We have
\begin{eqnarray*}
&\ker d^{l+r-1, r}:=\Span^{s-1,l+r-s+k}_{k=0,j=0}\{(\0, Z_{l+r-s+k-j,j},\0), (\0_{k+r-s}, E_{l+r-s+k-j,j},\0_{k-s+1})\}&\\
&+\oplus_{k=0,j=0}^{r-s-1,l+k}\mathbb{R}(\0, 2(s-l-k)Z_{l+k-j,j}, \0, E_{l+k-j,j}, \0)+\oplus_{i=1, k=0, j=0}^{2, s-1,l-2s+k}\mathbb{R}(\0,\Theta_{l-2s+k-j,j}^i,\0).&
\end{eqnarray*} We refer to \(2(s-l-k)Z_{l+k-j,j}\mathbf{e}^{k+1}_{2l+2r}+E_{l+k-j,j}\mathbf{e}^{l+k+r+1}_{2l+2r}\) as a coupled term, that is a time term coupled with a state term. For an arbitrary \(\Scale[0.85]{(T, S)\!:=\!( T_l,\! T_{l+1},\! \cdots,\! T_{l+r-1},\! S_{l},\! \cdots,\! S_{l+r-1})\!\in\!\ker d^{l+r-1,r}\!,}\)
\bas
&d^{l+r, r+1}(T, T_{l+r}, S,  S_{l+r})=T_{l}v_r+ T_{l+r-s}v_s+ \left[ S_{l}, v_r \right]+ \left[ S_{l+r-s}, v_s \right],&
\eas
\ba\label{Yk}
&S_{k}:=\sum_{j=0}^{k} c_{k-j,j} E_{k-j, j}+\sum_{i=1, j=0}^{2, k-r-s} d_{k-r-s-j,j}^i \Theta^{i}_{k-r-s-j, j},&
\ea \(\bar{c}_{l+k-j, j}:=2(s-l-k)c_{l+k-j, j}\) for \(0\leq k\leq r-s-1\) and \(d_{l+k-r-s-j,j}^i:=0\) for \(0\leq k\leq r-s-1\). Hence,
\bas
&\im\,d^{l+r, r+1}\cap(\LST_{l+r}\cap\rad\,\LST)=\big\{\big[ \sum_{i=1}^{2}\sum_{j=0}^{l-2s} d_{l-2s-j,j}^i \Theta^{i}_{l-2s-j, j} , v_s \big]\big|\, d_{l-2s-j,j}^i\in \mathbb{R}\big\}&
\eas \ie \(\Theta\)-terms cannot be simplified more than what was simplified in the \((s+1)\)-th level orbital normal form. Hence we apply a reduction approach using the factor Lie algebra
\(\hat{\mathds{B}}:=\frac{\LST}{\rad\,\LST},\) \(\hat{\mathds{A}}= (\RScr, [\frac{\LST}{\rad\, \LST}, \frac{\LST}{\rad\, \LST}]),\)
and by inductively defining
\be
\label{dhat}\hat{d}^{l+r, r+1}: \ker(\hat{d}^{l+r-1,r})\times \hat{\mathds{A}}_{l+r}\rightarrow \hat{\mathds{B}}_{l+r}
\ee as a projection of the map \({d}^{l+r, r+1}\) on the factor Lie algebra. More precisely, we replace \((\hat{\mathds{A}}, \hat{\mathds{B}}_k, \hat{d}^{l+r, r+1})\) with \(({\mathds{A}}, {\mathds{B}}_k, {d}^{l+r, r+1})\) in \eqref{dkr}-\eqref{dkr1}-\eqref{TransOrder} in order to only discus further simplification of Eulerian terms in the factor Lie algebra \(\hat{\mathds{B}}\). Given the above argument, \({\rm im}\,{d}^{l+r, r+1}= {\rm im}\,\Pi\circ\hat{d}^{l+r, r+1}+ {\rm im}\,{d}^{l+r, s+1}.\) Similarly terms \(E_{l+k-m,m}\mathbf{e}^{l+r-s+k}_{2l+2r}+{\rad\,\LST}\) precede \(E_{l+k-j,j}\mathbf{e}^{l+r-s+k}_{2l+2r}+{\rad\,\LST}\) terms and
term \(Z_{l+k-m,m}\mathbf{e}^{k+1}_{2l+2r}\) precedes \(Z_{l+k-j,j}\mathbf{e}^{k+1}_{2l+2r},\) when \(m < j.\) Then, the matrix representation for \(\hat{d}^{l+r, r+1}\) is
\ba\label{MatrxDr+1}
&\left[2(s-r) \mathcal{M}_{r}^{l} \quad \mathcal{M}_{s}^{l+r-s} \quad 2(l+r-2s)\mathcal{M}_{s}^{l+r-s} \right].&
\ea Since the second and third blocks are linearly dependent, we omit the third block matrix by assigning \(c_{l+r-s-j, j}:=0\) in the transformation generators for all \(l\) and \(0\leq j\leq l+r-s\). Let \(l< p-q\). Since the first \(q\)-rows and \(p\)-rows of the matrices \(\mathcal{M}_{r}^{l}\) and \(\mathcal{M}_{s}^{l+r-s}\) are zero, \(E_{l+r-j, j}\)-terms for \(0\leq j\leq q-1\) cannot be normalized. On the other hand, terms of the form $E_{l+r-j,j}$ for $q \leq j \leq q+l$ are simplified via
\begin{small}\bas
&\sum^l_{j=0}\gamma_j\hat{d}^{l+r, r+1}\big(2(s-l)Z_{l-j,j}\mathbf{e}^1_{2l+2r}+E_{l-j,j}\mathbf{e}^{l+r+2}_{2l+2r}\big)+ \sum^{q+l}_{j=q} b_{l+r-j, j} E_{l+r-j, j}, &
\eas\end{small} that belongs to \(\sum^{l+r}_{j=q+l+1}\mathbb{R} E_{l+r-j, j}+\rad\,\LST.\) Here,
\begin{eqnarray}\label{RecurRelations}
&\gamma_{0}:=\frac{b_{l+r-p,p}}{b_{s-p,p}}, \qquad  \gamma_{i}:= \frac{b_{l+r-p-i,p+i}-\sum_{j=1}^{i}\gamma_{i-j} b_{s-p-j,p+j}}{2(s-r)b_{s-p,p}}\quad \hbox{ for }\;  1\leq i \leq l,&
\end{eqnarray} and real numbers \(b_{l+r-j, j}\) stand for the coefficients of the normalizing vector field. Here note that the transformation generated by \(\sum^l_{j=0}\gamma_j(2(s-l)Z_{l-j,j}\mathbf{e}^1_{2l+2r}+E_{l-j,j}\mathbf{e}^{l+r+2}_{2l+2r})\)
changes the coefficients associated with terms in the radical ideal and \(b_{l+r-j, j} E_{l+r-j, j}\)-terms for \(q+l+1\leq j\leq l+r\). Since \(l\leq p-q-1,\) the column spaces associated with matrices \(\mathcal{M}_{r}^{l}\) and \(\mathcal{M}_{s}^{l+r-s}\) are linearly independent and \(\dim \hat{d}^{l+r, r+1}= 2l+r-s+2.\) We omit the first \(p\)-rows and last \(s-p\)-rows of \([2(s-r) \mathcal{M}_{r}^{l}\quad \mathcal{M}_{s}^{l+r-s}]\) to obtain matrices \(A_l\) and \(B_l,\) respectively. Hence, \(E_{l+r-j, j}\)-terms for \(p\leq j\leq p+l+r-s\) (via equation \eqref{cbar}) and \(0\leq j\leq q-1\) (via equation \eqref{RecurRelations}) are simplified while \(E_{l+r-j, j}\)-terms for \(q\leq j\leq p-1\) cannot be normalized in the \(r+1\)-th level normalization step.

When $l+1\geq p-q,$ we use the recursive relations \eqref{RecurRelations} to eliminate $E_{l+r-j,j}$-terms for $q\leq j\leq p-1$ in the \(r+1\)-th level.
Remove the first $p$-rows and the first $p-q$-columns of \([2(s-r) \mathcal{M}_{r}^{l} \quad \mathcal{M}_{s}^{l+r-s}]\) to obtain a matrix blocked by \(A_l,\) \(B_l,\) \(C_l,\) \(D_l,\) where \(B_l\) is lower triangular with \(b_{s-p, p}\neq0\) on its diagonal entries. Let
\be\label{ul1}
\dd_l :=\rank (C_{l}-D_{l}B^{-1}_{l}A_{l})\quad \hbox{ for } l+1> p-q, \quad \hbox{and }\; \dd_l=0 \hbox{ for } l+1\leq p-q.
\ee Next by Lemma \ref{lemRank}, we have
\begin{equation}\label{ul2}
\dd_l=  \left\{ \begin{array}{lcc}
l+1-p+q        & \text{ when }       & p-q-1 \leq l \leq \alpha-r-2,\\ [0.7ex]
\alpha-r-1-p+q          & \text{for}       & l > \alpha-r-2,
\end{array}\right.
\end{equation} where \(\alpha:=\rank \left[ \mathcal{M}_r^s \quad \mathcal{M}_s^r \right]\). Hence, \(\rank\,\hat{d}^{l+r, r+1}= \dd_l+l+r-s+1+p-q\) and all terms $E_{l+r-j,j}$ for $p\leq j\leq p+l+r-s +\dd_l $ are simplified and
\bes
\Pi({\rm im}\, d^{l+r, r+1}+\rad\LST)=\Pi({\rm im}\, d^{l+r, s+1}+\Span_{j=q, j=p+l+r-s}^{k, p+l+r-s+\dd_l} \{E_{l+r-j, j}\}+\rad\LST),
\ees
for \(k=q+l\) when \(l<p-q,\) and \(k=p-1\) when \(l\geq p-q\).
\end{proof}


\begin{rem} For \(p<q,\) we have
\({\rm im}\, d^{l+r, r+1}={\rm im}\, d^{l+r, s+1}+\Span \{E_{l+r-j, j}\,|\, p\leq j\leq k, p+l+r-s\leq j\leq p+l+r-s+\dd_l\},\) \(\dd_l=0\) and \(k:=p+l+r-s\) for \(l+r-s+p<q.\) When \(l+r-s\geq q-p\), \(\dd_l\leq\min\{s-p,l+1\}\) and \(k:=q-1\).
\end{rem}
\begin{prop}\label{cor4.4}
When \(\rank\, d^{r+s, r+1}=r+s+1\) and \(p>q\), the infinite level orbital normal form is
\begin{eqnarray*}
&v^{(r+1)}= v_0\!+\!b_{r, 0}E_{r, 0}\!+\!\sum_{i+j= 1}^{s} a^1_{i, j}\Theta_{i, j}^1 \!+\!\sum_{j=p}^{s} b_{s-i, i}E_{s-i,i}\!+\!\sum_{l=1, j=0, j=l+p+1}^{\infty, p-1, l+s} a^1_{l+s-j, j}\Theta_{l+s-j, j}^1&\\
&\!+\!\sum_{i+j= s} a^2_{i, j}\Theta_{i, j}^2\!+\!\sum_{l=0, j=l+1, p+l+r-s+1}^{p-1, p-1,l+r}b_{l+r-j, j}E_{l+r-j, j}\!+\!\sum_{l=p, j=2l+r-s+2}^{s-2, l+r}b_{l+r-j, j}E_{l+r-j, j}.&
\end{eqnarray*} 
\end{prop}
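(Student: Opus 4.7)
The strategy is to combine Theorem~\ref{r+1Orb} with the rank hypothesis (via Lemma~\ref{lemRank}) to pin down the $(r{+}1)$-st level orbital normal form explicitly, and then to show stabilization, i.e.\ that no further simplification occurs at any higher level.

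First I would invoke Theorem~\ref{r+1Orb} to write the $(r{+}1)$-st level orbital normal form in terms of the integers $\dd_l$. Since the last two blocks of the matrix representation~\eqref{MatrxDr+1} are linearly dependent, the condition $\rank d^{r+s,r+1}=r+s+1$ is equivalent to $\alpha:=\rank[\mathcal{M}_r^s\;\mathcal{M}_s^r]$ attaining its maximum value $r+s+1$. Substituting $\alpha=r+s+1$ into the piecewise formula~\eqref{ul2} yields $\dd_l=l-p+q+1$ for $p-q-1\le l\le s-1$ and $\dd_l=s-p+q$ for $l\ge s$. Inserting these values into the index bounds of Theorem~\ref{r+1Orb} and collecting the Eulerian terms $E_{l+r-j,j}$ that can still appear should telescope into the two sums displayed in the proposition.

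Second, to promote the $(r{+}1)$-st level result to the infinite level, I would adapt the inductive bracket-antisymmetry argument from the proof of Theorem~\ref{InfNFCor}. The key is to show that for every $l\ge 1$ and every $S=(S_1,\ldots,S_{l+r+1})$ whose initial segment lies in $\ker d^{l+r,l+r}$, one has $d^{l+r+1,l+r+1}(S)\in \im d^{l+r+1,r+1}$, so that no grade-$(l+r+1)$ term is removable beyond what was already removed at the $(r{+}1)$-st level. Expanding $d^{l+r+1,l+r+1}$ and using that the only non-trivial grades of $v^{(r+1)}$ are $0, s, r, 2s$, the antisymmetry identity $[v_k,v_{l+r+1-k}]+[v_{l+r+1-k},v_k]=0$ cancels all cross-grade brackets. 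What remains is a sum of terms $[S_j,v_s]+[S_j,v_r]$ together with time rescalings $T_jv_s+T_jv_r$, each of which already lies in $\im d^{l+r+1,r+1}$.

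The main obstacle I foresee is the orbital peculiarity that $\ker d^{l+r-1,r}$ contains coupled time--state generators of the form $2(s-l-k)Z_{l+k-j,j}\mathbf{e}^{k+1}+E_{l+k-j,j}\mathbf{e}^{l+k+r+1}$, described immediately after~\eqref{MatrxDr+1}. Such couplings are absent in the purely state-variable setting of Section~\ref{Sec3}, so a three-way decomposition of $\ker d^{l+r,l+r}$ into pure time rescalings $Z_{m,n}$, pure state generators $E_{m,n},\Theta^i_{m,n}$, and coupled pairs is needed. On each piece one verifies, using the structure constants~\eqref{Struc} together with the identity $Z_{m,n}v_0=\omega_1\Theta^1_{m,n}+\omega_2\Theta^2_{m,n}$, that the contribution to $d^{l+r+1,l+r+1}(S)$ either lands in $\im d^{l+r+1,r+1}$ or inside $\rad\LST$, the latter having already been accounted for by the reduced map $\hat d^{l+r,r+1}$ of Theorem~\ref{r+1Orb}. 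A filtration-topology limit of $\ker d^{l+r,l+r}$ as $l\to\infty$, as in Theorem~\ref{InfNFCor}, then collapses to the trivial symmetry generator $v^{(\infty)}-v_0$, confirming that the $(r{+}1)$-st level form coincides with the infinite level orbital normal form.
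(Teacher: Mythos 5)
Your proposal follows essentially the same route as the paper: Lemma \ref{lemRank} and Theorem \ref{r+1Orb} give the explicit values of \(\dd_l\) under the full-rank hypothesis, and stabilization at level \(r+1\) is obtained from the explicit description of \(\ker d^{l+r,l+r}\), including the coupled time--state generators, so that \(d^{l+r+1,l+r+1}\) lands in \({\rm im}\, d^{l+r+1,r+1}\). The one detail worth making explicit is that \(p>q\) together with the rank condition forces \(q=0\) (via Proposition \ref{3.7Rem}), which is what produces the single surviving term \(b_{r,0}E_{r,0}\) in the displayed normal form.
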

\begin{proof} By Proposition \eqref{3.7Rem}, \(\rank\left[\mathcal{M}_{r}^s \quad \mathcal{M}_s^r \right] =r+s+1\) and \(p>q\) imply that \(q=0\). By Lemma \ref{lemRank}, Theorem \ref{r+1Orb}, \(m=s-1,\)
\begin{equation}
\dd_l=  \left\{ \begin{array}{lcc}
l+1-p        & \text{when }       & p-1\leq l\leq s-1,\\ [0.7ex]
s-p          & \text{for}       & l>s-1,
\end{array}\right. \quad\hbox{ and } \; \dd_l=0\, \hbox{ for }\, l< p-1.
\end{equation} For \(1\leq l\leq p-1,\) Theorem \ref{r+1Orb} implies that \(E_{l+r-j, j}\)-terms for \(0\leq j\leq l\) and \(p\leq j\leq p+l+r-s\) are normalized. However, \(E_{l+r-j, j}\)-terms for \(l+1\leq j\leq p-1\) and \(p+l+r-s+1\leq j\leq l+r\) cannot be simplified. In the cases of \(l\geq s-1,\) all Eulerian terms of grade \(l+r\) are normalized. Now we show that
\begin{equation}
d^{l+r+1, l+r+1}(T, S)=d^{l+r+1, r+1}(T_{l+1}, T_{l+2}, \cdots , T_{l+r+1}, S_{l+1},S_{l+2}, \cdots , S_{l+r+1})
\end{equation} for all \(l\geq 1,\) where $T=(T_{1}, T_{2}, \cdots , T_{l+r+1})\) and \(S:=(S_{1},S_{2}, \cdots , S_{l+r+1})$. Assume that \(1\leq l\leq s-2\). Since all terms \(E_{l+r-j, j}\) for \(l\geq s-1\) and \(0\leq j\leq l+r\) are simplified in \(r+1\)-th level, \(\ker d^{l+r, l+r}\) is given by
\begin{eqnarray*}
&\Scale[0.855]{\sum_{k=1, j=0}^{r-s, l+k} \mathbb{R}\frac{2(s-l-k)Z_{l+k-j, j}\mathbf{e}^{l+k}_{2l+2r}+ E_{l+k-j, j}\mathbf{e}^{2l+k+r}_{2l+2r}}{2}
\!+\!\Span_{k=1, j=0}^{s, l+r-s+k}\{E_{l+r-s+k-j, j}\mathbf{e}^{2l+k+2r-s}_{2l+2r}, Z_{l+r-s+k-j, j}\mathbf{e}^{l+k+r-s}_{2l+2r}\}.}&
\end{eqnarray*}
By Lemma \ref{lemRank} for \(l\geq s-1,\) \(\ker d^{l+r+1, l+r+1}\) is described as
\begin{eqnarray*}
&\Span_{k=1, j=0}^{s, l-r+s+k} \{\left(\0, Z_{l-r+s+k-j, j}, \0\right)\!,\!\left(\0, E_{l-r+s+k-j, j}, \0\right)\}\!+\!\sum_{k=1, j=0}^{s, l-2s+k}\mathbb{R}\Theta^i_{l-2s+k-j, j}\mathbf{e}^{2l+k+2r-s}_{2l+2r}.&
\end{eqnarray*} Hence for \(1\leq l\leq s-2,\)
\begin{equation*}
d^{l+r+1, l+r+1}\!\left(T\!,\!T_{l+r+1}\!,\!S,\!S_{l\!+\!r\!+\!1} \right)\!=\!T_{l\!+\!1}v_r+ T_{l+r-s\!+\!1}v_s+ \left[ S_{l+1},\! v_r \right]+ \left[ S_{l+r-s+1},\! v_s \right]\!\in\! {\rm im}\,d^{l+r+1, r+1},
\end{equation*} where $T_{l+k}, S_{l+k}$ are defined by equations \eqref{Yk}, \(\bar{c}_{l+k-j, j}\!=\!2(s-l-k)c_{l+k-j, j}\) for \(1\leq k\leq r-s,\) and \(d_{l+k-j,j}^i=0\) for \(1\leq k\leq s,\) \(0\leq j\leq l+k,\) \(i=1, 2\). Similarly for \(l\geq s-1,\) we have
\begin{equation*}
d^{l+r+1, l+r+1}\left(T, T_{l+r+1}, S, S_{l+r+1} \right)= \left[ S_{l+r-s+1}, v_s \right]\in {\rm im }\, d^{l+r+1, s+1}\subseteq{\rm im }\, d^{l+r+1, r+1}
\end{equation*} where \(c_{l+r-s+1-j, j}=0\) for all \(0\leq j\leq l+r-s+1\). This completes the proof.
\end{proof}

\begin{thm}\label{s1r2}
When \(b_{1, 0}=0\) and \(b_{0, 1}b_{2,0}\neq0\) in equation \eqref{Eq2}, the second level extended partial orbital normal form of \(v^{(1)}\) is given by
\ba\label{s1r22nONFEq}
&\Scale[0.95]{v^{(2)}=v_0\!+\!\sum_{i+j= 1}\left(a^{1(2)}_{i,j}\Theta_{i,j}^1\!+\!a^{2(2)}_{i,j}\Theta_{i,j}^2+b_{i,j}E_{i,j}\right)\!+\!\sum^\infty_{j=2} b^{(2)}_{j, 0}E_{j, 0}+ \sum_{j\geq 2} a^{1(2)}_{j, 0}\Theta_{j, 0}^1, }&
\ea and \(b^{(2)}_{2, 0}=b_{2, 0}.\) Further, the infinite level orbital normal form of \(v^{(1)}\) is
\ba\label{s1r2 Third Level Orbital NF}
&v^{(3)}=v_0\!+\!\sum_{i+j= 1}\left(a^{1(2)}_{i,j}\Theta_{i,j}^1+a^{2(2)}_{i,j}\Theta_{i,j}^2+b_{i,j}E_{i,j}\right) +  b_{2, 0}E_{2, 0}+ \sum_{j\geq 2} a^{1(3)}_{j, 0}\Theta_{j, 0}^1.&
\ea
\end{thm}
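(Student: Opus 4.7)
My plan is to specialize the orbital-normalization machinery of Section~\ref{Sec4} to the parameters forced by the hypothesis. From $b_{1,0}=0$ and $b_{0,1}\neq 0$, equation \eqref{sp} gives $(s,p)=(1,1)$; from $b_{2,0}\neq 0$, equation \eqref{rq} gives $(r,q)=(2,0)$. Thus we are in the regime $p>q$ covered by Proposition~\ref{cor4.4}, and the generic nondegeneracy condition $b_{0,1}^2b_{2,0}-b_{0,1}b_{1,0}b_{1,1}+b_{0,2}b_{1,0}^2\neq 0$ collapses under $b_{1,0}=0$ to $b_{0,1}^2b_{2,0}\neq 0$, which is precisely the present hypothesis.

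To derive \eqref{s1r22nONFEq}, I would apply Lemma~\ref{1-st Lem Orbital NormalForm} at $(s,p)=(1,1)$. The only delicate grade is $2s=2$: the time-rescaling coefficients $\bar{c}_{1,0},\bar{c}_{0,1}$ produced by the recurrence \eqref{RecurRelations} eliminate $E_{1,1}$ and $E_{0,2}$ (the indices $j=p,\ldots,p+s=1,2$), but leave $E_{2,0}$ untouched because $j=0<p$ lies outside the reachable range, so $b^{(2)}_{2,0}=b_{2,0}$ is preserved. One must also check that no other second-level transformation can alter this coefficient: $T_k\cdot v_0$ is purely rotational by \eqref{3rdlemma}, and any grade-$2$ bracket $[E_{m,n},E_{k,l}]$ with $m+n=k+l=1$ vanishes by the structure constants \eqref{Struc}, so no Eulerian term at $(2,0)$ can be generated. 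At each higher grade $l+1\geq 3$, the kernel argument in the Lemma allows eliminating either $\Theta^1_{l-j,j}$ or $\Theta^2_{l-j,j}$ via a single time-rescaling coefficient; taking the ``dual'' choice $\bar{c}_{l-j,j}=-a^2_{l-j,j}/\omega_2$ (legitimate since $\omega_1\omega_2\neq 0$) eliminates $\Theta^2_{l-j,j}$ and retains the $\Theta^1_{l-j,j}$ coefficients, relabelled $a^{1(2)}_{l-j,j}$. Because $p=1$, the surviving Eulerian and rotational tails are concentrated on the $(j,0)$-axis, producing the final two sums in \eqref{s1r22nONFEq}.

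For \eqref{s1r2 Third Level Orbital NF}, I would invoke Proposition~\ref{cor4.4} with $(s,r,p,q)=(1,2,1,0)$. The rank hypothesis $\rank\,d^{r+s,r+1}=r+s+1=4$ has to be verified; I would do so by mirroring the column-independence argument at the end of the corollary preceding Theorem~\ref{ThmRinfty}: under $b_{1,0}=0$ and $b_{0,1}b_{2,0}\neq 0$, the three vectors $(0,b_{2,0},b_{1,1},b_{0,2})^t$, $(0,0,b_{0,1},0)^t$, $(0,0,0,b_{0,1})^t$ contributed to $[\mathcal{M}_r^s\ \mathcal{M}_s^r]$ are linearly independent. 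The conclusion of Proposition~\ref{cor4.4} then collapses: the sum indexed by $l=0,\ldots,p-1$ is empty (since $p-1=0$ and the inner index range is likewise empty), and $\sum_{l=p}^{s-2}$ is empty (as $s-2<p$); only $b_{2,0}E_{2,0}$ survives amongst the grade-$\geq 2$ Eulerians, while the $\Theta^1_{j,0}$ tail is updated to $a^{1(3)}_{j,0}$ through the third-level brackets $[S_{l+1},v_r]$ and time-rescalings $T_{l+r-s+1}v_s$.

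The principal obstacle, and the step I would handle most carefully, is confirming that no further simplification is possible beyond the third level, i.e., that $v^{(3)}=v^{(\infty)}$. I would replicate the kernel-tracking argument from the proof of Proposition~\ref{cor4.4}: for each $l\geq 1$, decompose $\ker d^{l+r,l+r}$ into (i) coupled time-state generators of the form $2(s-l-k)Z_{l+k-j,j}\mathbf{e}+E_{l+k-j,j}\mathbf{e}$, (ii) free time-rescaling generators $Z$, and (iii) rotational generators $\Theta^i$, and then use the structure constants \eqref{Struc} to show that $d^{l+r+1,l+r+1}$ on this kernel factors as $T_{l+1}v_r+T_{l+r-s+1}v_s+[S_{l+1},v_r]+[S_{l+r-s+1},v_s]\in\im\,d^{l+r+1,r+1}$. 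In the specialized setting $s=1,\,r=2$ the kernel descriptions simplify dramatically, and the filtration-topology limit of these kernels yields only rotational vector fields commuting with $v^{(3)}$, exactly as in Theorem~\ref{InfNFCor}, finishing the infinite-level conclusion.
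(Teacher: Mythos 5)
Your proposal is correct and follows essentially the same route as the paper: both specialize the general orbital machinery to \((s,p,r,q)=(1,1,2,0)\), remove \(E_{1,1}\) and \(E_{0,2}\) by the grade-one time rescalings (with \(E_{2,0}\) unreachable precisely because \(b_{1,0}=0\)), remove \(E_{j,0}\) for \(j\geq 3\) by the coupled generators \(-2(l-1)Z_{l,0}+E_{l,0}\), check \(\rank\left[\mathcal{M}_{2}^{1}\ \mathcal{M}_{1}^{2}\right]=4\), and then invoke Proposition \ref{cor4.4} for the infinite-level conclusion. The only place you depart is the rotational tail: you eliminate \(\Theta^2_{j,0}\) and retain \(\Theta^1_{j,0}\) via the ``dual'' choice \(\bar{c}=-a^2/\omega_2\) so as to match \eqref{s1r22nONFEq}, whereas Lemma \ref{1-st Lem Orbital NormalForm}, which the paper's proof cites, eliminates \(\Theta^1\) and keeps \(\Theta^2\); your choice is an equally valid normal-form style and in fact reconciles a superscript inconsistency between that lemma and Theorems \ref{r+1Orb}--\ref{s1r2} that the paper's proof leaves unaddressed.
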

\begin{proof} We have
\bes d^{2,2}\left(\frac{-b_{0, 2}}{b_{0, 1}}Z_{0, 1}+\frac{-b_{1,1}}{b_{0, 1}}Z_{1, 0},0\right)= -b_{0, 2}E_{0, 2}-b_{1,1}E_{1,1}\ees for any \(b_{0, 2}, b_{1,1}\in \mathbb{R}.\) Hence, terms of \(E_{0,2}\) and \(E_{1,1}\) are simplified in the second level orbital normal form while the normal form coefficient \(b^{(2)}_{2, 0}=b_{2, 0}\) remain unchanged. On the other hand, Theorem \eqref{1-st Lem Orbital NormalForm} implies that $E_{j, 0}$-terms \((2\leq j)\) are the only possible remaining terms in the second level orbital normal form. Thus, equation \eqref{s1r22nONFEq} is the second level extended partial orbital normal form. Since \(b^{(2)}_{2, 0}=b_{2, 0}\neq 0,\) \((s, r)=(1, 2),\) \(q=0,\) and \(p>q\). We may use $d^{l+2,3}(\frac{b_{l+2, 0}}{2 b_{2, 0}}(-2(l-1)Z_{l, 0},0, E_{l, 0},0))=-b_{l+2, 0}E_{l+2, 0}$ to simplify \(E_{j, 0}\)-terms for \(j\geq 3.\)
These give rise to the normalization of all Eulerian terms of grade \(l+2\). Thus, equation \eqref{s1r2 Third Level Orbital NF} represents the third level orbital normal form of \(v^{(2)}\) according to Theorem \ref{r+1Orb}. The block matrices \(A_l,\) \(B_l,\) \(C_l\) and \(D_l\) are obtained by removing the first column and row of $\left[  -2\mathcal{M}_{2}^{l} \quad \mathcal{M}_{1}^{l+1} \right]$. Hence, \(\rank[\mathcal{M}^2_3\quad\mathcal{M}^3_2]=4\) by Proposition \ref{3.7Rem}. Next, the third level orbital normal form \(v^{(3)}\) is the infinite level orbital normal form according to Proposition \ref{cor4.4}.
\end{proof}

\section{Multiple-input parametric normal forms} \label{Sec5}

We consider a multiple-parametric system given by equation \eqref{InpSys}. Using the primary shifts of coordinates \cite[page 373]{MurdBook}, we can eliminate the nonzero parameter-dependent constants from the system. Hence, we can assume that \(F(0, \mu)=G(0, \mu)=H(0, \mu)=0.\)
By formulas \eqref{ETheta} for the case \(v_0\), we obtain a parametric version of Lemma \ref{1stLevel}, \ie the first level extended partial parametric normal form of \eqref{InpSys} is given by
\begin{eqnarray}\label{eqp1}
&w^{(1)}=v_0+\sum_{j+j\geq 0}a^1_{j,k}(\mu) \Theta_{j,k}^1+\sum_{j+k\geq 0}a^2_{j,k}(\mu) \Theta_{j,k}^2+\sum_{j+k\geq 0}b_{j,k}(\mu)E_{j,k},&
\end{eqnarray} where \(\mu:=\left(\mu_1,\cdots,\mu_{N} \right)\) stands for the inputs, \(\m:= (m_1, m_2, \cdots, m_N),\) \(\mu^{\m}:={\mu}^{m_1}\ldots\mu_{N}^{m_N},\)
 and for \(i=1, 2,\)
\bas
&a^i_{j,k}(\mu):=\sum_{|\mathbf{m}|=0}^{\infty}\alpha_{j, k, \m}^i\mu^{\m},
b_{j,k}(\mu):=\sum_{\vert \mathbf{m}\vert=0}^{\infty}\beta_{j, k, \m}\mu^{\m}, \vert \mathbf{m}\vert:= m_1+\cdots + m_{N}, &
\eas and \(a^i_{0,0}(0)=b_{0,0}(0)=0.\) Now we assume that
\begin{equation}\label{ParS}
s:=\min\left\lbrace m\geq 1 \vert \exists j\geq 0,  b_{m-j,j}(0) \neq 0\right\rbrace, \quad p:=\min \lbrace i \vert\, b_{s-i, i}(0) \neq 0\rbrace, \quad p\leq s.
\end{equation}

\begin{thm}\label{s+1PNFThm}
Given the vector field \eqref{eqp1} and conditions \eqref{ParS}, there exist time rescaling and changes of state variables transforming \(w^{(1)}\) into the \((s+1)\)-th level extended partial parametric normal form
\begin{eqnarray}\label{s+1PNF}
&\Scale[0.95]{ w^{(s+1)}:=v_0 +\sum_{i+j=0}^{s} \left(a^2_{i,j}(\mu)\Theta^{2}_{i, j}+ b_{i,j}(\mu)E_{i,j} \right)+\sum_{l=1, j=0, j=p+l+1}^{\infty, p-1, l+s}b_{l+s-j,j}(\mu)E_{l+s-j,j}}&\\\nonumber
&\Scale[0.95]{+\sum_{i+j=s} a^1_{i,j}(\mu)\Theta^{1}_{i, j} +\sum_{l=1, j=0, j=p+l+1}^{\infty, p-1, l+s}a^2_{l+s-j,j}(\mu)\Theta^{2}_{l+s-j,j},\;\,}&
\end{eqnarray} where $b_{s-p,p}(\mu)=b_{s-p,p}(\0)\neq 0$, \(b_{i,j}(\0)=0\) for \(i+j<s\), and \(a^i_{0,0}(\0)=0\) for \(i=1, 2\).
\end{thm}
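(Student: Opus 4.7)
The plan is to adapt the non-parametric arguments of Lemma \ref{s+1level} and Lemma \ref{1-st Lem Orbital NormalForm} to the parametric setting, with the main technical point being that the hypothesis $b_{s-p,p}(0)\neq 0$ guarantees that $b_{s-p,p}(\mu)$ is a \emph{unit} in the formal power series ring $\mathbb{R}[[\mu]]$, so all the recursive division formulas carry over verbatim.

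First I would set up the parametric Lie-graded module by letting $\mathds{B}$ denote the parametric extension of $\LST$ (coefficients in $\mathbb{R}[[\mu]]$), $\mathds{A}=(\RScr[[\mu]],[\mathds{B},\mathds{B}])$ for the combined time-rescaling/state-transformation generators, and using the grading $\delta(E_{m,n})=m+n$, $\delta(\Theta_{m,n}^i)=s+m+n$. The homological map $d^{k,r}$ is defined as in \eqref{dkr}--\eqref{dkr1}, with the action $(T,S)*v:=Tv+[S,v]$ lifted to coefficients in $\mathbb{R}[[\mu]]$. This is the parametric analogue of the orbital setting of Section \ref{Sec4}, so Theorem \ref{LNF} applies and the existence of a $(s+1)$-th level extended partial normal form is guaranteed provided we identify $\IM d^{k,s+1}$ correctly.

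Next I would identify which terms can be simplified at level $s+1$. For $l\geq 1$, $l\neq s$, the parametric analogue of the kernel description in Lemma \ref{1-st Lem Orbital NormalForm} yields available time-rescaling generators $Z_{l-j,j}$ with coefficients in $\mathbb{R}[[\mu]]$. Applying the recursive formulas of type \eqref{RecurRelations1}, but now with the divisor $b_{s-p,p}(\mu)\in\mathbb{R}[[\mu]]$ (which is a unit, since its constant term equals $b_{s-p,p}(0)\neq 0$), produces transformation coefficients $c_{l-j,j}(\mu)\in\mathbb{R}[[\mu]]$ eliminating every $b_{l+s-j,j}(\mu)E_{l+s-j,j}$ for $p\leq j\leq l+p$. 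For the special case $l=s$, the time-rescaling identity $d^{2s,s+1}\bigl(\sum_j\bar c_{s-j,j}(\mu)Z_{s-j,j},\mathbf{0}\bigr)=-\sum_{j=p}^{p+s}b_{2s-j,j}(\mu)E_{2s-j,j}$ of Lemma \ref{1-st Lem Orbital NormalForm} (with $\bar c_{s-j,j}$ determined by the parametric version of \eqref{RecurRelations}) then simplifies the troublesome grade-$2s$ Eulerian terms. Finally, restricting $d^{l+s,s+1}$ to the radical ideal of rotational vector fields and using the same unit $b_{s-p,p}(\mu)$ produces coefficients $d^1_{l-j,j}(\mu)$ eliminating every $\Theta^1_{l-j,j}$ for $l\neq s$, as in the orbital case; the grade-$s$ layer $\sum_{i+j=s}a^1_{i,j}(\mu)\Theta_{i,j}^1$ survives because no time-rescaling or state-generator acts on the linear part to produce a grade-$s$ $\Theta^1$-term without touching the $b_{s-p,p}(\mu)E_{s-p,p}$ coefficient.

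The main obstacle, and what needs the most care, is ensuring that the parametric recursive division procedure stays inside $\mathbb{R}[[\mu]]$ and does not need any invertibility assumption beyond $b_{s-p,p}(0)\neq 0$. The point is that in a formal power series ring a series is invertible iff its constant term is invertible; since $b_{s-p,p}(\mu)=b_{s-p,p}(0)+O(\mu)$ and $b_{s-p,p}(0)\neq 0$, division by $b_{s-p,p}(\mu)$ in formulas \eqref{RecurRelations1} and \eqref{RecurRelations} is legitimate and yields power series coefficients, so the normalizing generators are well-defined parametric objects. Combined with the normal-form-style priority (Eulerian terms before rotational, $\Theta^1$ before $\Theta^2$) and the preservation $b_{s-p,p}(\mu)=b_{s-p,p}(0)$ (which follows because no generator we use contributes to that coefficient at level $s+1$), we obtain precisely the expression \eqref{s+1PNF}, with $a^i_{0,0}(\0)=0$ an automatic consequence of the primary shift $F(0,\mu)=G_i(0,\mu)=0$ discussed at the start of the section.
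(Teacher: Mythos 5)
Your outline of the ``generic'' part of the argument --- lifting the recursive division formulas of Lemma \ref{s+1level} and Lemma \ref{1-st Lem Orbital NormalForm} to coefficients in \(\mathbb{R}[[\mu]]\), with \(b_{s-p,p}(\mu)\) a unit because its constant term is nonzero --- is sound and is essentially what the paper means by ``a parametric version of the formulas given in the proof of Lemma \ref{1-st Lem Orbital NormalForm}.'' However, you miss the one genuinely new ingredient of the parametric setting, and as a result one of your justifications is wrong. The paper's proof begins by extending the structure constants to include brackets with the \emph{grade-zero} generators, \(\left[E_{0,0},E_{k,l}\right]=2(k+l)E_{k,l}\), \(\left[E_{0,0},\Theta^i_{k,l}\right]=-2(k+l)\Theta^i_{k,l}\), together with the time-rescaling generator \(Z_{0,0}\mu^{\m}\). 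In the non-parametric theory \(E_{0,0}\) and \(Z_{0,0}\) are unavailable (they are not near-identity generators), but \(E_{0,0}\mu^{\m}\) and \(Z_{0,0}\mu^{\m}\) with \(\m\neq\0\) have positive grade and so become legitimate transformation generators. These are used to (i) eliminate every term \(E_{s-p,p}\mu^{\m}\) with \(\m\neq\0\), which is precisely how the stated equality \(b_{s-p,p}(\mu)=b_{s-p,p}(\0)\) is obtained, and (ii) remove the parametric terms \(a^1_{0,0,\m}\Theta^1_{0,0}\mu^{\m}\) via \(\frac{-a^1_{0,0,\m}}{\omega_1}Z_{0,0}\mu^{\m}v_0\), pushing them into \(a^2_{0,0}(\mu)\Theta^2_{0,0}\); this is why \eqref{s+1PNF} contains no \(\Theta^1_{0,0}\)-deformation at all.

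Your explanation that \(b_{s-p,p}(\mu)=b_{s-p,p}(\0)\) holds ``because no generator we use contributes to that coefficient'' is therefore incorrect: in the first level parametric normal form \eqref{eqp1} this coefficient is a general power series \(\sum_{\m}\beta_{s-p,p,\m}\mu^{\m}\), and the equality is an active normalization (killing all \(\m\neq\0\) contributions with \(E_{0,0}\mu^{\m}\)-generators), not a passive preservation. Without this step your argument establishes only that \(b_{s-p,p}(\mu)\) is a unit, not that it is constant, so the theorem as stated would not follow. A smaller point: in the orbital mechanism the wholesale elimination of \(\Theta^1_{l-j,j}\)-terms for \(l\neq s\) comes from time rescaling (\(\bar c_{l-j,j}=-a^1_{l-j,j}/\omega_1\)), not from restricting \(d^{l+s,s+1}\) to the radical ideal with divisor \(b_{s-p,p}\); the latter only reaches the range \(p\leq j\leq l+p\). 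Your conclusion is right but the mechanism you name is the non-orbital one.
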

\begin{proof} We use a structure constant extension to include
\bas
&\left[E_{k,l},\Theta_{0,0}^i \right]=0,\,  \left[E_{0,0}, E_{k,l}\right]=2(k+l)E_{k,l}, \,\,
\left[E_{0,0}, \Theta_{k,l}^i\right]=-2(k+l)\Theta_{k,l}^i,  &
\eas for all \(k, l\geq 0,\) \(i=1, 2.\) Thus, the parametric terms \(E_{s-p,p}\mu^\m\) for any nonzero \(\m\in \mathbb{Z}_{\geq0}^N\) is simplified in the \(s+1\)-th level parametric normal form. We simplify parametric terms \(a^1_{0, 0, \m}\Theta^1_{0, 0}\mu^\m\) from the system by time rescaling via
\begin{footnotesize}\bas
&\frac{-a^1_{0, 0, \m}}{\omega_1}Z^0_0\mu^\m v_0= -\frac{a^1_{0, 0, \m}\omega_2}{\omega_1}\Theta^2_{0, 0}\mu^\m-a^1_{0, 0, \m}\Theta^1_{0, 0}\mu^\m.&
\eas\end{footnotesize} By parametric version of the formulas given in the proof of Lemma \ref{1-st Lem Orbital NormalForm}, the proof is complete.
\end{proof} Let
\begin{equation}\label{ParRQ}
r:=\min \left\lbrace m>s \vert\, \exists j, b_{m-j,j}(0) \neq 0 \right\rbrace, \quad q:=\min \lbrace j \vert\, b_{r-j, j}(0) \neq 0\rbrace, \quad q\leq r.
\end{equation}

\begin{thm}\label{r+1PNFThm}
Consider \(s,\) \(r<\infty,\) \(p,\) \(q\) defined by equations \eqref{ParS} and \eqref{ParRQ}. Let \(q>p.\) Then, there exist a sequence of natural numbers \(\dd_{l}\) and invertible transformations (time scaling and changes of state variables) transforming $w^{(1)}$ in \eqref{eqp1} into the \((r+1)\)-th level extended partial parametric normal form
\begin{eqnarray*}
&\Scale[0.88]{ w^{(r+1)} := v_0+ \sum_{i+j= 1}^{s}\left( a^2_{i, j}(\mu)\Theta_{i, j}^2+ b_{i, j}(\mu)E_{i, j}\right) +\sum_{l=1, j=0, j=l+p+r-s+\dd_l+1}^{\infty, p-1, l+r}b_{l+r-j, j}(\mu)E_{l+r-j, j}}&\\
&\Scale[0.88]{+\sum_{l=1, j=p+l+r-s+1}^{q-p-r+s-1, q-1}b_{l+r-j, j}(\mu)E_{l+r-j, j}+\sum_{l=1,j=0,j=p+l+1}^{r-s, p-1, s+l}b_{l+s-j, j}(\mu)E_{l+s-j, j}}&\\
&\Scale[0.88]{+\sum_{i+j=s} a^1_{i,j}(\mu)\Theta^{1}_{i, j}+\sum_{l=1, j=0 j=l+p+1}^{\infty, p-1, l+s} a^2_{l+s-j, j}(\mu)\Theta_{l+s-j, j}^2.}\qquad\quad\qquad\;\;&
\end{eqnarray*}
Here, \(b_{l+s-j, j}(\0)=0\) when \(0\leq j\leq p-1\) and \(p+l+1\leq j \leq l+s,\) for \(1\leq l\leq r-s-1.\) Furthermore,  \(b_{s-p,p}(\mu)=b_{s-p,p}(0)\neq 0\) and \(b_{r-q,q}(\mu)=b_{r-q,q}(0)\neq 0.\)
\end{thm}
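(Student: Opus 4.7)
The plan is to mimic the double-reduction strategy of Theorem \ref{InftLevel} together with the time-rescaling machinery of Theorem \ref{r+1Orb}, replacing every real coefficient by a formal power series in $\mu$. I start from $w^{(s+1)}$ given by \eqref{s+1PNF} and update the grading to $\delta(E_{m,n})=m+n$, $\delta(\Theta^i_{m,n})=r+s+m+n$ so that the next normalization step targets Eulerian terms of grade $l+r$. The single algebraic fact making the non-parametric proof go through verbatim is that $b_{s-p,p}(\mu)$ and $b_{r-q,q}(\mu)$ have nonzero constant terms and are therefore units in $\mathbb{R}[[\mu]]$.

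First I would describe $\ker d^{l+r-1,r}$ in the parametric setting, splitting it into a time-rescaling part from $\RScr[[\mu]]$, a parametric Eulerian part, and a parametric rotational part. The rotational part lies in $\rad\,\LST$ and, as in Lemma \ref{1-st Lem Orbital NormalForm} and Theorem \ref{s+1PNFThm}, produces nothing new at level $r+1$ beyond what was already eliminated at level $s+1$; so I pass to the factor Lie algebra $\hat{\mathds{B}}=\LST/\rad\,\LST$ via \eqref{dhat} and concentrate on Eulerian simplification. The representation matrix of $\hat d^{l+r,r+1}$ then has block form $[\,2(s-r)\mathcal{M}_r^l\;\;\mathcal{M}_s^{l+r-s}\;\;2(l+r-2s)\mathcal{M}_s^{l+r-s}\,]$ over $\mathbb{R}[[\mu]]$, and the linear dependence between the second and third blocks lets me drop the third by setting the corresponding transformation coefficients to zero, exactly as in \eqref{MatrxDr+1}.

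Next, the hypothesis $q>p$ tells me that the first $q$ rows and the first $p$ columns of the representation matrix have vanishing diagonal at $\mu=0$, so Eulerian terms $E_{l+r-j,j}$ with $0\leq j\leq p-1$ are unreachable at this level. Striking those rows and the first $p-q$ columns produces the parametric analogue of \eqref{SubMatrices}; the block $B_l$ is lower triangular with diagonal $b_{s-p,p}(\mu)\in\mathbb{R}[[\mu]]^{\times}$, hence invertible over $\mathbb{R}[[\mu]]$. The recursive formulas \eqref{RecurRelations1}, \eqref{recursive relations} then supply transformation coefficients in $\mathbb{R}[[\mu]]$ that kill $E_{r+s-j,j}(\mu)$ in the range $p\leq j\leq p+l+r-s+\dd_l$, where $\dd_l:=\rank(C_l-D_l B_l^{-1}A_l)$, the rank being computed over $\mathbb{R}[[\mu]]$ (equivalently at $\mu=0$). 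A parallel application of the $(s+1)$-th level map $d^{l+s,s+1}$ to intermediate grades $s<l+s<r$ produces the ``intermediate'' Eulerian block, and the mismatch of column- and row-supports when $l<q-p-r+s$ produces the ``bridge'' block, yielding the four Eulerian sums listed in $w^{(r+1)}$.

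The main obstacle is the combinatorial verification that these four index ranges partition exactly the surviving Eulerian tableau and that the pivot coefficients $b_{s-p,p}(\mu)$ and $b_{r-q,q}(\mu)$ are not disturbed at any step. The first point requires a case analysis on whether $l+1\lessgtr p-q$ and whether $l+r-s<q-p$, mirroring the breakdown in the proof of Theorem \ref{InftLevel}; each case pins down which rows and columns of the block matrix can be cleared and hence which residual terms appear. The second point reduces to the observation that every generator applied above acts nontrivially only on entries with row-index $\geq p$ or column-index $\geq q$ in the corresponding coefficient tableau, so the two pivot coefficients remain rigid in $\mu$ throughout the $(r+1)$-th level normalization, which is precisely the final assertion $b_{s-p,p}(\mu)=b_{s-p,p}(0)$ and $b_{r-q,q}(\mu)=b_{r-q,q}(0)$.
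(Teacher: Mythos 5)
Your overall architecture (start from $w^{(s+1)}$, pass to the factor algebra $\LST/\rad\,\LST$, drop the redundant third block of \eqref{MatrxDr+1}, case-split on $l+r-s+1\lessgtr q-p$, and define $\dd_l$ as a Schur-complement rank evaluated at $\mu=\0$) matches the paper's proof, which indeed runs the argument of Theorem \ref{r+1Orb} with coefficients in $\mathbb{R}[[\mu]]$. But there is a genuine gap at the final assertion. You read $b_{r-q,q}(\mu)=b_{r-q,q}(0)$ as a \emph{rigidity} statement --- that the pivot coefficient is never disturbed by the normalizing transformations --- whereas it is an \emph{elimination} statement: in $w^{(1)}$ the coefficient $b_{r-q,q}(\mu)=\sum_{\m}\beta_{r-q,q,\m}\mu^{\m}$ is generically a nonconstant power series, and the theorem claims that every parametric term $\beta_{r-q,q,\m}\mu^{\m}E_{r-q,q}$ with $\m\neq\0$ is actively removed at the $(r+1)$-th level, leaving only the constant $b_{r-q,q}(\0)$. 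Showing that the transformations do not touch this coefficient proves the opposite of what is needed.

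The missing ingredient is the genuinely parametric transformation generators that have no analogue in Sections \ref{Sec3}--\ref{Sec4}: the constant-type terms $E_{0,0}\mu^{\m}$ and $Z_{0,0}\mu^{\m}$, which are admissible near-identity generators because $|\m|>0$ gives them positive grade, acting through the extended structure constants $[E_{0,0},E_{k,l}]=2(k+l)E_{k,l}$ introduced in the proof of Theorem \ref{s+1PNFThm}. The paper pairs them as a coupled time--state generator,
\begin{equation*}
d^{r+(r+1)|\m|,\,r+1}\Bigl(\tfrac{s\,Z_{0,0}\mu^{\m}}{(r-s)b_{r-q,q}},\0_{r},\tfrac{E_{0,0}\mu^{\m}}{2(r-s)b_{r-q,q}},\0_{r}\Bigr)
=-\mu^{\m}E_{r-q,q}-\sum_{j=1}^{r}\tfrac{b_{r-j,j}}{b_{r-q,q}}\mu^{\m}E_{r-j,j},
\end{equation*}
so that the contributions on $v_s$ cancel and the leading output is exactly $\mu^{\m}E_{r-q,q}$; this is what flattens $b_{r-q,q}(\mu)$ to its constant term (the analogous step for $b_{s-p,p}$ is already done at level $s+1$). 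Your proposal never invokes these generators, so this part of the statement is unproved. A secondary slip: with $q>p$ the reduction to the block form removes the first $q$ rows and the first $q-p$ columns of $\mathcal{M}_s^{l+r-s}$; your ``first $p-q$ columns'' is the bookkeeping of the opposite case $q<p$ from Theorem \ref{InftLevel} and is vacuous here.
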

\begin{proof}
Since
\begin{equation*}
\mathbb{R}\left(\0_{r}, E_{0,0}\mu^{\m},\0_{r-1}\right)+ \mathbb{R}(2sZ_{0,0}\mu^\m,\0_{r-1}, E_{0,0}\mu^\m , \0_{r-1})\subseteq
\ker d^{r-1+(r+1)|\m|, r},
\end{equation*}
\begin{equation*}
d^{r+(r+1)|\m|, r+1}\left( \frac{sZ_{0,0}\mu^\m}{(r-s)b_{r-q,q}},\0_{r}, \frac{E_{0,0}\mu^\m}{2(r-s)b_{r-q,q}}, \0_{r} \right)= -\mu^\m E_{r-q,q}- \sum_{j=1}^{r}\frac{b_{r-j,j}}{b_{r-q,q}}\mu^\m E_{r-j,j}.
\end{equation*} We conclude that the parametric terms \(E_{r-q,q}\mu^\m\) and \(E_{s-p,p}\mu^\m\) for arbitrary nonzero \(\m\in \mathbb{Z}_{\geq0}^p\) are simplified in the $(r+1)$-th level partial parametric normal form. Eulerian terms \(a_{s+l-j, j}(\mu)E_{s+l-j, j}\) for \( 1\leq l\leq r-s-1\) and \(p\leq j\leq p+l\) are simplified in the \((s+1)\)-th level. Similar to the proof of Theorem \ref{r+1Orb}, we consider matrix representation \eqref{MatrxDr+1} and remove its third sub-matrix block. When \(l+r-s+1\leq q-p,\) the column spaces of matrices \(2(s-r)\mathcal{M}_r^l\) and \(\mathcal{M}_s^{l+r-s}\) are linearly independent and thus, \(E_{l+r-j, j}\)-terms for \( p\leq j\leq p+l+r-s\) and \(q\leq j\leq q+l\) are simplified while for \(p+l+r-s+1\leq j\leq q-1\) and \(0\leq j\leq p-1,\) \(E_{l+r-j, j}\)-terms may remain in the \(r+1\)-level normal form. Assume that \(q-p<l+r-s+1\). Similar to what is described in equations \eqref{RecurRelations}, \(E_{l+r-j, j}\)-terms for \(p\leq j\leq q-1\) can be simplified. Now we obtain the matrix
\begin{equation*}
\begin{bmatrix}
A_{l}&B_{l}\\
C_{l}&D_{l}
\end{bmatrix}
\end{equation*} by eliminating the first \(q\)-rows and \(q-p\)-columns starting from \(l+2\)-th column to \(l+q-p+1\)-th column of \([2(s-r) \mathcal{M}_{r}^{l}\quad \mathcal{M}_{s}^{l+r-s}]\); \ie we omit the first \(q-p\)-columns of \(\mathcal{M}_s^{l+r-s}\). By Lemma \ref{lemRank}, we have
\begin{equation}
\dd_l=\left\{\begin{array}{lcc}
l+1                   & \text{if}    & q-p-r+s\leq l\leq \alpha-r-2,\\
[0.7ex] \alpha-r-1    &\text{when }  & l>\alpha-r-2.
\end{array}\right.
\end{equation} Hence all \(E_{l+r-j, j}\)-terms for \(p\leq j \leq p+l+r-s+\dd_l\) can be simplified.
\end{proof}

\begin{rem}
Eulerian parametric terms
\bes a_{s+l-j, j}E_{s+l-j, j}\mu^\m\quad\;\hbox{ for }\; 1\leq l\leq r-s-1\; \hbox{ and }\; p\leq j\leq p+l\ees are also simplified in the \((s+1)\)-th level parametric normal form. Now we complete the proof by a parametric version of Theorem \ref{r+1Orb} as follows. For \(l\leq p-q-1,\) Eulerian terms \(E_{l+r−j,j}\) for \(p\leq j\leq p+l+r-s\) and \(0\leq j\leq q-1\) are simplified while \(E_{l+r−j,j}\)-terms for \(q\leq j\leq p-1\) cannot be normalized in the $r+1$-th level normalization step. When $l+1\geq p-q,$ $E_{l+r−j,j}$-terms for $p\leq j\leq p+l+r-s+\dd_l$ are simplified while these cannot be normalized when \(p+l+r-s+\dd_l+1\leq j\leq l+r\) and \(\dd_l\) is defined by \eqref{ul1} and \eqref{ul2}.
\end{rem}
\begin{cor} \label{cor 5.3}
Assume that  \( \rank \left[ \mathcal{M}_{r}^l \quad \mathcal{M}_{s}^{l+r-s} \right]=r+s+1 \) and \( p<q \). Then, the infinite level extended partial parametric normal form of \( w^{(1)} \) is given by
\begin{footnotesize}
\begin{eqnarray*}
&w^{(r+1)}=v_0+\sum_{i+j= 1}^{s}  \left( a^2_{i, j}(\mu)\Theta_{i, j}^2 + b_{i, j}(\mu)E_{i, j}\right)+\sum_{l=1, j=l+r-s+1, j=q+l+1}^{q-r+s-1, q-1, l+r}b_{l+r-j, j}(\mu)E_{l+r-j, j}\quad&\\
&+\sum_{l=p, j=2l+r-s+2}^{s-2, l+r}b_{l+r-j, j}(\mu)E_{l+r-j, j}+\sum_{l=1, j=0, j=l+p+1}^{r-s, p-1, l+s}b_{l+s-j, j}(\mu)E_{l+s-j, j}&\\
&+ \sum_{i+j=s} a^1_{i,j}(\mu)\Theta^{1}_{i, j}+\sum_{l=1, j=0, j=l+p+1}^{\infty, p-1, l+s} a^2_{l+s-j, j}(\mu)\Theta_{l+s-j, j}^2.\qquad\qquad\qquad\quad&
\end{eqnarray*}\end{footnotesize} Here for each \(1\leq l\leq r-s-1,\) \(b_{l+s-j, j}(\0)=0\) when \(0\leq j\leq p-1\) and \(p+l+1\leq j \leq l+s.\) Furthermore,  \(b_{s-p,p}(\mu)=b_{s-p,p}(0)\neq 0\) and \(b_{r-q,q}(\mu)=b_{r-q,q}(0)\neq 0.\)
\end{cor}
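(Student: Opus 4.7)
The plan is to derive Corollary \ref{cor 5.3} by combining Theorem \ref{r+1PNFThm} with the full-rank hypothesis to freeze $\dd_l$, and then to promote the $(r+1)$-th level normal form to the infinite level by a kernel/image analysis analogous to that carried out in the proof of Proposition \ref{cor4.4}. First I would invoke Theorem \ref{r+1PNFThm} under the assumption $p<q$ to obtain the $(r+1)$-th level extended partial parametric normal form, noting that the remark right after Theorem \ref{r+1PNFThm} describes exactly the $p<q$ case: the index $\dd_l$ vanishes in the range $l+r-s+p<q$ and is otherwise controlled by the Schur-complement block $C_l-D_lB_l^{-1}A_l$. Observing that $b_{s-p,p}(\mu)=b_{s-p,p}(0)$ and $b_{r-q,q}(\mu)=b_{r-q,q}(0)$ are pinned by the constant-part generators $Z_{0,0}\mu^{\m}$ and $E_{0,0}\mu^{\m}$ used in Theorem \ref{s+1PNFThm} and Theorem \ref{r+1PNFThm}, these non-degeneracy hypotheses carry over to all subsequent steps.

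Next, I would read the hypothesis $\rank\left[\mathcal{M}_r^l \quad \mathcal{M}_s^{l+r-s}\right]=r+s+1$ through Lemma \ref{lemRank} to conclude $\alpha:=\rank\left[\mathcal{M}_r^s \quad \mathcal{M}_s^r\right]=r+s+1$, so that the Schur-complement matrix attains its maximal rank. Substituting this value of $\alpha$ into equations \eqref{ul1}--\eqref{ul2} yields $\dd_l=l+1$ for $q-p-r+s\leq l\leq s-2$ and $\dd_l=s-p$ for $l\geq s-1$. A direct match of the corresponding index sets with those produced by Theorem \ref{r+1PNFThm} recovers the explicit four-family description of Eulerian terms in the statement, namely the truncated $E$-pieces in grades $s+l$ (from the $(s+1)$-th level), the diagonal $E$-pieces in grades $r+l$ for $l\leq s-2$, and the rotational components $\Theta^1$ of grade $s$ and $\Theta^2$ surviving in grades $l+s$; the \(\m\)-dependent vanishing \(b_{l+s-j,j}(\0)=0\) for the indicated ranges is inherited verbatim from Theorem \ref{r+1PNFThm}.

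Finally, to upgrade the $(r+1)$-th level to the infinite level, I would reproduce the collapse identity of Proposition \ref{cor4.4}, namely
\begin{equation*}
d^{l+r+1, l+r+1}(T, S)=d^{l+r+1, r+1}(T_{l+1}, \ldots, T_{l+r+1}, S_{l+1}, \ldots, S_{l+r+1})
\end{equation*}
for all $l\geq 1$, by describing $\ker d^{l+r, l+r}$ after the $(r+1)$-th level truncation. Splitting into the ranges $1\leq l\leq s-2$ and $l\geq s-1$, one uses the same coupled time-state transformation generators as in Proposition \ref{cor4.4} (paired $Z$- and $E$-generators with matching coefficient $\bar{c}_{l+k-j,j}=2(s-l-k)c_{l+k-j,j}$) so that every bracket term arising in a higher-level generator reduces to a combination of $[S_{l+1},v_r]$, $[S_{l+r-s+1},v_s]$, and the time-rescaling image $T_\bullet v_\bullet$, all of which already lie in $\IM d^{l+r+1,r+1}$; the remaining radical-ideal part sits in $\IM d^{l+r+1,s+1}\subseteq\IM d^{l+r+1,r+1}$. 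The main obstacle will be the parametric bookkeeping: one must verify that each pivoting step preserves the frozen pivot values $b_{s-p,p}(\mu)=b_{s-p,p}(0)$ and $b_{r-q,q}(\mu)=b_{r-q,q}(0)$ simultaneously for all multi-indices $\m$, while consuming only generators that live in the image of $d^{\bullet,r+1}$. Once this parametric consistency is checked, the filtration topology argument of Proposition \ref{cor4.4} applies unchanged, and $w^{(r+1)}$ is the infinite level extended partial parametric normal form.
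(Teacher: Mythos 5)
Your proposal follows essentially the same route as the paper: invoke Theorem \ref{r+1PNFThm} for the \(p<q\) case, feed the full-rank hypothesis through Lemma \ref{lemRank} to evaluate the Schur-complement ranks \(\dd_l\), and then promote the \((r+1)\)-th level to the infinite level by the collapse identity of Proposition \ref{cor4.4} (the paper leaves this last step implicit, so making it explicit is a welcome addition). There is, however, one missing step that the paper makes its very first move: Proposition \ref{3.7Rem} together with \(\rank\left[\mathcal{M}_r^s\ \mathcal{M}_s^r\right]=r+s+1\) and \(p<q\) forces \(p=0\) (a nonzero \(p\) would leave the first row of both blocks zero, contradicting full row rank). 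Without this, your asserted value \(\dd_l=s-p\) for \(l\geq s-1\) is inconsistent with the formula \(\dd_l=\alpha-r-1=s\) in the proof of Theorem \ref{r+1PNFThm}; the two agree only once \(p=0\) is established, and the same deduction is what makes the index ranges in the corollary (several of which become empty) line up. A smaller point: the remark immediately following Theorem \ref{r+1PNFThm} treats the complementary case \(p>q\); the \(p<q\) bookkeeping you want is in Theorem \ref{r+1PNFThm} itself and in the remark after Theorem \ref{r+1Orb}. With the \(p=0\) deduction inserted, your argument matches the paper's.
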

\begin{proof}
In this case, Proposition \ref{3.7Rem} implies \(p=0\). For \(1\leq l\leq q-r+s-1,\) Theorem \ref{r+1Orb} concludes that \(E_{l+r-j, j}\)-terms for \(0\leq j\leq l+r-s\) and \(q\leq j\leq q+l\) are simplified. Since \(\dd_l=l+1\) for \(q-r+s\leq l\leq s-1,\) \(E_{l+r-j, j}\)-terms for \(0\leq j\leq 2l+r-s+1\) are also normalized. However \(\dd_l=s\) for \(l>s-1\). Thus, all Eulerian terms of grade \(l+r\) can be normalized. Proof is complete by Theorem \ref{r+1PNFThm}.
\end{proof}
\begin{rem}\label{rem 5.5}
When \( p>q \) and \(\rank\left[\mathcal{M}_{r}^l\quad\mathcal{M}_{s}^{l+r-s}\right]=r+s+1\), parametric normal form follows a parametric version of Proposition \ref{cor4.4} and Theorem \ref{r+1PNFThm}. A similar argument to the case \(p>q\) in the \(r+1\)-level orbital normal form implies that the \(r+1\)-level normalization gives rise to an infinite level parametric normal form.
\end{rem}

\begin{thm}[The case \(r=\infty\)]\label{r=infty PNF}\begin{itemize}
\item[1.] Let \(r=\infty\) and \(s<\infty.\) Then, the infinite level parametric normal form is given by equation \eqref{s+1PNF} where \(E_{n-j, j}(0)=0\) for all \(s\neq n\geq 0\) and \(E_{s-p, p}(\mu)=E_{s-p, p}(0).\)
  \item[2.] Assume that \(r=\infty\) and \(s=\infty.\) Either the nonparametric part of the vector field is linearizable or there exists a
  natural number \(s_2\) so that its \(s_2+1\)-th level parametric normal form is
\begin{eqnarray}\label{s=infty, s2<infty PNF}
 \Scale[0.88]{ w^{(s_2+1)}:=v_0\!+\!\sum_{i+j=0}^{\infty}b_{i, j}(\mu)E_{i, j}\!+\!\sum_{i+j=1}^{s_2}a^2_{i, j}(\mu)\Theta^2_{i, j}
 \!+\!\sum_{l=1, j=0, l+p_2+1}^{\infty, p_2-1, l+s_2}a^2_{l+s_2-j, j}(\mu)\Theta^2_{l+s_2-j, j}}
  \end{eqnarray} where \(a^2_{s_2-p_2, p_2}=a^2_{s_2-p_2, p_2}(0)\neq 0.\) The vector field \( w^{(s_2+1)}\) is the infinite level parametric normal form.
  \end{itemize}
\end{thm}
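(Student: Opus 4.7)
The plan is to extend the arguments of Theorems \ref{ThmRinfty} and \ref{r=infty ONF thm}, which handle the non-parametric and orbital analogs, to the parametric setting of \eqref{eqp1}.

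For Item 1, I would start from the $(s+1)$-th level extended partial parametric normal form given by Theorem \ref{s+1PNFThm} and show that no further simplification is possible at higher levels, i.e.\ $\im d^{l+s+1,l+s+1}=\im d^{l+s+1,s+1}$ for every $l\geq 1$. Since $r=\infty$, the nonparametric Eulerian content of $w^{(s+1)}_n$ vanishes for $n>s$, so those grades contain only rotational terms plus parametric Eulerian terms that vanish at $\mu=0$. Mimicking the proof of Theorem \ref{r=infty ONF thm}(1), once the first-level time rescaling has eliminated the extra $\Theta^1$-terms and incorporated them into $\Theta^2$-coefficients, the subsequent normalization behaves like the non-orbital non-parametric case: by the structure constants $[\Theta^i_{m,n},\Theta^j_{k,l}]=0$, the only nontrivial contributions to $d^{l+s+1,l+s+1}$ come from brackets with $v_s$ and from time rescalings applied to $v_0$, both of which are already accounted for in $d^{l+s+1,s+1}$. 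A filtration-topology argument in the spirit of the one following \eqref{kerlrrinft} then seals the claim.

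For Item 2, since $s=\infty$ there are no nonparametric Eulerian terms in $w^{(1)}$. I would apply first-level time rescalings of the form $-\omega_1^{-1}a^1_{m,n}(\mu)Z_{m,n}$ acting on $v_0$, as in Lemma \ref{1-st Lem Orbital NormalForm} and Theorem \ref{r=infty ONF thm}(2), to cancel every $\Theta^1$-term while shifting $-\omega_2\omega_1^{-1}a^1_{m,n}(\mu)$ into the corresponding $\Theta^2$-coefficient. If after this elimination every remaining $\Theta^2$-coefficient vanishes at $\mu=0$ (the Eulerian coefficients at $\mu=0$ are already zero because $s=\infty$), the nonparametric part of $w^{(1)}$ equals $v_0$ and the system is formally linearizable. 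Otherwise I define $s_2$ and $p_2$ by the parametric analog of \eqref{s1p1s2p2} applied to the modified $a^2_{m,n}(0)$, and then parallel the $s_1=\infty$ portion of the proof of Theorem \ref{ThmRinfty}(2) to verify $\im d^{s_2+l+1,s_2+l+1}=\im d^{s_2+l+1,s_2+1}$, which yields the form \eqref{s=infty, s2<infty PNF}.

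The main obstacle, I expect, lies in Item 2 in controlling the parametric Eulerian terms $b_{i,j}(\mu)E_{i,j}$ that persist in \eqref{s=infty, s2<infty PNF}. In the non-parametric counterpart these terms are absent, so the kernel $\ker d^{s_2+l,s_2+l}$ admits the clean splitting given in the proof of Theorem \ref{ThmRinfty}(2); here the extended structure constants $[E_{0,0},E_{k,l}]=2(k+l)E_{k,l}$ and $[E_{0,0},\Theta^i_{k,l}]=-2(k+l)\Theta^i_{k,l}$ from the proof of Theorem \ref{s+1PNFThm} generate cross-effects that must be tracked carefully. I anticipate using the reduction through the factor Lie algebra $\LST/\rad\,\LST$ and the projected map $\hat d^{k,r}$ introduced in \eqref{dhat} to decouple the Eulerian and rotational simplification checks, thereby verifying both directions of the image equality separately and confirming that neither parametric Eulerian nor $\Theta^2$ terms can be simplified beyond the $(s_2+1)$-th level.
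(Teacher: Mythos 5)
Your proposal follows essentially the same route as the paper: Item 1 by invoking a parametric version of the $(s+1)$-level argument of Theorem \ref{s+1PNFThm}, and Item 2 by first-level time rescalings killing the $\Theta^1$-terms, the linearizability dichotomy on $s_2$, and a parametric rerun of Theorem \ref{r=infty ONF thm}(2)/Theorem \ref{ThmRinfty}(2). The only divergence is cosmetic: where you anticipate handling the surviving parametric Eulerian terms via the factor algebra $\LST/\rad\,\LST$ and $\hat{d}^{k,r}$, the paper instead chooses parameter-weighted gradings $\delta(\mu^{\m}\Theta^i_{n-j,j})=|\m|(s_2+1)+n$ and uses the bracket $\left[\mu^{\m}E_{0,0},\Theta^2_{s_2-j,j}\right]=-2s_2\mu^{\m}\Theta^2_{s_2-j,j}$ to place and dispatch those terms grade by grade.
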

\begin{proof} By a parametric version of the proof in Theorem \ref{s+1PNFThm}, equation \eqref{s+1PNF} is the infinite level parametric normal form.
Assuming that \(r=\infty\) and \(s=\infty,\) we define the grading function \(\delta(\mu^{\m}\Theta^i_{n-j, j})=|\m|+n+i-1\) for \(i=1, 2\) and \(\delta(\mu^{\m}E_{n-j, j})=|\m|+n\) for the first level parametric normal form \(v^{(1)}.\) Via a parametric version of Theorem \ref{r=infty ONF thm}, the first level reads
\begin{eqnarray}\label{first level PNF s=infty}
& w^{(1)}:=v_0+\sum_{i+j=0}^{\infty}b_{i, j}(\mu)E_{i, j}+\sum_{i+j=1}^{\infty}a^2_{i, j}(\mu)\Theta^2_{i, j}, \quad
b_{i, j}(0)=0.&
\end{eqnarray} When \(s=\infty\) and \(s_2=\infty,\) the nonparametric part of the vector field is linearizable. Otherwise, let \(s_2<\infty.\)
For this case, the grading function for \(s_2+1\)-level is updated by \(\delta(\mu^{\m}\Theta^i_{n-j, j})=|\m|(s_2+1)+n\) for \(i=1, 2\) and \(\delta(\mu^{\m}E_{n-j, j})=|\m|(s_2+1)+n\). Now by \(\left[\mu^{\m}E_{0, 0}, \Theta^2_{s_2-j, j}\right]= -2s_2\mu^{\m}\Theta^2_{s_2-j, j}\) and employing a parametric version of item 2 in Theorem \ref{r=infty ONF thm}, the claim is obtained.
\end{proof}


\begin{thm}\label{inftPar}
Assume that 
${b_{0,1}(0)}^2 b_{2,0}(0)-b_{0,1}(0)b_{1,0}(0)b_{1,1}(0)+b_{0,2}(0){b_{1,0}(0)}^2\neq 0$. When \(b_{1,0}(0)\neq 0,\) the infinite level parametric normal form of \(v\) is given by
\ba\label{InfinitePNFS1R2Case1}
\Scale[0.88]{ v^{(\infty)}:=v_0\!+\!\sum_{i+j=1}a^1_{i,j}(\mu)\Theta_{i,j}^1\!+\!\sum_{i+j=0}^1\left( a^2_{i,j}(\mu)\Theta_{i,j}^2\!+\!b_{i,j}(\mu)E_{i,j}\right)\!+\!b^{(2)}_{0, 2}(0)E_{0,2}\!+\!\sum^\infty_{j=2} a^2_{0,j}(\mu)\Theta_{0,j}^{{2(2)}}}
\ea
and \(b^{(2)}_{0,2}(0)=\frac{{b_{01}}^2b_{20}-b_{01}b_{10}b_{11} + {b_{10}}^2b_{02}}{{b_{10}}^2}.\) Here \(b_{ij}\) denotes \(b_{i, j}(0).\) For \(b_{10}=0, b_{01}\neq 0\), the infinite level parametric normal form of \(v\) is expressed as
\ba\label{InfinitePNFS1R2Case2}
\Scale[0.88]{v^{(\infty)}:=v_0\!+\!\sum_{i+j=1}\!a^1_{i,j}(\mu)\Theta_{i,j}^1\!+\!\sum_{i+j=0}^1\!\left( a^2_{i,j}(\mu)\Theta_{i,j}^2\!+\!b_{i,j}(\mu)E_{i,j}\right)\!+\!b^{(2)}_{2, 0}(0)E_{2, 0}\!+\!\sum^\infty_{j=2} a^2_{j, 0}(\mu)\Theta_{j, 0}^{2(2)}}
\ea
and \(b^{(2)}_{2, 0}(0)={b_{01}}^{-2}({b_{01}}^2b_{20}-b_{01}b_{10}b_{11} + {b_{10}}^2b_{02}).\)
\end{thm}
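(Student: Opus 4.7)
The plan is to reduce both cases to the general machinery of Theorems \ref{s+1PNFThm}, the parametric analogue of Corollary \ref{cor 5.3} (for $p<q$), and Remark \ref{rem 5.5} (for $p>q$), after identifying the indices $s,p,r,q$ and computing one updated second-level coefficient in each case. In Case 1 where $b_{1,0}(0)\neq 0$ I would set $s=1$, $p=0$, and first invoke Theorem \ref{s+1PNFThm} to put $v$ into the second-level extended partial parametric normal form $w^{(2)}$. Since a grade-2 time rescaling generator $\alpha Z_{1,0}+\beta Z_{0,1}$ acts on $v^{(1)}_1=b_{1,0}E_{1,0}+b_{0,1}E_{0,1}$ by
\[
(\alpha Z_{1,0}+\beta Z_{0,1})\cdot v^{(1)}_1 = \alpha b_{1,0}E_{2,0}+(\alpha b_{0,1}+\beta b_{1,0})E_{1,1}+\beta b_{0,1}E_{0,2},
\]
the unique choice of $(\alpha,\beta)$ killing $E_{2,0}$ and $E_{1,1}$ feeds back into $E_{0,2}$ and produces
\[
b^{(2)}_{0,2}(0)=\frac{b_{0,1}^{2}b_{2,0}-b_{0,1}b_{1,0}b_{1,1}+b_{1,0}^{2}b_{0,2}}{b_{1,0}^{2}}.
\]
The non-degeneracy hypothesis then gives $b^{(2)}_{0,2}(0)\neq 0$, so $r=2$ and $q=2$, which is the regime $p<q$.

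Next I would verify the rank hypothesis $\rank[\mathcal{M}^{l}_{r}\ \mathcal{M}^{l+r-s}_{s}]=r+s+1=4$ required by Corollary \ref{cor 5.3}: this is exactly the statement, already used in the non-parametric Corollary following Theorem \ref{InfNFCor}, that the three column vectors $(0,b_{2,0},b_{1,1},b_{0,2})^{t}$, $(0,b_{1,0},b_{0,1},0)^{t}$, $(0,0,b_{1,0},b_{0,1})^{t}$ are linearly independent, which is equivalent to the non-degeneracy condition. Corollary \ref{cor 5.3} then erases every Eulerian term of grade $\geq 3$ and every rotational term allowed by the $(s+1)$-th level except those in the single family $\Theta^{2}_{0,j}$, yielding exactly \eqref{InfinitePNFS1R2Case1}.

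For Case 2 where $b_{1,0}(0)=0$ and $b_{0,1}(0)\neq 0$ I would run the same program with $s=1$, $p=1$. The same computation, now with the roles of the two rotational blocks swapped, produces
\[
b^{(2)}_{2,0}(0)=\frac{b_{0,1}^{2}b_{2,0}-b_{0,1}b_{1,0}b_{1,1}+b_{1,0}^{2}b_{0,2}}{b_{0,1}^{2}},
\]
which reduces consistently to $b_{2,0}$ when $b_{1,0}=0$ and is nonzero by hypothesis. Hence $r=2$, $q=0$, and we are in the regime $p>q$. The rank condition continues to hold for the same linear-independence reason, so Remark \ref{rem 5.5} (i.e.\ the parametric version of Proposition \ref{cor4.4} combined with Theorem \ref{r+1PNFThm}) identifies the third-level parametric normal form with the infinite level, and it coincides with \eqref{InfinitePNFS1R2Case2}.

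The main technical step, and the only one that is not a direct citation of earlier results, is the explicit derivation of the updated coefficients $b^{(2)}_{0,2}(0)$ and $b^{(2)}_{2,0}(0)$; this requires tracking the joint action of the grade-$2$ time rescaling and the grade-$1$ state transformation produced by the proof of Theorem \ref{s+1PNFThm} on the surviving Eulerian mode in the $s+1$-th level, which follows from the structure constants in Proposition \ref{structure constant 1} extended by \eqref{3rdlemma}. Once this algebraic identity is in hand, both cases fit the hypotheses of the cited parametric corollaries/remarks and the conclusions \eqref{InfinitePNFS1R2Case1}--\eqref{InfinitePNFS1R2Case2} follow.
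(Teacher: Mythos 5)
Your proposal is correct and follows essentially the same route as the paper: identify $s=1$ and $p$ from the hypothesis, apply Theorem \ref{s+1PNFThm}, compute the updated coefficient $b^{(2)}_{0,2}(0)$ (resp.\ $b^{(2)}_{2,0}(0)$) from the grade-two time-rescaling generator exactly as in the paper's recursive relations, read off $r=2$ and $q$, and invoke the $(r+1)$-level parametric results together with the rank-$4$ condition (equivalent to the stated non-degeneracy) to conclude that the third level is the infinite level. Your explicit computation of $\beta b_{0,1}$ reproduces the paper's value of $b^{(2)}_{0,2}(0)$, and your appeal to Corollary \ref{cor 5.3} and Remark \ref{rem 5.5} packages the same final steps the paper carries out via Theorem \ref{r+1PNFThm} and Proposition \ref{cor4.4}.
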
\!
\bpr Since \({b_{0,1}(0)}^2 b_{2,0}(0)-b_{0,1}(0)b_{1,0}(0)b_{1,1}(0)+b_{0,2}(0){b_{1,0}(0)}^2\neq 0\),
\bas&\sum_{j=0}^{1}{b_{j, 1-j}(0)}^2\neq 0, \quad\sum_{j=0}^{2}{b_{j, 2-j}(0)}^2\neq 0, \quad \text{and} \quad s=1.&\eas When \(b_{1, 0}(0)\neq 0,\) \(p=0\). Consider the normal form coefficients \(b_{20\m}\) and \(b_{11\m}\in\mathbb{R}.\) Hence by Theorem \ref{s+1PNFThm}, the second level parametric normal form is
\[\Scale[0.9]{v^{(2)}:=v_0+\sum_{i+j=0}^1\left( a^2_{i,j}(\mu)\Theta_{i,j}^2+b_{i,j}(\mu)E_{i,j}\right) +\sum_{i+j=1} a^{1}_{i,j}(\mu)\Theta^{1}_{i, j}
+ \sum^\infty_{i=2} \left(a^{2(2)}_{0,j}(\mu)\Theta_{0,j}^2+b_{0,j}^{(2)}(\mu)E_{0,2}\right)}\]

 and \(b_{0, 2}^{(2)}(0):=b_{0, 2}(0)+\frac{{b_{01}}^2b_{20}-b_{01}b_{10}b_{11}}{{b_{10}}^2}.\)
Hence, \(r=2,\) \(q=2\) and \(q>p\). Theorem \ref{r+1PNFThm} and the equation
$d^{l+3|\m|+2,3}(\frac{b_{0l+2}}{2 b_{02}}(-2(l-1)Z_{0,l},0, E_{0,l},0)\mu^\m)=-b_{0l+2}E_{0,l+2}\mu^\m$ for \(\m\in \mathbb{Z}_{\geq0}^N\) where
\bas
&\sum^l_{j=0}\mathbb{R}(-2(l-1)Z_{l-j,j},0, E_{l-j,j},0)\mu^\m\subseteq \ker d^{l+3|\m|+1, 2},&
\eas imply that the third level parametric normal form of \(v^{(2)}\) is given by equation \eqref{InfinitePNFS1R2Case1}. Hence,  \(\rank[\mathcal{M}^2_3\quad\mathcal{M}^3_2]=4\) by Proposition \ref{3.7Rem}. Proposition \ref{cor4.4} concludes that the normal form vector field \(v^{(3)}\) is the infinite level parametric normal form. The block matrices \(A_l,\) \(B_l,\) \(C_l\) and \(D_l\) are obtained by removing the first two rows of $\left[-2\mathcal{M}_{2}^{l} \quad \mathcal{M}_{1}^{l+1} \right]$ and two columns (\(l + 2\)-th and \(l + 3\)-th) from $\left[-2\mathcal{M}_{2}^{l} \quad \mathcal{M}_{1}^{l+1} \right].$ Then,
\begin{eqnarray*}
\begin{bmatrix}
B_l \\
D_l
 \end{bmatrix}=\begin{bmatrix}
           b_{1, 0}  & b_{0, 1}  &  0            & \cdots          &0\\
            0        & b_{1, 0}  &  \ddots       &\ddots           &\vdots\\
           \vdots    & \ddots    & \ddots        &\ddots           &0\\
           \vdots    & \ddots    &\ddots         & b_{1, 0}        &b_{0, 1}\\
            0        & \cdots    &\cdots         &0                & b_{1, 0}
                 \end{bmatrix}^t, 
&
\end{eqnarray*}
\(B_l= b_{1, 0}I_{l\times l}+b_{0, 1}
\begin{bmatrix}
\0_{1\times (l-1)} & 0\\
I & \0
\end{bmatrix},\)
\(A_l=-2\left[b_{0, 2}I_{l\times l}\,\,\0_{l\times 1}\right],\) \(C_l=-2\left[\0_{1\times l}\,\,b_{0, 2} \right],\) and \(D_l=\left[\0_{1\times (l-1)}\,\,b_{0, 1} \right].\) By Proposition \ref{cor4.4}, \(\dd_l=1\) for all \(l\geq 1\).
For \(b_{1,0}=0\) and \(b_{0, 1}\neq 0,\) \(p=1\). By Theorem \ref{s+1PNFThm}, the second level parametric normal form is read by
\[\Scale[0.9]{v^{(2)}:=v_0+\sum_{i+j=0}^1\left( a^2_{i,j}(\mu)\Theta_{i,j}^2+b_{i,j}(\mu)E_{i,j}\right) +\sum_{i+j=1} a^{1}_{i,j}(\mu)\Theta^{1}_{i, j}
+ \sum^\infty_{j=2} \left(a^{2(2)}_{j, 0}(\mu)\Theta_{j, 0}^2+b_{j, 0}^{(2)}(\mu)E_{2, 0}\right)}.\]
 By \({b_{0,1}(0)}^2 b_{2,0}(0)-b_{0,1}(0)b_{1,0}(0)b_{1,1}(0)+b_{0,2}(0){b_{1,0}(0)}^2\neq 0\), \(b_{2, 0}^{(2)}(0)=b_{2, 0}(0)\neq 0,\) \(r=2\) and \(q=0\). Now by parametric version of Theorem \ref{s1r2} and Theorem \ref{r+1PNFThm}, the third level parametric normal form of \(v^{(2)}\) is given in equation \eqref{InfinitePNFS1R2Case2}. By Remark \ref{rem 5.5}, the vector field \eqref{InfinitePNFS1R2Case2} is an infinite level parametric normal form.
\epr

\section{First level normal form coefficients }\label{Sec6}

A new and efficient algorithm is here proposed to derive the first level truncated normal form formulas for nonlinear singular
Eulerian vector fields given by
\begin{eqnarray}\label{Coefficients Vector field}
v(\z):= \omega_1\Theta_{0, 0}^1+ \omega_2\Theta_{0, 0}^2+ E_{f},\; \hbox{ for } f\in \mathbb{R}[[\z]], \z:= (z_1, w_1, z_2, w_2)\in \mathbb{C}^4 \hbox{ and } w_i=\bar{z}_i.
\end{eqnarray} All even-degree homogeneous vector fields are eliminated in the first level normal form. Thus, we always have \(f^{l}_{2k}:= \sum_{j=1}^{k} b_{k-j,j} |z_1|^{2(k-j)}|z_2|^{2j}\) for \(l\geq 2k,\) and
\(f^{l}_{2k+1}:= 0\) for \(l\geq 2k+1.\) Denote the transformation generator for simplification of grade-\(k\) homogenous part of \(v^{k-1}\) by \(E_{h_{k}},\) that is determined by
\ba\label{h_k}
&h_k:=\Scale[0.9]{\sum_{\substack{i_1+i_2+j_1+j_2=k,\\ i_1\neq j_1, i_2\neq j_2}}\left(\dfrac{I(i_1!j_1!i_2!j_2!)^{-1}}{(i_1-j_1)\omega_1+(i_2-j_2)\omega_2}\dfrac{\partial^k\,  f^{k-1}(\z)}{\partial {z_1}^{i_1} \partial {w_1}^{j_1}\partial {z_2}^{i_2}\partial {w_2}^{j_2}}\big\vert_{z=\0}\right){z_1}^{i_1} {w_1}^{j_1} {z_2}^{i_2} {w_2}^{j_2}.}\quad&
\ea

\begin{thm}\label{Coeifficient Thm}
The first level normal form of vector field \eqref{Coefficients Vector field}   is given by
\bas
&v^{(1)}:= \omega_{1}\Theta^1_{0,0}+\omega_{2}\Theta_{0,0}^2 + \sum_{k=1}^{\infty}\sum_{j=1}^{k} b_{k-j,j}E_{k-j,j}.&
\eas where \(b_{k-j,j}\)-s are given by
\ba\label{CoefficentOfNormalForm}
&b_{k-j,j}:= \dfrac{\partial^{2k}\, f^{2k-1}}{(2(k-j))!(2j)!\partial |z_1|^{2(k-j)} \partial |z_2|^{2j}}\Big\vert_{\z=\0},&
\ea while the \(n\)-jet truncation of \(f^k\) recursively follows the equation
\ba\label{EulerianRecursiveRelation}
&J^nf^{k}= J^n f^{k-1}+\left< \nabla h_{k},v_0 \right> +\sum_{i=1}^{n-k}\sum_{m=1}^{\lfloor \frac{n-i}{k} \rfloor}\,\dfrac{1}{m!}\prod_{j=2}^{m+1}\left( (j-m)k-i\right) {h_{k}}^{m}f_{i}^{k-1}, \quad&
\ea for \(n\geq k,\) and \(f^0:=f.\)
\end{thm}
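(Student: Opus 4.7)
The plan is to carry out three steps: verify the homological equation defining $h_k$, derive the recursion \eqref{EulerianRecursiveRelation} by expanding the time-one flow of $E_{h_k}$ as a Lie series inside the Eulerian subalgebra, and extract the surviving torus-invariant coefficients $b_{k-j,j}$.

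First I would establish that for any homogeneous $h_k \in \mathbb{R}[[\z]]$ of degree $k$, the generator $E_{h_k} = h_k E_{0,0}$ satisfies $[E_{h_k}, v_0] = -v_0(h_k)\,E_{0,0}$, using the identity $[fX, Y] = f[X,Y] - (Yf)X$ together with $[E_{0,0}, v_0]=0$ (the radial Euler field and the rotational $v_0$ are commuting diagonal linear vector fields in the complex coordinates $\z$). On a monomial $z_1^{i_1}w_1^{j_1}z_2^{i_2}w_2^{j_2}$ of total degree $k$, $v_0$ acts by multiplication by $I[(i_1-j_1)\omega_1+(i_2-j_2)\omega_2]$, which is invertible on every non-torus-invariant monomial by the non-resonance hypothesis $\omega_1/\omega_2\notin\mathbb{Q}$. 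Solving the homological equation coefficient-by-coefficient and applying Taylor's formula yields exactly \eqref{h_k}.

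Second, to derive the recursion for $f^k$, I would expand the push-forward of $v^{k-1} = v_0 + E_{f^{k-1}}$ by the time-one flow $\phi_1^{E_{h_k}}$ as a Lie series, exploiting the fact that the Eulerian subspace is closed under bracketing: for homogeneous $g, h$ one has $[E_g, E_h] = (\deg h - \deg g)\, E_{gh}$. The leading term $[E_{h_k}, v_0] = -v_0(h_k)E_{0,0}$ contributes the gradient-inner-product $\langle \nabla h_k, v_0 \rangle$ to the update of $f$. For higher iterates, repeated application of $\ad_{E_{h_k}}$ to $E_{f_i^{k-1}}$ generates a telescoping product of degree-difference factors as the degree of the acted-upon Eulerian term grows by $k$ at each step, while the Lie-series prefactor $1/m!$ combines with these factors to produce $\frac{1}{m!}\prod_{j=2}^{m+1}((j-m)k-i)$. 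Truncating the resulting series at the $n$-jet level yields \eqref{EulerianRecursiveRelation}.

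Third, I would identify the surviving normal-form coefficients. Since $E_{h_k}$ eliminates all non-torus-invariant grade-$k$ terms while adding only degree-$\geq k$ corrections to $f$, the sequence $\{f^k\}$ converges in the filtration topology to a function whose only surviving monomials are the torus invariants $|z_1|^{2a}|z_2|^{2b}$. All odd-degree homogeneous components of $f$ are annihilated in $v^{(1)}$ because no odd-degree polynomial in $(z, w)$ contains such invariants. At even total degree $2k$, the coefficient of $|z_1|^{2(k-j)}|z_2|^{2j}$ in $f^\ell$ stabilizes once $\ell \geq 2k-1$, since all subsequent transformations are generated by $h_\ell$ with $\ell \geq 2k$ and contribute only terms of degree $\geq 2k+1$ to $f$. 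Reading off this stable coefficient by Taylor's formula gives \eqref{CoefficentOfNormalForm}. The hardest step is the careful bookkeeping of iterated Eulerian brackets needed to match the combinatorial product $\frac{1}{m!}\prod_{j=2}^{m+1}((j-m)k-i)$: each application of $\ad_{E_{h_k}}$ produces a degree-difference factor that must be tracked against the growing degree of the intermediate Eulerian term, and the factor $1/m!$ must be properly absorbed into the telescoping product.
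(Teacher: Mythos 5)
Your proposal follows essentially the same route as the paper's proof: you solve the homological equation for \(h_k\) monomial-by-monomial using the non-resonance of \(\omega_1/\omega_2\), expand \(\exp\ad_{E_{h_k}}\) as a Lie series using the closure of the Eulerian subalgebra under bracketing to produce the combinatorial product \(\frac{1}{m!}\prod_{j=2}^{m+1}((j-m)k-i)\), and read off the surviving torus-invariant coefficients \(|z_1|^{2(k-j)}|z_2|^{2j}\) after observing that odd-degree and non-invariant terms are eliminated. Your stabilization remark (that the coefficient of a degree-\(2k\) invariant is fixed once \(\ell\geq 2k-1\)) is a slightly more explicit justification of \eqref{CoefficentOfNormalForm} than the paper gives, but the argument is the same.
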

\begin{proof}
Since
\begin{eqnarray*}
&[{z_1}^{m_1}{w_1}^{n_1}{z_2}^{m_2}{w_2}^{n_2} E_{0, 0}, v_0]=\big(\omega_1({m_1}-{n_1})+\omega_2({m_2}-{n_2})\big) {z_1}^{m_1}{w_1}^{n_1}{z_2}^{m_2}{w_2}^{n_2} E_{0, 0},&\\
&\big[\frac{{z_1}^{m_1}{w_1}^{n_1}{z_2}^{m_2}{w_2}^{n_2}}{{m_1}+{n_1}+{m_2}+{n_2}-{p_1}-{q_1}-{p_2}-{q_2}} E_{0, 0}, {z_1}^{{p_1}}{w_1}^{{q_1}}{z_2}^{{p_2}}{w_2}^{{q_2}}E_{0, 0}\big]={z_1}^{{m_1}+{p_1}}{w_1}^{{n_1}+{q_1}}{z_2}^{{m_2}+{p_2}}{w_2}^{{n_2}+{q_2}} E_{0, 0},&
\end{eqnarray*} all monomial vector fields with odd degrees can be eliminated from the first level normal form. Further, for the case of homogenous odd-degree vector fields, we can eliminate all terms except \(|z_1|^{2i_1} |z_2|^{2i_2} E_{0,0}\) for \(i_1,\) \(i_2\in \mathbb{N}.\)
For an even number \(k,\) the \(k+1\)-degree homogeneous vector field part of \(v^{k-1}\) follows
\bas
&\sum_{2i_1+2i_2=k}\left(\dfrac{\partial^k\,  f^{k-1}}{(2i_1)!(2i_2)!\partial {|z_1|}^{2i_1} \partial {|z_2|}^{2i_2}}\Big\vert_{z=\0} \right)|z_1|^{2i_1} |z_2|^{2i_2} E_{0,0}.&
\eas The transformation generator $ E_{h_{k}} $ gives rise to
\begin{eqnarray*}
&v^k:=\exp \ad_{E_{h_k}} v^{k-1} = v^{k-1}+ \sum_{m=1}^{\infty} \frac{1}{m!} \ad^{m}_{E_{h_k}} v^{k-1}= v^{k-1}+ \sum_{m=1}^{\infty} \sum^\infty_{i=0} \frac{1}{m!} \ad^{m}_{E_{h_k}} v^{k-1}_{i},&
\end{eqnarray*} and \(v_i^{k-1}\in \LST_i.\) Since $v^{k-1}_{0}=v_0,$ $v^{k-1}_{i}=E_{ f^{k-1}_{i}},$ and $ v^{k-1}:= v_0 + E_{ f^{k-1}},$
\begin{eqnarray*}
&v^{k}= v^{k-1}+\ad_{E_{h_k}}v_0 +\sum_{m=1}^{\infty}\sum_{i=1}^{\infty}\,\dfrac{1}{m!}\prod_{j=2}^{m+1}\left( (j-m)k-i\right) {h_{k}}^{m}f_{i}^{k-1} E_{0,0}&
\end{eqnarray*} and equation \eqref{EulerianRecursiveRelation} holds.
\end{proof}

%

Theorem \ref{Coeifficient Thm} introduces the following algorithm for computing a truncated first level normal form of the vector field \eqref{Coefficients Vector field}.

\begin{algorithm}[H]
\caption{Computation of a truncated first level normal form. } 
\label{alg1}
\begin{algorithmic}
\STATE {\bf Inputs:} \((v, n)\): Vector field \(v\) given by \eqref{Coefficients Vector field} and a natural number \(n.\)
\STATE {\bf Output:} \(J^{2n}v^{(1)}\): A \(2n\)-grade truncation of the first level normal form \(v^{(1)}.\) \\

    \STATE Let \(J^{2n}f^0\) be the \(2n\)-degree truncation of the scalar function \(f\) associated with \(v\).
     \STATE  Take \(\hat{v}^{(1)}:=\omega_1\Theta^1_{0,0}+\omega_2\Theta^2_{0, 0}\) and \(k:=1.\)
     \WHILE{$k\leq 2n-1$}
      \STATE Take \(J^{2n}f^{k-1}=\sum_{j=1}^{2n}f^{k-1}_j\) where \(f^{k-1}_j\) is the \(j\)-degree homogeneous polynomial part of \(f^{k-1}\) for
      \(1\leq j\leq 2n\).
      \STATE Define the transformation generator \(h_k\)  according to equation \eqref{h_k}.
        \STATE Use the equation \ref{EulerianRecursiveRelation} to simplify \(f^{k-1}_{k}\) and update \(J^{2n}f^{k-1}\) with \(J^{2n}f^{k}.\)
        \IF{$k$ is odd }
        \WHILE{$1\leq i\leq \lfloor \frac{k}{2}\rfloor+1$}
        \STATE Compute \(b_{k-i, i}\)  from equation \eqref{CoefficentOfNormalForm}.
        \STATE Let \(\hat{v}^{(1)}:=\hat{v}^{(1)}+b_{k-i, i}|z_1|^{2(k-i)}|z_2|^{2i}{E}_{0, 0}.\)
        \ENDWHILE
        \ENDIF
       \STATE Take \(k:=k+1.\)
        \ENDWHILE
        \STATE Set \(J^{2n}v^{(1)}:= \hat{v}^{(1)}.\)
       \RETURN \(J^{2n}v^{(1)}\).
\end{algorithmic}
\end{algorithm}

\begin{cor}\label{Cor6.2}
Consider the vector field \eqref{Coefficients Vector field} where
\begin{footnotesize}\begin{eqnarray*}
&f(\x):= a_1 x_1+a_2y_1 +a_3 x_2+a_4 y_2+ a_{5}{x_1}^{2}+a_{6}x_1y_1+a_{7}x_1x_2+a_{8}x_1y_2+a_{9}{y_1}^{2}+a_{10}y_1x_2+a_{11}y_1y_2&\\
&+a_{12}{x_2}^{2}+a_{13}x_2y_2+a_{14}{y_2}^{2}.\qquad\qquad\qquad\qquad\qquad\qquad\qquad\qquad\qquad\quad&
\end{eqnarray*}\end{footnotesize} The vector field \eqref{Coefficients Vector field} can be transformed to the \(6\)-jet truncated normal form
\begin{footnotesize}
\bas
&v^6(\x):= \omega_1\Theta_{0, 0}^1+ \omega_2\Theta_{0, 0}^2+ \sum_{k=1}^{3}\sum_{j=0}^{k}b_{k-j,j}E_{k-j,j}&
\eas where $b_{2,1}, b_{1,2}$ are given in appendix and Theorem \ref{Coeifficient Thm} and
\ba\nonumber
&b_{1,0} = \frac{a_{5}+ a_{9}}{2}, \,\, b_{0,1} = \frac{a_{12}+a_{14}}{2}, \,\, b_{1,1} = \frac{a_{{11}}a_{{1}}a_{{3}}-a_{{10}}a_{{4}}a_{{1}}-a_{{8}}a_{{3}}a_{{2}}+a_{{7}}a_{{4}}a_{{2}}}{2\,{\omega_1}{\omega_2}}\!+\!\frac{ {a_{{3}}}^{2}+{a_{{4}}}^{2} }{4\,{\omega_2}^2( a_{{5}}+a_{{9}})^{-1}}\!+\!\frac{ {a_{{1}}}^{2}+{a_{{2}}}^{2} }{4\,{\omega_1}^2\left(a_{{12}}+a_{{14}}\right)^{-1}}, &\\\nonumber
&b_{2,0} = \frac{a_{5}{a_{1}}^{2}+3\,a_{5}{a_{2}}^{2}-2\,a_{{6}}a_{1}a_{2}+3\,a_{9}{a_{1}}^{2}+\,a_{9}{a_{2}}^{2}}{{8\,\omega_1}^2}, \quad b_{0,2} = \frac{a_{{12}}{a_{{3}}}^{2}+3\,a_{{12}}{a_{{4}}}^{2}-2\,a_{{13}}a_{{3}}a_{{4}}+3\,a_{{14}}{a_{{3}}}^{2}+\,a_{{14}}{a_{{4}}}^{2}}{{8\omega_2}^2},\qquad\quad\, &\\\label{biCoeff}
&b_{3,0} = \frac{\left( a_{5}+a_{9} \right)  \left( {a_{1}}^{2}a_{6}-2a_{1}a_{2}a_{5}+2a_{1}a_{2}a_{9}-{a_{2}}^{2}a_{6} \right)}{16{\omega_1}^3}+\frac{\left( {a_{1}}^{2}+{a_{2}}^{2} \right)  \left( {a_{1}}^{2}a_{5}+5{a_{1}}^{2}a_{9}-4a_{1}a_{2}a_{6}+5{a_{2}}^{2}a_{5}+{a_{2}}^{2}a_{9} \right)}{16{\omega_1}^4},\qquad\quad\,&
\\\nonumber
&b_{0,3} =\frac{\left( a_{12}+a_{14} \right)  \left( {a_{3}}^{2}a_{13}-2a_{3}a_{4}a_{12}+2a_{3}a_{4}a_{14}-{a_{4}}^{2}a_{13}
\right)}{{64\omega_2}^3} +\frac{  \left( {a_{3}}^{2}a_{12}+5{a_{3}}^{2}a_{14}-4a_{3}a_{4}a_{13}+5{a_{4}}^{
2}a_{12}+{a_{4}}^{2}a_{14}\right)}{{64\omega_2}^4\left({a_{3}}^{2}+{a_{4}}^{2} \right)^{-1}}.\qquad\quad\, &
\ea \end{footnotesize}
\end{cor}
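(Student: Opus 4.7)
The plan is to apply Theorem~\ref{Coeifficient Thm} (and its algorithmic realization in Algorithm~\ref{alg1}) directly to the specific scalar function $f$ given in the statement. First, I would convert $f$ into the complex coordinates $z_i = x_i + Iy_i$, $w_i = x_i - Iy_i$ so that formulas \eqref{h_k}--\eqref{EulerianRecursiveRelation}--\eqref{CoefficentOfNormalForm} apply verbatim. In these coordinates the quadratic part of $f$ splits into a "resonant" piece (multiples of $|z_1|^2$ and $|z_2|^2$) and a non-resonant piece, while the linear part produces purely non-resonant generators of $h_1$ whose denominators are among $\pm\omega_1, \pm\omega_2$.

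To reach the $6$-jet one must iterate \eqref{EulerianRecursiveRelation} up to $f^{5}$, because by \eqref{CoefficentOfNormalForm} the coefficients $b_{k-j,j}$ for $k=1,2,3$ are obtained from $f^{1}$, $f^{3}$, $f^{5}$ respectively. At each step $k$, I compute $h_k$ via \eqref{h_k} to annihilate the degree-$k$ homogeneous part of $f^{k-1}$; the non-resonance assumption $\omega_1/\omega_2\notin\mathbb{Q}$ guarantees that every denominator appearing in \eqref{h_k} is nonzero for $k\le 6$. At odd $k=2m-1$ the degree-$(2m)$ terms that have already become resonant cannot be further reduced and yield the $b_{m-j,j}$ via \eqref{CoefficentOfNormalForm}.

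For $k=1,2$ the computation is immediate: the resonant coefficients of $f^1$ of degree $2$ give $b_{1,0}=(a_5+a_9)/2$ and $b_{0,1}=(a_{12}+a_{14})/2$ after an elementary change to complex coordinates. For $b_{2,0}, b_{1,1}, b_{0,2}$ (via $f^{3}$), the transformation $h_1$ that removes the linear part of $f^0$ contributes quadratic corrections whose magnitudes are $O(1/\omega_i)$; these, when pushed through \eqref{EulerianRecursiveRelation} at $k=2,3$ and combined with the $h_2$--$h_3$ corrections, generate precisely the $a_ia_j/\omega_1\omega_2$ and $a_i^2/\omega_i^2$ cross-terms in \eqref{biCoeff}. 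For $b_{3,0}, b_{2,1}, b_{1,2}, b_{0,3}$ (via $f^{5}$) one must further track how the quadratic corrections $h_1,h_2$ interact with $h_3,h_4,h_5$, producing the quartic dependencies $a_i^2 a_j^2/\omega_1^4$ and $a_i^2 a_j^2/\omega_2^4$.

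The main obstacle is therefore not conceptual but computational: the sheer algebraic volume of iterating \eqref{EulerianRecursiveRelation} five times with $14$ symbolic parameters and two symbolic frequencies. Each $h_k$ injects lower-order monomials back into $f^{k}$ through the $h_k^m f_i^{k-1}$ terms of \eqref{EulerianRecursiveRelation}, and these cascade into higher degrees at every subsequent step. This bookkeeping is best handled by the symbolic implementation described at the end of Section~\ref{Sec6}; once $J^{6}f^{5}$ is available in symbolic form, \eqref{CoefficentOfNormalForm} produces the claimed closed-form $b_{i,j}$ after simplification. The comparatively lengthy expressions for $b_{2,1}$ and $b_{1,2}$ are relegated to the appendix.
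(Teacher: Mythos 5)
Your proposal is correct and follows essentially the same route as the paper: pass to complex coordinates, build the generators $h_k$ from \eqref{h_k}, iterate the recursion \eqref{EulerianRecursiveRelation} up to $f^{5}$, and read off $b_{k-j,j}$ from $f^{2k-1}$ via \eqref{CoefficentOfNormalForm}, deferring the bulk symbolic algebra to the implementation. The paper's proof merely makes a few more intermediate steps explicit (the formulas for $h_1$, $h_2$ and the expressions of $J^6f^{k}-J^{k-1}f^{k-1}$ in terms of products of $h_1,h_2,h_4$ with $f^0_2$), but the strategy and justification are the same.
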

\begin{proof}
Using the changes of coordinates to the complex coordinates,
we have
\begin{eqnarray*}
&f_{0}:=f(\z)=g\left(\frac{z_1+w_1}{2},\frac{z_1-w_1}{2I},\frac{z_2+w_2}{2},\frac{z_2-w_2}{2I}\right)= f^1_0(\z)+f^2_0(\z),&
\end{eqnarray*} where \(f^1_0(\z):=  A_1 z_1+A_2 w_1 +A_3 z_2+A_4 w_2,\)
\begin{eqnarray*}
&\Scale[0.9]{f^2_0(\z)\!:=\! A_{5}{z_1}^{2}\!+\!A_{6}z_1w_1\!+\!A_{7}z_1z_2\!+\!A_{8}z_1w_2\!+\!A_{9}{w_1}^{2}\!+\!A_{10}w_1z_2\!+\!A_{11}w_1w_2 \!+\!A_{12}{z_2}^{2}\!+\!A_{13}z_2w_2\!+\!A_{14}{w_2}^{2}.}&
\end{eqnarray*} Recall $h_1(\z)$ from \eqref{CoefficentOfNormalForm} by \(h_1(\z) :=\frac{IA_1}{\omega_1}  z_1+\frac{IA_2}{-\omega_1} w_1 +\frac{IA_3}{\omega_2} z_2+\frac{IA_4}{-\omega_2} w_2.\) By applying this transformation generator and Theorem \eqref{Coeifficient Thm}, $\left< \nabla h_1,\Theta \right>=-f^0_1.$ Now we have \(b_{1,0}= \frac{\partial f^1}{\partial |z_1|^{2}}\big|_{z=0}=A_{5}= \frac{1}{2}a_{5}+\frac{1}{2}a_{9},\) \(b_{0,1}=\frac{\partial f^1}{\partial |z_2|^{2}}|_{z=0}=A_{12}= \frac{1}{2}a_{12}+\frac{1}{2}a_{14}\) and
\begin{eqnarray*}
&\Scale[0.95]{h_2(\z)\!:=\! \frac{IA_5}{2\omega_1}{z_1}^{2}\!+\! \frac{IA_7}{\omega_1+\omega_2}z_1z_2\!+\! \frac{IA_8}{\omega_1-\omega_2}z_1w_2\!-\! \frac{IA_9}{2\omega_1}{w_1}^{2}\!+\!\frac{IA_{10}}{\omega_2-\omega_1}w_1z_2
\!-\! \frac{IA_{11}}{\omega_1+\omega_2}w_1w_2\!+\! \frac{IA_{12}}{2\omega_2}{z_2}^{2}\!+\!\frac{IA_{14}}{-2\omega_2}{w_2}^{2}.}&
\end{eqnarray*} Employing \(h_2(\z),\) we obtain
\begin{eqnarray*}
&f^2 = f^1+ \left< \nabla h_2,\Theta \right> + \sum_{i=1}^{4}\sum_{m=1}^{\lfloor \frac{ 6-i}{2}\rfloor} \dfrac{1}{m!}\prod_{j=2}^{m+1}\left( (j-m)k-i\right) {h_{2}}^{m}f_{i}^{1},&\\
&f^2 = b_{1,0}|z_1|^2+b_{0,1}|z_2|^2-h_1 f^0_2 + h_1 h_2 f^0_2 +{h_1}^2 f^0_2 -2 {h_1}^2 h_{2} f^0_2 - {h_1}^3 f^0_2 + {h_1}^4 f^0_2.&
\end{eqnarray*} Similarly, \(f^i\) for \(i= 3, 4, 5\) is obtained. Then, the \(6\)-jet truncation of the first level normal form can be extracted from the following formulas:
\bas
&J^6f^3-J^2f^2 = {h_1}^2 f^0_2 + h_3(b_{1,0}|z_1|^2+b_{0,1}|z_2|^2)+ h_1 h_2 f^0_2  - {h_1}^3 f^0_2 + {h_1}^4 f^0_2-2 {h_1}^2 h_{2} f^0_2,&
\\&J^6f^4 -J^3f^3= \sum_{i+j=2}b_{i,j}|z_1|^{2i}|z_2|^{2j}+(h_3+2h_4)\sum_{i+j=1}b_{i,j}|z_1|^{2i}|z_2|^{2j}+ h_1 h_2 f^0_2&\\&
- {h_1}^3 f^0_2 -2 {h_1}^2 h_{2} f^0_2+ {h_1}^4 f^0_2,\qquad\quad &\\
&\Scale[0.96]{J^6f^5\!-\!J^4f^4\!=\!{h_1}^4 f^0_2\!-\!2{h_1}^2 h_{2} f^0_2\!+\!2h_4\sum_{i+j=1}\!b_{i,j}|z_1|^{2i}|z_2|^{2j},\,J^6f^6\!-\!J^5f^5\!=\! \sum_{i+j=3}\!b_{i,j}|z_1|^{2i}|z_2|^{2j},}&
\eas
where
\begin{eqnarray*}
&b_{i,j}=\left\{ \begin{array}{lcc}
\dfrac{\partial^{2i+2j}}{(2i)!(2j)!\partial^{2i} |z_1|^{2i}\partial^{2j}|z_2|^{2j}}\, f_2^0 & \text{for}& i+j=1\\[0.7ex]
\dfrac{\partial^{2i+2j}}{(2i)!(2j)!\partial^{2i} |z_1|^{2i}\partial^{2j}|z_2|^{2j}}\,{h_1}^2\,f^{0}_{2}&\text{for}& i+j=2,
\end{array} \right.&
\end{eqnarray*} and \(b_{i,j}=\frac{\partial^{2i+2j}}{(2i)!(2j)!\partial^{2i} |z_1|^{2i}\partial^{2j}|z_2|^{2j}}\,\left( {h_1}^4\,f^{0}_{2}-2  h_2{h_1}^2\,f^{0}_{2}+2h_4\sum_{i+j=1}b_{i,j}|z_1|^{2i}|z_2|^{2j}\right)\) for \(i+j=3.\) This gives rise to the normal form coefficients \eqref{biCoeff} and \(b_{2,1},\) \(b_{1,2}\) in the appendix.
\end{proof}
\section{Appendix}

The following normal form coefficients are associated with Corollary \ref{Cor6.2}:
\begin{footnotesize}
\begin{eqnarray*}
&b_{2,1}:=\frac{3( {a_{{1}}}^{2}(a_{{5}}+3a_{{9}})-2a_{{1}}a_{{2}}a_{{6}}+{a_{{2}}}^{2}(3a_{{5}}+a_{{9}}) )}{8{\omega_2}^2({\omega_1}^2-{\omega_2}^2)({a_{{3}}}^{2}+{a_{{4}}}^{2} )^{-1}}
+ \frac{ ( a_{{1}}a_{{3}}a_{{11}}-a_{{1}}a_{{4}}a_{{10}}-a_{{2}}a_{{3}}a_{{8}}+a_{{2}}a_{{4}}a_{{7}} )}{4{\omega_1}{\omega_2}({\omega_1}^2-{\omega_2}^2)( {a_{{1}}}^{2}+{a_{{2}}}^{2} )^{-1}}-\frac{3( {a_{{1}}}^{2}+{a_{{2}}}^{2} )^{2} ( a_{{12}}+a_{{14}} )}{16{\omega_1}^{4}{\omega_2}^{-2}({\omega_1}^2-{\omega_2}^2)}&\\
 &+\frac{3(a_{{12}}+a_{{14}})({a_{{1}}}^{2}+{a_{{2}}}^{2})^{2}-6({a_{{3}}}^{2} +{a_{{4}}}^{2})({a_{{1}}}^{2}a_{{5}}+3{a_{{1}}}^{2}a_{{9}}-2a_{{1}}a_{{2}}a_{{6}}+3{a_{{2}}}^{2}a_{{5}}+{a_{{2}}}^{2}a_{{9}})}{16{\omega_1}^{2}({\omega_1}^2-{\omega_2}^2)}
 -\frac{(a_{{1}}a_{{3}}a_{{11}}-a_{{1}}a_{{4}}a_{{10}}-a_2a_3a_8+a_2a_{{4}}a_{{7}})}
 {4{\omega_1}^{3}{\omega_2}^{-1}({\omega_1}^2-{\omega_2}^2)({a_{{1}}}^{2}+{a_{{2}}}^{2})^{-1}}&\\
& +\frac{(a_{{3}}a_{{8}}-a_{{4}}a_{{7}} ) ( 3a_{{1}}a_{{5}}+5a_{{1}}a_{{9}}-a_{{2}}a_{{6}} ) - ( a_{{3}}a_{{11}}-a_{{4}}a_{{10}} )  ( a_{{1}}a_{{6}}-5a_{{2}}a_{{5}}-3a_{{2}}a_{{9}} ) }{8{\omega_2}{\omega_1}^{3}({\omega_1}^2-{\omega_2}^2)}-\frac{ ( a_{{12}}+a_{{14}} )( {a_{{1}}}^{2}a_{{6}}-2a_{{1}}a_{{2}}a_{{5}}+2a_{{1}}a_{{2}}a_{{9}}-{a_{{2}}}^{2}a_{{6}})}{16{\omega_1}^{3}{\omega_2}^{-2}({\omega_1}^2-{\omega_2}^2)}
&\\
&+\frac{ ( {a_{{1}}}^{2}a_{{6}}-2a_{{1}}a_{{2}}a_{{5}}+2a_{{1}}a_{{2}}a_{{9}}-{a_{{2}}}^{2}a_{{6}})}{16{\omega_1}({\omega_1}^2-{\omega_2}^2)(a_{{12}}+a_{{14}} ) ^{-1}}\!-\!\frac{a_{{1}}a_{{2}}( {a_{{7}}}^{2}+{a_{{8}}}^{2}\!-\!{a_{{10}}}^{2}\!-\!{a_{{11}}}^{2} ) \!-\!({a_{1}}^2-{a_{2}}^2)(a_{{10}}a_{{7}}+a_{{8}}a_{{11}})}{4{\omega_1}({\omega_1}^2-{\omega_2}^2)}
\!+\!\frac{{\omega_1}^{-1}(a_{{1}}a_{{5}}+3a_{{1}}a_{{9}}\!-\!a_{{2}}a_{{6}})}{4({\omega_1}^2-{\omega_2}^2)(a_{{3}}a_{{10}}+a_{{4}}a_{{11}})^{-1}}&\\
& -\frac{ ( a_{{3}}a_{{7}}+a_{{4}}a_{{8}} )  ( a_{{1}}a_{{6}}-3a_{{2}}a_{{5}}-a_{{2}}a_{{9}} )}{4{\omega_1}({\omega_1}^2-{\omega_2}^2)}-\frac{(a_{{1}}a_{{3}}a_{{8}}-a_{{1}}a_{{4}}a_{{7}}-a_{{2}}a_{{3}}a_{{11}}+a_{{2}}a_{{4}}a_{{10}} ) +a_{{3}}a_{{6}} ( a_{{1}}a_{{11}}+a_{{2}}a_{{8}} ) -a_{{4}}a_{{6}} ( a_{{1}}a_{{10}}+a_{{2}}a_{{7}})}{8{\omega_1}^{2}{\omega_2}^{-1}({\omega_1}^2-{\omega_2}^2)(a_{{5}}-a_{{9}})^{-1}},&
\end{eqnarray*}
\begin{eqnarray*}
&b_{1, 2}:=\frac{3 ({a_{{3}}}^{2}+{a_{{4}}}^{2})^{2}( a_{{9}}+a_{{5}} )}{16{\omega_1}^{-2}{\omega_2}^4({\omega_1}^2-{\omega_2}^2)}-\frac{a_{{13}}({a_{{4}}}^{2}-{a_{{3}}}^{2})+2a_{{4}}a_{{3}}(a_{{12}}-a_{{14}})}{16{\omega_1}^{-2}{\omega_2}^{3}(a_{{9}}+a_{{5}})^{-1}({\omega_1}^2-{\omega_2}^2)}
-\frac{3((a_{{2}}a_{{8}}-a_{{1}}a_{{11}})a_{{3}}+a_4(a_{{1}}a_{{10}}-a_{{2}}a_{{7}}))}{4{\omega_1}^{-1}{\omega_2}^{3}({a_{{3}}}^{2}+{a_{{4}}}^{2})^{-1}({\omega_1}^2-{\omega_2}^2)} &\\
&+\frac{(a_{{12}}-a_{{14}})(a_{{1}}a_{{3}}a_{{10}}-a_{{1}}a_{{4}}a_{{11}}-a_{{2}}a_{{3}}a_{{7}}+a_{{2}}a_{{4}}a_{{8}})
+a_{{3}}a_{{13}}(a_{{1}}a_{{11}}-a_{{2}}a_{{8}})+a_{{4}}a_{{13}}(a_{{1}}a_{{10}} -a_{{2}}a_{{7}})}{8{\omega_2}^{2}{\omega_1}^{-1}({\omega_1}^2-{\omega_2}^2)}-\frac{3{a_{{4}}}^{2} ( 2{a_{{3}}}^{2}-{a_{{4}}}^{2} ) ( a_{{9}}+a_{{5}} )}{16{\omega_2}^{2}({\omega_1}^2-{\omega_2}^2)} &\\
&+\frac{ ( {a_{{1}}}^{2}+{a_{{2}}}^{2} )  ( 3{a_{{3}}}^{2}a_{{12}}+9{a_{{3}}}^{2}a_{{14}}+6a_{{13}}a_{{4}}a_{{3}}-3{a_{{4}}}^{2}a_{{12}}-{a_{{4}}}^{2}a_{{14}} )}{8{\omega_2}^{2}({\omega_1}^2-{\omega_2}^2)}-\frac{(3{a_{{3}}}^{2}a_{{12}}+9{a_{{3}}}^{2}a_{{14}}-6a_{{13}}a_{{4}}a_{{3}}+9{a_{{4}}}^{2}a_{{12}}
+3{a_{{4}}}^{2}a_{{14}})({a_{{1}}}^{2}+{a_{{2}}}^{2} )}{8{\omega_1}^{2}({\omega_1}^2-{\omega_2}^2)}&\\
&-\frac{( {a_{{3}}}^2-{a_{{4}}}^2 )( a_{{7}}a_{{8}}+a_{{10}}a_{{11}} ) + ( -{a_{{7}}}^{2}+{a_{{8}}}^{2}-{a_{{10}}}^{2}+{a_{{11}}}^{2} ) a_{{4}}a_{{3}}-a_{{13}}a_{{3}} ( a_{{1}}a_{{7}}+a_{{2}}a_{{10}} )+ ( 8a_{{3}}a_{{12}}+a_{{13}}a_{{4}} )  ( a_{{1}}a_{{8}}+a_{{2}}a_{{11}}) }{4{\omega_2}({\omega_1}^2-{\omega_2}^2)}&\\
&+\frac{2a_{{13}}( a_{{1}}a_{{3}}a_{{11}}+a_{{1}}a_{{4}}a_{{10}}-a_{{2}}a_{{3}}a_{{8}}-a_{{2}}a_{{4}}a_{{7}})-2a_{{4}} ( 5a_{{12}}+3a_{{14}} )  ( a_{{1}}a_{{11}}-a_{{2}}a_{{8}} )}{16{\omega_1}({\omega_1}^2-{\omega_2}^2)}-\frac{a_{{3}} ( 3a_{{12}}+5a_{{14}})( a_{{1}}a_{{10}}-a_{{2}}a_{{7}})}{8{\omega_1}({\omega_1}^2-{\omega_2}^2)}&\\
&+\frac{3(  ( -a_{{1}}a_{{11}}+a_{{2}}a_{{8}} ) a_{{3}}+a_{{4}} ( a_{{1}}a_{{10}}-a_{{2}}a_{{7}}))
({a_{{3}}}^{2}+{a_{{4}}}^{2})}{4{\omega_1}{\omega_2}({\omega_1}^2-{\omega_2}^2)}.&
\end{eqnarray*}
\end{footnotesize}

\end{document}